\newcommand{\supp}{\operatorname{supp}}
\newcommand{\Z}{\mathbb{Z}}
\newcommand{\C}{\text{Com}(\Gamma)}
\newcommand{\X}{\Gamma\backslash G}
\newcommand{\Q}{\mathbb{Q}}
\newcommand{\R}{\mathbb{R}}
\newcommand{\stab}{\text{stab}}
\newcommand{\y}{\widetilde{y}}
\newcommand{\z}{\widetilde{z}}
\begin{document}

\newtheorem{thm}{Theorem}[section]
\newtheorem{conj}[thm]{Conjecture}
\newtheorem{cor}[thm]{Corollary}
\newtheorem{lem}[thm]{Lemma}
\newtheorem{prop}[thm]{Proposition}
\newtheorem{exa}[thm]{Example}
\newtheorem{defi}[thm]{Definition}
\newtheorem{rem}[thm]{Remark}

\numberwithin{equation}{section}

\title[M\"{o}bius disjointness for homogeneous dynamics]{M\"{o}bius disjointness for homogeneous dynamics} 
\author{Ryan Peckner}
\address{Mathematics Department, Princeton University, Princeton, NJ 08544}
\email{rpeckner@princeton.edu}

\maketitle

\begin{abstract}
We prove Sarnak's M\"{o}bius disjointness conjecture for all unipotent translations on homogeneous spaces of real connected Lie groups. Namely, we show that if $G$ is any such group, $\Gamma\subset G$ a lattice, and $u\in G$ an Ad-unipotent element, then for every $x\in\X$ and every continuous, bounded function $f$ on $\X$, the sequence $f(xu^{n})$ cannot correlate with the M\"{o}bius function on average.
\end{abstract}

\section{Introduction}

One of the most important objects in analytic number theory is the M\"obius function, defined for positive integers $n$ by
\[ \mu(n) = \left\{
\begin{array}{cl}
1 &\ \text{if }n=1 \\
0 &\ \text{if }n\text{ is not squarefree}\\
(-1)^{r} &\ \text{if }n=p_{1}\cdots p_{r}\text{ is a product of $r$ distinct primes}.
\end{array}\right. 
\]
The overall behavior of the M\"{o}bius function is captured quantitatively by the summatory function
\[
S(N) := \sum_{n\leq N}\mu(n)
\]
as $N\to\infty$. Sign fluctuations in $\mu(n)$, resulting from randomness in the distribution of prime numbers, create cancellations in $S(N)$ that reflect specific number-theoretic phenomena. For instance, the prime number theorem is elementarily equivalent to the estimate $S(N) = o(N)$, while the Riemann hypothesis is equivalent to the power saving $S(N) = O_{\epsilon}(N^{1/2 + \epsilon})$ for all $\epsilon > 0$. 

Sarnak has formulated a less quantitative description of M\"{o}bius randomness from this standpoint that seeks to measure correlations of $\mu(n)$ with simpler functions, rather than its  intrinsic cancellations \cite{S}. This simultaneously gives a rigorous grounding to the well-known ``M\"{o}bius randomness heuristic" \cite{IK}, which states that the sum
\[
\sum_{n\leq N}\mu(n)a_{n}
\]
should be asymptotically small whenever $a_{n}$ is a ``reasonable" sequence of complex numbers, and generalizes more precise results along these lines. These include Vinogradov's estimate for exponential sums
\[
\sum_{n\leq N}\mu(n)\exp(2\pi i\alpha n) \ll_{A} \frac{N}{\log^{A}N}\text{ for all }A>0,
\]
and the generalization thereof to nilmanifolds due to Green-Tao \cite{GT}. 

Sarnak's conjecture on the randomness of the M\"{o}bius function is set in the context of deterministic topological dynamical systems. It asserts that if $X$ is a compact metric space and $T: X\to X$ a continuous surjection such that the topological entropy of $(X,T)$ is zero, then for every $x\in X$ and every continuous function $f\in C(X)$, we have
\[
\frac{1}{N}\sum_{n=0}^{N-1}\mu(n)f(T^{n}x)\to 0\text{ as }N\to\infty.
\]
Thus, the notion of a reasonable sequence of complex numbers in this setting is that of an observable sequence from a topological dynamical system of zero entropy.

This statement deliberately avoids demanding a rate of convergence, as these are currently available only for relatively simple systems. The fundamental nuance in this formulation is that $x$ may be \emph{any} point whatsoever in the space $X$. If we relax the convergence to merely hold almost everywhere with respect to some $T$-invariant measure on $X$, the result follows from the spectral theory of the Koopman operator for the system and Bourgain's version of the ergodic theorem \cite{B2}. 

This conjecture has been verified for certain rank-one transformations \cite{B1, ALR}, skew-products on tori \cite{LS}, Kronecker systems and rotations on nilmanifolds \cite{Dav, GT}, and horocycle flows on surfaces of constant negative curvature \cite{BSZ}, among other cases. The latter two examples, although quite different in nature, are conceptually of the same ilk. Both are homogeneous dynamical systems of zero entropy, meaning that there is a Lie group $G$ (a nilpotent group for the nilmanifold case, and $SL_{2}(\mathbb{R})$ for horocycle flows) and a lattice $\Gamma\subset G$ such that the dynamics of the system are given by a zero-entropy homeomorphism of the homogeneous space $\Gamma\backslash G$. 

The most basic class of such zero-entropy homeomorphisms is obtained by allowing an Ad-unipotent element $u\in G$ to act on $\Gamma\backslash G$ by right translation. This yields a wide range of dynamical behavior, from a distal system when $G$ is nilpotent to one that is much more complex when $G$ is semisimple (such as for horocycle flows), being mixing of all orders \cite{Sta}. The profound results of Ratner on the topological and measure rigidity of such unipotent translations \cite{R1, R2, R3, R4} render them a fertile testing ground for many phenomena in dynamics, and indeed her results are essential for proving the disjointness of $\mu(n)$ from horocycle flows \cite{BSZ}.

In this paper, we use the full generality of Ratner's theorems to prove the disjointness of the M\"{o}bius function from all unipotent translations on homogeneous spaces of connected Lie groups.
\begin{thm}
Let $G$ be a real connected Lie group, $\Gamma\subset G$ a lattice, and $u\in G$ an Ad-unipotent element. Then for every $x\in \Gamma\backslash G$ and every continuous, bounded function $f$ on $\Gamma\backslash G$, we have
\[
\frac{1}{N}\sum_{n\leq N}\mu(n)f(xu^{n})\to 0 \text{ as }N\to\infty.
\]
\label{thm: main}
\end{thm}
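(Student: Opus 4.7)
The plan is to apply the Bourgain--Sarnak--Ziegler (BSZ) orthogonality criterion, which reduces the theorem to the following claim: for any $\epsilon > 0$, the averaged correlations
\[
\frac{1}{\pi(P)^2}\sum_{\substack{p,q\leq P\\ p\neq q}}\limsup_{N\to\infty}\left|\frac{1}{N}\sum_{n\leq N}f(xu^{pn})\overline{f(xu^{qn})}\right|
\]
tend to $0$ as $P\to\infty$. By Ratner's equidistribution theorem applied to the action of $\{u^n\}_{n\in\Z}$ on $\X$ (embedding $u$ in the one-parameter subgroup $u^t = \exp(tX)$, valid since $u$ is Ad-unipotent), the orbit of $x$ equidistributes on some closed homogeneous set $xH$ with respect to the invariant probability measure $\mu_H$. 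Writing $c = \int_{xH} f\,d\mu_H$ and decomposing $f = (f-c) + c$, the constant part contributes $c\cdot\frac{1}{N}\sum_{n\leq N}\mu(n)\to 0$ by the prime number theorem, so one may assume $\int_{xH} f\,d\mu_H = 0$.

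\textbf{Joint equidistribution via Ratner.} For each pair of distinct primes $(p,q)$, consider the one-parameter unipotent subgroup $V_{p,q} = \{(u^{pt},u^{qt}):t\in\R\}$ of $G\times G$ acting on $\X\times\X$. Applying Ratner's theorem at $(x,x)$ produces a closed subgroup $L_{p,q}\subset G\times G$ with $V_{p,q}\subset L_{p,q}\subset H\times H$, a closed orbit $(x,x)L_{p,q}$, and an invariant probability measure $\nu_{p,q}$ on this orbit, such that
\[
\frac{1}{N}\sum_{n\leq N}f(xu^{pn})\overline{f(xu^{qn})} \ \longrightarrow\ \int f(y_{1})\overline{f(y_{2})}\,d\nu_{p,q}(y_{1},y_{2}).
\]
If $L_{p,q}=H\times H$, Fubini gives that this integral equals $\bigl|\int f\,d\mu_H\bigr|^{2}=0$, and the BSZ hypothesis is satisfied for that pair.

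\textbf{Joining classification and main obstacle.} The core task is to show that the set of \emph{bad} pairs --- those with $L_{p,q}\subsetneq H\times H$ --- is sparse enough to render the BSZ average negligible. Each proper closed $L\subsetneq H\times H$ containing some $V_{p,q}$ imposes a one-dimensional linear constraint on $(p,q)$ at the level of Lie algebras, restricting the pair to a sublattice of $\Z^{2}$; for a single such $L$, at most $O(P/\log P)$ prime pairs with coordinates $\leq P$ satisfy this constraint, negligible next to $\pi(P)^{2}$. Ratner's classification yields only countably many candidate $L$, but the essential obstacle is to aggregate over all of them: either via a uniform structural result (e.g.\ that every proper $L$ arises as the graph of an algebraic isomorphism between subgroups of $H$), or via a reduction to simpler cases. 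A natural route proceeds by the Levi decomposition of $H$ together with induction on dimension: the nilpotent case is already handled by the Green--Tao theorem on M\"obius disjointness from nilsequences, while the semisimple case extends the Bourgain--Sarnak--Ziegler argument for horocycle flows via the full strength of Ratner's measure-rigidity theorem. The chief subtlety is that $x$ is arbitrary and may be a nongeneric (possibly periodic) point, so the argument must hold for every starting point uniformly; this is precisely where the qualitative but universal nature of Ratner's theorem, as opposed to merely effective equidistribution on generic orbits, becomes indispensable.
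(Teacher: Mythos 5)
Your outline correctly identifies the architecture of the proof: the BSZ orthogonality criterion, Ratner's equidistribution applied to $(u^{p},u^{q})$ on $\X\times\X$, the reduction to joinings classification, and the idea of proceeding by a Levi decomposition $H=L\rtimes R$ with Green--Tao handling nilpotent pieces. But the two places you flag as ``the core task'' and ``the essential obstacle'' are precisely the places where the argument is missing, and they do not succumb to the device you propose.

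First, the claim that a proper closed $L\subsetneq H\times H$ containing $V_{p,q}$ ``imposes a one-dimensional linear constraint on $(p,q)$ at the level of Lie algebras, restricting the pair to a sublattice of $\Z^{2}$'' is not what Ratner's joinings theorem yields. In the semisimple case the theorem produces an element $\beta$ of the commensurator $\C$ with $\beta u\beta^{-1}=u^{p/q}$, i.e.\ a constraint that $p/q$ lies in the image of the character $\chi:\C\cap N_{G}(U)^{\xi}\to\R^{*}_{>0}$. Showing this image meets $\{p/q: p,q\text{ prime}\}$ only finitely often is the actual content, and the paper's proof goes through a serious arithmetic-group argument: one distinguishes arithmetic from nonarithmetic $\Gamma$ (Margulis), passes to the Zariski closure of $G_{\Q}\cap N^{u}_{\xi}$ with its Levi decomposition $LTU$, reduces to showing $\chi$ is trivial on the $\Q$-split part $S$ of $T$, and then invokes the Avramidi--Witte Morris theorem characterizing horospherical limit points in terms of $\Q$-split flats, combined with a genericity argument (Proposition 3.12) placing the endpoint of a one-parameter ray from $S$ among such limit points when the $u$-orbit of $x$ equidistributes. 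None of this appears in your sketch, and without it the sparseness of bad pairs is unproven. The aggregation over countably many $L$ is also not resolved by your suggestions; the paper sidesteps the aggregation problem entirely by proving \emph{finiteness} of the bad set for fixed $x$, a much stronger conclusion that the BSZ criterion can actually consume.

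Second, the general Levi case does not factor into ``nilpotent plus semisimple'' as cleanly as you suggest, because the joining $\lambda$ lives on $X_{H}\times X_{H}$ for $H=L\rtimes R$ and must be related simultaneously to the Levi quotient and the fiber. The paper's argument requires proving (Proposition 4.4) that the normal subgroups $\Lambda_{1},\Lambda_{2}$ of Ratner's joinings theorem each project onto $L$, so that both $(u_{L}^{p},u_{L}^{q})$ and $(u_{R}^{p},u_{R}^{q})$ stabilize $\lambda$; then disintegrating $\lambda$ over $X_{L}\times X_{L}$, using Ratner's Mautner-type result to make the fiber measures ergodic, and carrying out vertical Fourier analysis on the nilmanifold fibers $X_{R}\times X_{R}$ with a quantitative bound $|\int f\otimes\bar f\,\mathrm{d}\lambda|\le C_{f}/(pq)$ proved by induction on the nilpotency step of $R$. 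Merely invoking Green--Tao for $R$ and ``extending BSZ'' for $L$ would not produce this bound nor justify the induction. In short, your proposal correctly names the high-level ingredients and the right theorems to consult, but the two arguments that actually make the theorem true --- the horospherical-limit-point argument for $\Q$-split tori in the semisimple case, and the disintegration plus vertical Fourier analysis in the mixed case --- are absent.
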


This does not quite settle Sarnak's conjecture for all zero-entropy homogeneous dynamics, as we do not account for the more general class of quasi-unipotent affine automorphisms. These are maps on $\X$ of the form $T_{g}\circ\alpha$, where $\alpha$ is an automorphism of $G$ such that $\alpha(\Gamma) = \Gamma$, and $g\in G$ has the property that all eigenvalues of $\text{Ad}\,g\circ \,\textrm{d}\alpha$ are of absolute value 1 ($T_{g}$ denotes right translation by $g$ on $\X$). However, we strongly suspect that the ideas we present in proving the disjointness conjecture for unipotent translations should generalize in a fairly straightforward way to these systems. 

It should be noted also that while we only prove our result for real connected Lie groups, the statement and method of proof should be roughly similar for the case of $p$-adic Lie groups and their products, as Ratner rigidity continues to hold in this context \cite{R4}. 

One may view this result as the statement that the M\"{o}bius function is \emph{linearly} disjoint from all deterministic homogeneous dynamical systems of the stated kind. Extending this statement beyond a linear expression in $\mu(n)$ seems out of reach at present; even to produce significant cancellations in the second-order correlations sum
\[
\sum_{n\leq N}\mu(n)\mu(n+1)
\] 
presents an enormous challenge, and may be as difficult as the twin prime conjecture. The best known bound for this sum is the recent result of Matom\"{a}ki and Radziwill \cite{MR}: there exists some $\delta > 0$ such that 
\[
\frac{1}{N}\left|\sum_{n\leq N}\lambda(n)\lambda(n+1)\right| \leq 1 - \delta
\]
for all sufficiently large $N$, where $\lambda$ is the Liouville function $\lambda(n) = (-1)^{\Omega(n)}$, with $\Omega(n)$ the number of prime divisors of $n$ counted with multiplicity. See also \cite{C} for related results. 

In this vein, Sarnak has shown that the M\"{o}bius disjointness conjecture follows from the well-known Chowla conjecture \cite{T}, which states that if $m> 0$ and $a_{1},a_{2},\dots,a_{m}$ are nonnegative integers, at least one of which is odd, then
\[
\sum_{n\leq N}\mu(n+1)^{a_{1}}\mu(n+2)^{a_{2}}\cdots\mu(n+m)^{a_{m}} = o(N).
\]
An enriched version of the disjointness conjecture, allowing for correlations between such nonlinear expressions in $\mu$ and corresponding expressions in a deterministic sequence, is therefore unapproachable until the Chowla conjecture is known. 

It is our hope that our result may apply to the study of solutions of polynomial equations in prime numbers, as in \cite{GT}. However, such applications would require a stronger version of our theorem, in which the rate of vanishing of the correlated sum is quantified. As mentioned above, no such rates are known in the general setting of Ratner's theorems, though partial results have been obtained in this direction \cite{SU}. 

\subsection{Necessary background} We begin by recording several powerful results as the fundamental input for our work. First, the following criterion allows us to establish M\"{o}bius disjointness by studying correlations of powers of the $u$-action. 

\begin{thm}[The disjointness criterion, \cite{BSZ} Thm. 2]
 Let $F: \mathbb{N}\to\mathbb{C}$ with $|F|\leq 1$ and let $\nu$ be a multiplicative function with $|\nu|\leq 1$. There exists a $\tau_{0} > 0$ such that the following holds: let $\tau \leq \tau_{0}$ and assume that for all primes $p,q\leq e^{1/\tau}, p\neq q$, we have that for $M$ large enough
\begin{equation}
 \left|\sum_{m\leq M}F(pm)\overline{F(qm)}\right|\leq\tau M.
 \label{eq: small}
\end{equation}
Then for $N$ large enough,
\begin{equation}
 \left|\sum_{n\leq N}\nu(n)F(n)\right|\leq 2\sqrt{\tau\log(1/\tau)}N.
\end{equation}
\label{thm: criterion}
\end{thm}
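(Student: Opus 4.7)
This is the Bourgain-Sarnak-Ziegler quantitative version of K\'atai's lemma, and I follow that approach. The governing idea is to use the multiplicativity of $\nu$ to relate the linear sum $S:=\sum_{n\leq N}\nu(n)F(n)$ to an average of the bilinear expressions controlled by the hypothesis, and then close the loop with Cauchy-Schwarz.

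\emph{Step 1: multiplicativity and Tur\'an-Kubilius.} For a prime $p$ and integer $m$ coprime to $p$, $\nu(pm)=\nu(p)\nu(m)$. Setting $P:=e^{1/\tau}$ and summing over primes $p\leq P$ yields the identity
\[
\sum_{p\leq P}\nu(p)\sum_{\substack{m\leq N/p\\(m,p)=1}}\nu(m)F(pm) \;=\; \sum_{n\leq N}\nu(n)F(n)\,\omega_P(n),
\]
where $\omega_P(n):=\#\{p\leq P:p\|n\}$. Mertens' theorem gives $\mathbb{E}_{n\leq N}\,\omega_P(n)=\log(1/\tau)+O(1)$, and Tur\'an-Kubilius gives $\sum_{n\leq N}(\omega_P(n)-\log(1/\tau))^{2}\ll N\log(1/\tau)$. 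Cauchy-Schwarz in $n$ therefore lets us replace $\omega_P(n)$ by the constant $\log(1/\tau)$ at the cost of an acceptable mean-square error.

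\emph{Step 2: Cauchy-Schwarz and the bilinear hypothesis.} Writing $B_p:=\sum_{m\leq N/p,\,(m,p)=1}\nu(m)F(pm)$, step 1 gives $\log(1/\tau)\cdot|S|\lesssim\bigl|\sum_{p\leq P}\nu(p)B_p\bigr|+\text{error}$. Interchanging sums and applying Cauchy-Schwarz in $m$,
\[
\Bigl|\sum_{p\leq P}\nu(p)B_p\Bigr|^{2}\;\leq\;N\sum_{m}\Bigl|\sum_{\substack{p\leq P\\ pm\leq N,\,(p,m)=1}}\nu(p)F(pm)\Bigr|^{2}.
\]
Upon expanding the inner square, the diagonal $p=q$ contributes $\sum_{p\leq P}N/p\ll N\log(1/\tau)$, while the off-diagonal produces bilinear forms of the shape $\sum_{m\leq N/\max(p,q)}F(pm)\overline{F(qm)}$ for distinct primes $p,q\leq P$, each bounded by $\tau N/\max(p,q)$ by the hypothesis \eqref{eq: small}. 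Summing these contributions, dividing by $(\log(1/\tau))^{2}$, and taking square roots should yield $|S|\leq 2\sqrt{\tau\log(1/\tau)}\,N$.

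\emph{Main obstacle.} The crux is organizing the Cauchy-Schwarz carefully enough to avoid catastrophic loss: a naive triangle-inequality bound on the off-diagonal prime sum picks up a prefactor as large as $\pi(P)\sim e^{1/\tau}\tau$, which would obliterate the estimate at the scale $P=e^{1/\tau}$. The BSZ trick is to arrange things so that only the much smaller prime sum $\sum_{p\ne q\leq P}1/\max(p,q)$ enters. A secondary book-keeping challenge is absorbing the Tur\'an-Kubilius concentration error and the coprimality corrections $(m,p)=1$ (together with the negligible contribution of $n$ with $p^{2}\mid n$, controlled by $\sum_{p}1/p^{2}<\infty$) within the same $O(\sqrt{\tau\log(1/\tau)})$ budget, so that the final constant is as sharp as $2$.
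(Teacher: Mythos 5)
Your sketch captures the correct \emph{framework} — this is indeed the K\'atai/Bourgain–Sarnak–Ziegler multiplicativity-plus-Cauchy–Schwarz argument — but the central technical device is missing, and without it the estimate as organized does not close. The critical issue is the range over which you apply Cauchy–Schwarz.

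In Step 2 you apply Cauchy–Schwarz in $m$ with the trivial bound $\sum_{m\leq N}|\nu(m)|^2\leq N$. With this factor of $N$ in front, the resulting inequality cannot reach the target $2\sqrt{\tau\log(1/\tau)}\,N$, for two independent reasons. First, the diagonal $p=q$ already kills you: it contributes $\sum_{p\leq P}N/p\asymp N\log(1/\tau)$, so after dividing by $(\log(1/\tau))^2$ and taking the square root you only get $|S|\lesssim N/\sqrt{\log(1/\tau)}$, which is far larger than $\sqrt{\tau\log(1/\tau)}\,N$. Second, the off-diagonal prime sum you describe as the ``much smaller'' one is not small: by the prime number theorem, $\sum_{p\neq q\leq P}1/\max(p,q)=2\sum_{q\leq P}(\pi(q)-1)/q\asymp\int_2^{P}\mathrm{d}t/(\log t)^2\asymp P/(\log P)^2=\tau^2 e^{1/\tau}$ for $P=e^{1/\tau}$. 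It is smaller than $\pi(P)\asymp\tau e^{1/\tau}$ by only a factor $\tau$, so it is still exponentially large in $1/\tau$ and swamps the $\tau$ gain from the bilinear hypothesis. Your closing sentence (``Summing these contributions, dividing by $(\log(1/\tau))^2$, and taking square roots should yield\ldots'') thus does not go through, and the ``main obstacle'' paragraph misidentifies what actually goes wrong.

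The missing ingredient in the BSZ proof is a \emph{restriction of the prime range}: one works with primes in a block $\mathcal{P}\subset(P_0,P_1]$ (a single dyadic block, or a union chosen by pigeonhole) rather than all primes up to $P$ at once. Then, since every $p\in\mathcal{P}$ satisfies $p>P_0$, the Cauchy–Schwarz in $m$ only runs over $m\leq N/P_0$ and produces the factor $N/P_0$ rather than $N$. This buys the crucial saving: the diagonal contribution becomes $(N/P_0)\sum_{p\in\mathcal{P}}N/p$ and the off-diagonal one $(N/P_0)\cdot\tau N\sum_{p\neq q\in\mathcal{P}}1/\max(p,q)$, and with $P_0$ chosen as a suitable power of $e^{1/\tau}$ both are of size $O(\tau\log(1/\tau))\cdot N^2$ as required. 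The Tur\'an–Kubilius step has to be applied per block (and a separate accounting is needed for the set of $n$ with no prime factor in $\mathcal{P}$, whose density is controlled by Mertens), rather than once with all primes up to $e^{1/\tau}$, since the raw replacement of $\omega_P(n)$ by its mean over the full range $[2,P]$ also produces an $\ell^1$ error $\asymp N\sqrt{\log(1/\tau)}$ that is too large relative to the main term $N\log(1/\tau)$. In short: the identity and the Cauchy–Schwarz you set up are correct, but the way you bracket the primes makes the estimate lossy by an exponential factor; you need the dyadic blocking to localize both $p$ and $m$.

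Finally, note that the paper you are working from does not prove this theorem: it is imported verbatim as Theorem 2 of Bourgain–Sarnak–Ziegler and used as a black box, so there is no in-paper proof to compare against.
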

In fact, the conclusion of this statement can be drawn even when the inequality (\ref{eq: small}) is violated on a small set of primes, whose size may increase as $\tau\to 0$. We will have more to say on this later.

Thus, our aim is to analyze the correlation limits
\begin{equation}
 \lim_{N\to\infty}\frac{1}{N}\sum_{n=1}^{N}f(xu^{\lambda n})f(xu^{\mu n}) 
\label{eq: corr}
\end{equation}
where, in the notation of Theorem \ref{thm: main}, $x\in X = \X, f\in C_{b}(X)$, and $\lambda, \mu\in\mathbb{N}$. Ratner's uniform distribution theorem allows us to describe such limits in terms of ergodic measures arising naturally from subgroups of $G$. If $x\in X$, we say that the closure $\overline{\{xu^{k} : k\in\Z\}}$ of the $u$-orbit of $x$ is homogeneous if there is a closed subgroup $H\subset G$ such that $u\in H, \widetilde{x}H\widetilde{x}^{-1}\cap \Gamma$ is a lattice in $\widetilde{x}H\widetilde{x}^{-1}$, where $\Gamma\widetilde{x} = x$, and $\overline{\{xu^{k} : k\in\Z\}} = xH$. 

\begin{thm}[Uniform distribution of unipotent trajectories - \cite{R2} Theorem B and Corollary A (1)]
 Let $G$ be a connected Lie group, $\Gamma\subset G$ a lattice, and $u\in G$ an Ad-unipotent element. Then every orbit of $u$ acting on $\X$ is homogeneous and uniformly distributed in its closure. More precisely, for every $x\in X$, there is a closed subgroup $H\subset G$ such that $\overline{\{xu^{k} : k\in\Z\}}$ is homogeneous with respect to $H$ as above, and we have
\[
 \lim_{N\to\infty}\frac{1}{N}\sum_{n=0}^{N-1}f(xu^{n}) = \int_{X}f\,\mathrm{d}\nu_{H}
\]
for any bounded continuous function $f$ on $X$, where $\nu_{H}$ is an $H$-invariant Borel probability measure supported on $xH$.

Moreover, if $H$ is the smallest closed subgroup of $G$ with respect to which the closure of the $u$-orbit of $x$ is homogeneous, then the action of $u$ on $(xH,\nu_{H})$ is ergodic.
\label{thm: equidistribution}
\end{thm}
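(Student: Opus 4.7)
The plan is to deduce the statement from two pillars of Ratner's work: her measure classification theorem, which asserts that every ergodic $u$-invariant Borel probability measure on $\X$ is algebraic (the $H$-invariant probability measure on a single closed $H$-orbit for some closed subgroup $H\ni u$), together with the non-divergence theorem of Dani--Margulis for unipotent orbits. Granting these, the uniform distribution assertion follows from a soft weak-$\ast$ compactness argument.

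First I would form the empirical measures $\nu_{N}:=\frac{1}{N}\sum_{n=0}^{N-1}\delta_{xu^{n}}$ and invoke Dani--Margulis non-divergence to see that the family $\{\nu_{N}\}$ is tight on $\X$; consequently every subsequence has a weak-$\ast$ convergent subsubsequence whose limit $\nu$ is a $u$-invariant Borel probability measure supported in the orbit closure $\overline{\{xu^{k}:k\in\Z\}}$. The goal is then to show that all such limit measures coincide.

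By the ergodic decomposition combined with measure classification, any such $\nu$ is a convex combination of algebraic measures, each supported on a closed $H'$-orbit for some closed subgroup $H'\ni u$. The linearization technique of Dani--Margulis then rules out any nontrivial decomposition: if a positive-mass ergodic component were carried by a proper homogeneous subspace of the hypothesized orbit closure, the polynomial character of the unipotent trajectory would force it to spend a positive proportion of time in a tube around that subvariety, trapping the whole orbit closure inside it and contradicting maximality. Hence $\nu$ is itself algebraic, supported on a single closed orbit $xH$ with $H$ the smallest closed subgroup with respect to which the orbit closure is homogeneous. Ergodicity of $u$ on $(xH,\nu_{H})$ then follows from this minimality, since a nontrivial ergodic decomposition of $\nu_{H}$ would, by measure classification, express $\nu_{H}$ as an average of algebraic measures supported on proper sub-orbits, forcing the $u$-orbit of a generic point to lie in a strictly smaller homogeneous subspace and contradicting the choice of $H$. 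Since the data $(H,\nu_{H})$ are determined intrinsically by $x$, every subsequential limit equals $\nu_{H}$, so $\nu_{N}\to\nu_{H}$ weakly and the equidistribution formula follows.

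The main obstacle is of course measure classification itself. Its proof occupies the bulk of \cite{R1, R2} and rests on Ratner's delicate analysis of the polynomial divergence of pairs of nearby unipotent trajectories (the so-called $R$-property), together with entropy and cocycle arguments that cannot be sketched in a paragraph; the strategy above only reduces the equidistribution statement to that rigidity input in the presence of non-divergence.
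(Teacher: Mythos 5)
This statement is cited in the paper (attributed to \cite{R2}, Theorem B and Corollary A) rather than proved; there is no proof of the paper's own to compare against. Your sketch is nonetheless a correct high-level description of how the equidistribution theorem is derived in the literature: Dani--Margulis non-divergence gives tightness of the empirical measures $\nu_{N}$, any weak-$\ast$ limit is a $u$-invariant probability measure supported in the orbit closure, measure classification together with a linearization (avoidance-of-singular-sets) argument forces every such limit to be a single algebraic measure $\nu_{H}$, and the subsequential limits therefore all agree. You correctly identify measure classification as the deep input that cannot be compressed. Two caveats are worth flagging. First, Ratner's own proof of Theorem B in \cite{R2} does not literally invoke the Dani--Margulis linearization machinery but rather her own countability-of-singular-tubes argument, which is technically different though morally equivalent; your sketch is closer to the now-standard Dani--Margulis--Mozes--Shah formulation. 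Second, your argument for the ergodicity clause is too quick as stated: the hypothesis that $H$ is the \emph{smallest} closed subgroup realizing the orbit closure as a homogeneous set must be used more carefully, since an ergodic decomposition $\nu_{H}=\int\mu_{\omega}\,\mathrm{d}P(\omega)$ a priori expresses $\nu_{H}$ as an average of algebraic measures on sub-orbits without immediately contradicting that the orbit closure is $xH$; one needs the equidistribution statement applied to a $\mu_{\omega}$-generic point $y\in xH$, together with uniqueness of the $H$-invariant probability measure on $xH$, to derive a contradiction with minimality of $H$. These gaps are expected at sketch level and do not affect the overall soundness of the strategy.
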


A probability measure of the form appearing in the above theorem is called \emph{algebraic}. Applying this theorem to the action of the unipotent element $(u^{\lambda},u^{\mu})$ in the group $G\times G$, we have that the limit (\ref{eq: corr}) exists for any $\lambda,\mu> 0$, and is given by 
\begin{equation}
  \lim_{N\to\infty}\frac{1}{N}\sum_{n=1}^{N}f(xu^{\lambda n})f(xu^{\mu n}) = \int_{\X\times\X}(f\otimes f)(x,y)\,\textrm{d}\nu(x,y) 
\label{eq: limit}
\end{equation}
where $\nu$ is an algebraic ergodic probability measure on the system $(X\times X,u^{\lambda}\times u^{\mu})$. 

%That is, there exists a closed subgroup $C\subset G\times G$ with $(u^{\lambda},u^{\mu})\in C$, such that $\widetilde{(x,x)}(\Gamma\times \Gamma) \widetilde{(x,x)}^{-1}\cap C$ is a lattice in $C$ and $\nu$ is an ergodic $C$-invariant Borel probability measure supported on the closed orbit $(x,x)H$. Here, $\widetilde{(x,x)}\in G\times G$ is a representative of the point $(x,x)\in X\times X$.

Our proof proceeds by considering the various possibilities for the subgroup $H$ appearing in Theorem \ref{thm: equidistribution}, relative to which the $u$-orbit of the point $x$ is homogeneous. In section \ref{sec: semisimple}, we consider the case in which $H$ is semisimple, where we use geometric arguments to show that the correlation limit measure appearing in (\ref{eq: limit}) must be the trivial joining of the Haar measure of $H$ with itself, for all but finitely many primes $p\neq q$. In section \ref{sec: levi}, we consider the case that $H$ is any connected Lie group satisfying the conditions of Theorem \ref{thm: equidistribution}, and use the Levi decomposition to combine our work in the semisimple case with the known case of M\"{o}bius disjointness for nilmanifolds. 

We begin by outlining our method in the case $G = SL_{2}(\mathbb{R})$. While this case has already been addressed in \cite{BSZ}, we provide this concrete example in order to illustrate our general approach in the case of semisimple groups. 

\section{The case $G = SL_{2}(\mathbb{R})$}\label{sec: sl2} 

Suppose that $\Gamma\subset SL_{2}(\mathbb{R})$ is an arithmetic lattice, and let $x\in \X$. By a theorem of Dani, the closure of the $u$-orbit of $x$ is one of a finite set, a circle, or all of $\X$. In the first two cases, the disjointness of $\mu(n)$ from the $u$ orbit of $x$ is well-known to be true (by Dirichlet's theorem on primes in arithmetic progressions in the first case, and by a theorem of Vinogradov in the second \cite{Dav}).

Thus, we suppose that the $u$ orbit of $x$ is uniformly distributed with respect to the Haar measure $\nu_{G}$. Then we have by Theorem \ref{thm: equidistribution} 
\[
\lim_{N\to\infty}\frac{1}{N}\sum_{n\leq N}f(xu^{pn})\overline{f(xu^{qn})} = \int_{\X\times\X}f\otimes\overline{f}\,\textrm{d}\lambda,
\]
where $\lambda$ is an ergodic joining of $(\X,u^{p},\nu_{G})$ with $(\X,u^{q},\nu_{G})$. We are clearly finished if $\lambda = \nu_{G}\times\nu_{G}$, since by the prime number theorem, the statement of M\"{o}bius disjointness is unaffected if we subtract $\int_{\X}f\,\textrm{d}\nu_{G}$ from $f$.

Thus, suppose that there exists a nontrivial joining of $(\X,u^{p},\nu_{G})$ with $(\X,u^{q},\nu_{G})$. By Ratner's joinings theorem (\cite{R1}, or see Theorem \ref{thm: joinings} below), there exists some $\beta\in\C$ such that
\[
\beta u\beta^{-1} = u^{p/q},
\]
and
\[
\widetilde{x}\beta\widetilde{x}^{-1}\in\C,
\]
where $\Gamma\widetilde{x} = x$. It follows that $\widetilde{x}\beta\widetilde{x}^{-1}$ belongs to $P\cap\C$, where $P$ is the parabolic subgroup $N_{G}(\widetilde{x}U\widetilde{x}^{-1})$ ($U$ being the one-parameter unipotent subgroup generated by $u$). The fact that the $u$-orbit of $x$ equidistributes with respect to $\nu_{G}$ implies that, on realizing $P$ as the stabilizer $\stab_{G}(z)$ of a point $z\in\mathbb{P}^{1}(\R)$, we have $z\not\in\mathbb{P}^{1}(\Q)$. Hence, we will be finished if we can show the following.

\begin{prop}
 Let $P\subset G$ be a proper parabolic subgroup, so that $P = P_{z}$ is the stabilizer of some $z\in\mathbb{P}^{1}(\R)$, and suppose that $z\not\in\mathbb{P}^{1}(\Q)$. Let $\chi: P\cap\C\to\mathbb{R}^{*}_{>0}$ be a real character. Then $\chi(P\cap\C)\cap\Q^{*} = \{1\}$.
\end{prop}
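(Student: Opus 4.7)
The plan is to combine the $\Q$-rational structure on $\C$ coming from arithmeticity of $\Gamma$ with the norm-$1$ condition on eigenvalues of elements of $P\cap\C$ to force any value in $\chi(P\cap\C)\cap\Q^{*}$ to equal $1$.

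First, I would identify the character. Since $P=P_{z}$ is a Borel subgroup of $G=SL_{2}(\R)$, its abelianization is one-dimensional: $P/[P,P]\cong\R^{*}$, via the map sending $g$ to its eigenvalue $\lambda(g)$ on the fixed line $L_{z}\subset\R^{2}$ (the other eigenvalue is $\lambda(g)^{-1}$ since $\det g=1$). Any real character of $P$ factors through this quotient as $\chi(g)=\lambda(g)^{k}$ for some integer $k$. The particular $\chi$ relevant to the application is $\chi(g)=\lambda(g)^{2}$, encoding the conjugation $\beta u\beta^{-1}=u^{p/q}$; the case $k=0$ is trivial, so I assume $k\neq 0$.

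Next, I would exploit the $\Q$-structure on $\C$. For $\Gamma$ commensurable with $SL_{2}(\Z)$ one has $\C\subseteq SL_{2}(\Q)$ up to harmless scalars, while for cocompact $\Gamma$ arising from a quaternion division algebra $B/\Q$, $\C$ is the norm-$1$ group of $B(\Q)$. In both cases, each $g\in P_{z}\cap\C$ has $\Q$-rational reduced characteristic polynomial of degree $2$. Rewriting the fixed-point equation $g\cdot z=z$ in the form $cz^{2}+(d-a)z-b=0$ over $\Q$, the hypothesis $z\notin\mathbb{P}^{1}(\Q)$ yields two sub-cases: either the polynomial degenerates to the zero polynomial, forcing $g=\pm I$ (and $\chi(g)=1$ trivially), or $z\in\Q(\sqrt{D})\setminus\Q$ for some non-square $D\in\Q_{>0}$, and then $\lambda(g)\in\Q(\sqrt{D})$ satisfies the norm relation $\lambda(g)\cdot\overline{\lambda(g)}=1$, where $\overline{\cdot}$ denotes Galois conjugation of $\Q(\sqrt{D})/\Q$.

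To close the argument, I would apply a Galois-conjugation trick. Suppose $\chi(g)=\lambda(g)^{k}=r\in\Q^{*}$ with $k\neq 0$. Applying $\overline{\cdot}$ and using that $r$ is Galois-invariant gives $\overline{\lambda(g)}^{k}=r$; multiplying yields $(\lambda(g)\cdot\overline{\lambda(g)})^{k}=r^{2}$, and the norm relation forces $r^{2}=1$. Hence $r=\pm 1$, and positivity of $\chi$ gives $r=1$, as desired. The step I expect to be hardest is pinning down that ``real character'' has enough algebraic content to reduce to $\chi(g)=\lambda(g)^{k}$ for an integer $k$, rather than, say, $|\lambda(g)|^{s}$ for a transcendental $s$, which in principle could sidestep the Galois argument; for the specific $\chi$ arising in the $SL_{2}(\R)$ application this is immediate, and more generally Gelfond--Schneider rules out transcendental exponents before the norm argument is invoked.
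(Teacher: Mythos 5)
Your treatment of the noncompact case is a genuinely different (and more elementary) route than the paper's: where the paper passes to the Zariski closure of $P\cap SL_2(\Q)$, invokes $\Q$-anisotropy, and then argues geometrically that $P$ stabilizing an irrational boundary point cannot contain a $\Q$-split torus, you instead read off the fixed-point equation $cz^{2}+(d-a)z-b=0$, deduce that $z$ lies in a real quadratic field, observe $\overline{\lambda(g)}=\lambda(g)^{-1}$, and close by Galois conjugation. That part of your argument is essentially sound, modulo one subtlety you dismiss too quickly: for $\Gamma$ commensurable with $SL_2(\Z)$ the commensurator is $\{(\det A)^{-1/2}A : A\in GL_2(\Q)\}$, so the eigenvalue $\lambda(g)$ of $g=(\det A)^{-1/2}A$ need not lie in $\Q(z)$ (the scalar $(\det A)^{-1/2}$ can generate a second quadratic field). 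The fix is to work with $\lambda(g)^{2}=\mu/\overline{\mu}\in\Q(z)$, where $\mu$ is the eigenvalue of $A$; the norm-$1$ argument then goes through with $r^{4}=1$ in place of $r^{2}=1$.

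However, there is a genuine gap in the cocompact case. You treat it as if the lattice comes from a quaternion division algebra over $\Q$, but in general $\Gamma$ arises from a quaternion algebra $D$ over a totally real number field $K$ with $[K:\Q]>1$, realized via Weil restriction $\text{Res}_{K/\Q}(D_1)$. For such $\Gamma$, elements of $P\cap\C$ are represented by matrices in $SL_m(\R)$ for $m\geq 4$ with entries whose characteristic polynomial has degree $2[K:\Q]$, not $2$; the eigenvalue $\lambda(g)$ lives in a number field of degree larger than $2$, the simple $\Q(\sqrt{D})$ Galois-conjugation collapse is unavailable, and the norm relation $\lambda\overline{\lambda}=1$ does not hold in the form you need. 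This is precisely why the paper passes through a maximal $\Q$-torus $S$ in the Zariski closure $H$ of $P\cap G_\Q$ and uses the norm map $N_{F/\Q}$ from the splitting field $F$ of $S$ together with $\Q$-anisotropy — that is the replacement for your degree-$2$ Galois step, and it is not a cosmetic difference.

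A second, smaller gap is your justification that $\chi$ has the form $\lambda(g)^{k}$ with integer $k$. You correctly flag the worry that a ``real character'' could be $|\lambda(g)|^{s}$ for irrational $s$, but Gelfond--Schneider does not close it: that theorem requires the \emph{exponent} to be algebraic and irrational (with algebraic base), and says nothing when $s$ is transcendental — indeed $2^{\log 3/\log 2}=3$ shows a transcendental exponent can return a rational value. For arbitrary real characters the proposition is delicate; what saves both your argument and the paper's is that the character actually used downstream is the algebraic one defined by $\gamma u\gamma^{-1}=u^{\chi(\gamma)}$, which is an eigenvalue of $\mathrm{Ad}(\gamma)$ and hence has integer exponent in the sense you need. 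You should either restrict to that $\chi$ explicitly, or note that the proposition is only invoked for this algebraic character, rather than appealing to Gelfond--Schneider.
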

Letting $\chi$ be the real character of $N_{G}(U)$ defined by
\[
\gamma u \gamma^{-1} = u^{\chi(\gamma)},
\]
which then determines a character of any conjugate of $P$, it follows that in the setting above, we must in fact have $p=q$, i.e. there are no nontrivial joinings of $(\X,u^{p},\nu_{G})$ with $(\X,u^{q},\nu_{G})$ if $p\neq q$. Theorem \ref{thm: criterion} then implies Theorem \ref{thm: main}.

We divide into cases according to whether $\Gamma$ is a uniform lattice in $G$. 

\subsection{$\Gamma\backslash G$ is compact} In this case, $\Gamma$ is commensurable with a unit group in a quaternion division algebra $D$ defined over a totally real number field $K$ \cite{We}. Let $D$ be generated linearly over $K$ by $1,\omega, \Omega$, and $\omega\Omega$, where $\omega^{2}=a, \Omega^{2} =b$ with $a,b\in K$ and $a>0$ (this makes sense since $K$ is totally real). 
%We have the usual $2\times 2$ representation of $A$ 
% \begin{gather*}
%  \phi: A(\mathbb{Q})\to M_{2}(\mathbb{R}) \\
%\alpha = x_{0} + x_{1}\omega + x_{2}\Omega + x_{3}\omega\Omega\mapsto 
% \begin{pmatrix}
% \bar{\xi} & \eta \\
% b\overline{\eta} & \xi  
% \end{pmatrix}
% \end{gather*}
% where $\xi = x_{0} - x_{1}\omega, \eta = x_{2} + x_{3}\omega$, and $\omega =\sqrt{a}\in\mathbb{R}$ since $a\in K$ and $K$ is totally real. $\Gamma$ is commensurable with $\phi(A_{1}(\mathbb{Z}))$ where
% \[
%  A_{1}(\mathbb{Z}) = \{\alpha\in A(\mathbb{Z}): N(\alpha) = 1\}.
% \]
% The commensurator of $\Gamma$ is then given by the $\mathbb{Q}$-points (\cite{PR} Prop. 4.6) 
% \[
%  \C = \left\{\frac{1}{N(\alpha)^{1/2}}\phi(\alpha) : \alpha\in A^{+}(\mathbb{Q})\right\}
% \]
% where $A^{+}(\mathbb{Q})$ consists of all $\alpha\in A(\mathbb{Q})$ with $N(\alpha) > 0$.

Consider the $4\times 4$ matrix representation
\[
 \psi: D\to M_{4}(\mathbb{R}) ,
 \]
obtained by mapping $\alpha = x_{0} + x_{1}\omega + x_{2}\Omega + x_{3}\omega\Omega$ to the matrix
\[
x_{0}\begin{pmatrix} 1 & 0 & 0 & 0 \\ 0 & 1 & 0 & 0 \\ 0 & 0 & 1 & 0 \\ 0 & 0 & 0 & 1\end{pmatrix} + x_{1}\begin{pmatrix} 0 & 1 & 0 & 0 \\ a & 0 & 0 & 0 \\ 0 & 0 & 0 & 1 \\ 0 & 0 & a & 0 \end{pmatrix} + x_{2}\begin{pmatrix} 0 & 0 & 1 & 0 \\ 0 & 0 & 0 & -1 \\ b & 0 & 0 & 0 \\ 0 & -b & 0 & 0 \end{pmatrix}
+ x_{3}\begin{pmatrix} 0 & 0 & 0 & 1 \\ 0 & 0 & -a & 0 \\ 0 & b & 0 & 0 \\ -ab & 0 & 0 & 0\end{pmatrix}.
\]
Let $D_{1}$ be the group of unit quaternions; this is identified with the group of $\mathbb{R}$ points of an algebraic $K$-subgroup of $SL_{4}$ under this representation. The group of rational points     $G:= \text{Res}_{K/\Q}(D_{1})$ is then a subgroup of $SL_{m}(\mathbb{R})$ for some $m\geq 4$. $\Gamma$ is isomorphic over $K$ to a lattice commensurable with $G_{\Z}$, and so we have a $K$-isomorphism
\[
 \C\cong \text{Com}(G_{\Z}). 
\]
Up to scalar multiples of the identity, $\text{Com}(G_{\Z})$ is simply $G_{\Q}$. 

Now, suppose that $P$ is a parabolic subgroup of $SL_{2}(\R)$ defined over $\R$, and identify this with a parabolic subgroup of $G$ (we will still use $P$ to denote this group). Let $H$ be the Zariski closure of $P\cap G_{\Q}$ inside $SL_{m}(\R)$ (i.e. the smallest zero locus of a set of polynomials with real coefficients containing $P\cap G_{\Q}$). Observe that $H$ is defined over $\Q$, since it consists of rational matrices. Let $\chi: H\to\R^{*}_{>0}$ be a character. If $P$ is conjugate to the group of unipotent upper triangular matrices in $SL_{2}(\R)$, then $\chi$ must be trivial, so we assume instead that $P$ is conjugate to the group of all upper triangular matrices. Consequently, if $g\in P$ is conjugate to an upper-triangular matrix with diagonal entries $\lambda, \mu = \lambda^{-1}$, then $\chi(g) = \chi(\text{diag}(\lambda,\mu))$.   

Since $H\subset P$, it follows from this that $\chi$ is determined by its behavior on a maximal $\Q$-algebraic torus $S$ in $H$. Since $\Gamma$ is a uniform lattice in $SL_{2}(\R)$, so too is $G_{\Z}$ in $G_{\R}$, and therefore $G$ is $\Q$-anisotropic by the theorem of Borel and Harish-Chandra. This implies that $G$ contains no nontrivial $\Q$-split tori, so this is of course true of $H$ as well. 

% 
% Let $f(x_{0}, x_{1}, x_{2}, x_{3}) = N(\alpha)$ be the norm form of $A$. Since $A$ is a division algebra, this form is anisotropic, and therefore the special orthogonal group $SO(f)\subset SL_{4}(\mathbb{R})$ is anisotropic as a $\mathbb{Q}$-group. At the same time, by a standard calculation we have $\psi(A_{1}(\mathbb{Q}))\subset SO(f)$ where $A_{1}(\mathbb{Q})$ consists of the norm-one elements in $A(\mathbb{Q})$. Therefore, $\psi(A_{1}(\mathbb{Q}))$ contains no $\Q$-split torus. 

Therefore, $S$ must be $\Q$-anisotropic, so the restriction $\chi|_{S}$ is defined over a number field $F$ with $d:= [F:\Q]>1$. Since $S_{\Q}$ consists of rational matrices, we have $\chi(S_{\Q})\subset F^{*}$. Thus, if we let $N_{F/\Q}:F^{*}\to\Q^{*}$ be the norm map, the composite $N_{F/\Q}\circ\chi:S_{\Q}\to\Q^{*}$ is well-defined. However, this composite is a $\Q$-character of $S_{\Q}$ (since it's fixed by the action $\chi^{\sigma}(g) = \sigma(\chi(\sigma^{-1}(g)))$ of $\text{Gal}(F/\Q)$), and therefore must be trivial (as otherwise it could be extended to a nontrivial $\Q$-character of the $\Q$-anisotropic torus $S$).

It follows that if $\chi(s)\in\Q^{*}$ for some $s\in S_{\Q}$, then
\[
 N_{F/\Q}(\chi(s)) = 1 = \chi(s)^{d}
\]
and since $d > 1$, this implies $\chi(s) = 1$. Therefore, $\chi(S_{\Q})\cap\Q^{*} = \{1\}$, and from the above it follows that $\chi(P\cap G_{\Q})\cap\Q^{*} = \{1\}$ as well.

These arguments immediately extend from $P\cap G_{\Q}$ to $P\cap\C$: if $B\in\C$, then $B^{n}$ belongs to $D_{\Q}$ for some $n\geq1$. Therefore, we get $\chi(B^{n}) = 1$ if $B^{n}$ belongs to a subgroup of $H$ on which $\chi$ is trivial, and it follows in this case that $\chi(B) = 1$ since $\chi$ is a character.

\subsection{$\Gamma\backslash G$ is noncompact} In this case, $\Gamma$ is commensurable with $SL_{2}(\mathbb{Z})$, and thus its commensurator is
\[
 \C = \left\{\frac{1}{(\det A)^{1/2}}A : A\in GL_{2}(\mathbb{Q})\right\}.
\]
Suppose $P\subset G$ is a parabolic subgroup stabilizing a point $z$ in $\mathbb{P}^{1}(\mathbb{R})\backslash \mathbb{P}^{1}(\Q)$. Let $\chi: P\cap\C\to\mathbb{R}^{*}$ be a real character. We first restrict our attention to $P\cap SL_{2}(\mathbb{Q})$. This consists of those rational matrices that stabilize $z$; in particular, this group is trivial if $z$ is not quadratic, so we assume there exist $a,b,c\in\Z$ such that $az^{2} + bz + c = 0$, with $(a,b,c)=1$ and $d=b^{2}-4ac > 0$. Then we have
\[
 P\cap\C = \left\{\begin{pmatrix} \frac{t+bu}{2} & au \\ -cu & \frac{t-bu}{2} \end{pmatrix} : t^{2}-du^{2}\in\Q^{+}, t,u\in\Q\right\}.
\]
The Zariski closure of $P\cap SL_{2}(\Q)$ is defined over $\Q$. Thus, an argument similar to that in the cocompact case implies that it suffices to show that $P$ does not contain a $\Q$-split torus. Suppose to the contrary that it does contain a $\Q$-split torus $S$. This torus is conjugate via an element of $GL_{2}(\Q)$ to the diagonal subgroup of $SL_{2}(\R)$, so $S$ stabilizes a point in $\mathbb{P}^{1}(\Q)$. The unipotent radical $U$ of $P$ contracts $S$, i.e. we have 
\[
 a^{-t}ua^{t}\to e\text{ as }t\to\infty\text{ for all }u\in U, \{a^{t}\}\subset S.
\]
Thus if $\{a^{t}\}\subset S$ is a one-parameter subgroup such that the geodesic ray $\{a^{t}y\}_{t=0}^{\infty}$ converges to $z$ for some $y\in\mathbb{H}$, then we have for any $u\in U$
\[
 \lim_{t\to\infty}d_{\mathbb{H}}(ua^{t}y, a^{t}y) = \lim_{t\to\infty}d(a^{-t}ua^{t}y, y) <\infty
\]
 since the set $\{a^{-t}ua^{t}:t\geq 0\}$ is bounded. It follows that $U$ and therefore all of $P$ fixes the same boundary point as $S$. But this contradicts the hypothesis that $P$ stabilizes $z\in\mathbb{P}^{1}(\mathbb{R})\backslash\mathbb{P}^{1}(\Q)$.

\section{The general semisimple case}\label{sec: semisimple} Let $G$ be the real locus of a connected, semisimple algebraic group of noncompact type with finite center defined over $\mathbb{Q}$ with $G\subset SL_{n}(\R)$ for some $n$, $\Gamma$ a lattice in $G$, and $u$ an Ad-unipotent element of $G$, acting by right translation on the homogeneous space $X = \X$. We aim to prove the following.

\begin{thm}
For any point $x\in X$ whose $u$-orbit is equidistributed relative to the $G$-invariant Borel probability measure $\nu_{G}$ on $X$ and any continuous, bounded function $f$ on $X$, we have
 \begin{equation}
 \lim_{N\to\infty}\frac{1}{N}\sum_{n=0}^{N-1}\mu(n)f(xu^{n}) = 0.
 \end{equation}
 \label{thm: disjoint}
\end{thm}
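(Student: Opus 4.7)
The plan follows the $SL_{2}$ template of Section \ref{sec: sl2}. First apply the disjointness criterion (Theorem \ref{thm: criterion}) to $F(n) = f(xu^{n})$: it suffices to bound
\[
\frac{1}{N}\sum_{m\leq N} f(xu^{pm})\overline{f(xu^{qm})}
\]
for all but finitely many pairs of distinct primes $p,q$. Invoking Theorem \ref{thm: equidistribution} on $X\times X$ with the Ad-unipotent element $(u^{p},u^{q})$, this correlation converges to $\int (f\otimes\overline{f})\,d\nu$ for some ergodic algebraic joining $\nu$ of $(X,u^{p},\nu_{G})$ with $(X,u^{q},\nu_{G})$. If $\nu = \nu_{G}\times\nu_{G}$, replacing $f$ by $f - \int f\,d\nu_{G}$ reduces matters to the prime number theorem, so the task becomes excluding nontrivial joinings for all but finitely many prime pairs.

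A nontrivial ergodic joining, via Ratner's joinings theorem (\cite{R1}), would produce an element $\beta\in\C$ with $\beta u^{p}\beta^{-1} = u^{q}$ and $\widetilde{x}\beta\widetilde{x}^{-1}\in\C$, where $\Gamma\widetilde{x}=x$. Letting $\chi: N_{G}(U)\to\R^{*}_{>0}$ denote the character $\gamma u\gamma^{-1} = u^{\chi(\gamma)}$, with $U$ the one-parameter unipotent subgroup containing $u$, this would force $\chi(\beta) = p/q$ with $\beta\in P\cap\C$ for the parabolic $P = N_{G}(U)$. The problem therefore reduces to the algebraic statement that, under the equidistribution hypothesis, $\chi\bigl((\widetilde{x}^{-1}P\widetilde{x})\cap\C\bigr)\cap\Q^{*} = \{1\}$.

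The conceptual input is that equidistribution of the $u$-orbit with respect to $\nu_{G}$ forbids $P$ from being $\Q$-rational, since otherwise the orbit would be trapped in a proper homogeneous subspace associated to a $\Q$-parabolic. To extract the character bound I would pass to the $\Q$-Zariski closure $H$ of $P\cap G_{\Q}$ and, as in the $SL_{2}$ case, reduce $\chi$ to its restriction to a maximal $\Q$-torus $S\subset H$. In the $\Q$-anisotropic case the norm trick from the cocompact $SL_{2}$ proof extends verbatim: $\chi|_{S}$ is defined over a number field $F$ with $d = [F:\Q] > 1$, and the composite $N_{F/\Q}\circ\chi$ is a $\Q$-character of the anisotropic torus $S$, necessarily trivial, forcing $\chi(s)^{d} = 1$ whenever $\chi(s)\in\Q^{*}$. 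Raising to powers to handle the commensurability defect (as in the closing paragraph of Section \ref{sec: sl2}) promotes this from $G_{\Q}$ to $\C$.

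The substantive obstacle is the $\Q$-isotropic case, which does not arise for cocompact $SL_{2}$ lattices. A nontrivial $\Q$-split subtorus $S'\subset H$ is contained, by Borel--Tits theory, in a proper $\Q$-parabolic $P_{0}\subset G$ whose unipotent radical is contracted by $S'$; since $S'$ also contracts $U\subset P$, one expects $P$ and $P_{0}$ to share a common limiting flag in the spherical building of $G$, forcing $P$ to be $\Q$-rational itself and contradicting the equidistribution assumption. The main difficulty is making this precise for arbitrary semisimple $G$: the hyperbolic-boundary argument used in the noncompact $SL_{2}$ case must be replaced by a Bruhat--Tits building argument valid uniformly in rank, and one has to accommodate rank-one non-arithmetic lattices as well, where Margulis's commensurator superrigidity theorem supplies the arithmetic input needed to make sense of "$\Q$-rational" inside $\C$ in the first place.
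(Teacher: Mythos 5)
Your outline correctly captures the high-level reduction: the disjointness criterion, Ratner's uniform distribution and joinings theorems, the character $\chi$ on the normalizer of $U$, and the norm trick that disposes of the $\Q$-anisotropic torus part. However, two genuine gaps prevent this from being a proof.

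First, you explicitly defer what you yourself call ``the substantive obstacle,'' namely the $\Q$-isotropic case, with a sketch about shared limiting flags in the spherical building, which you label ``the main difficulty'' and do not carry out. That case is the core content of the theorem. The paper resolves it (Theorem \ref{thm: torus} via Proposition \ref{prop: generic}) by a geometric argument: using the Avramidi--Witte~Morris characterization of horospherical limit points of $\Gamma$ in terms of $\Q$-split flats (Theorem \ref{thm: AM}), one shows that if $S$ is a nontrivial $\Q$-split $\Q$-torus in $\xi N_{G}(U)\xi^{-1}$ with $\chi(S)\neq\{1\}$, then an endpoint of a geodesic ray in the associated flat is a horospherical limit point for $\Gamma$, contradicting Theorem \ref{thm: AM}. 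The proof that the endpoint is horospherical uses the density of $\Gamma\xi U$ in $G$ (equivalently, the genericity of the $u$-orbit) to force the orbit $\Gamma y$ into every horoball, after a root-space calculation places $\xi U\xi^{-1}$ inside the contracting horospherical subgroup $N$ determined by the Weyl chamber containing the one-parameter group $\{a^{t}\}\subset S$. Your flag-sharing picture is a plausible intuition, but it is not the argument and you give no indication of how to make it precise.

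Second, your treatment of non-arithmetic lattices is incorrect. You invoke ``Margulis's commensurator superrigidity,'' but superrigidity fails for rank-one lattices, which is precisely where non-arithmetic lattices live. The paper instead uses Margulis's arithmeticity criterion (Theorem \ref{thm: margulis}): if $\Gamma$ is non-arithmetic then $\C$ is discrete, and in fact a lattice, so $\C\cap N^{u}_{\xi}$ is finitely generated, hence $\chi(\C\cap N^{u}_{\xi})$ is a finitely generated subgroup of $\R^{*}_{>0}$ and therefore contains only finitely many ratios $p/q$ of primes. Note that the correct target is finiteness of the intersection with $\{p/q : p,q\text{ prime}\}$ (Theorem \ref{thm: character}), not the stronger $\chi(\cdot)\cap\Q^{*}=\{1\}$ that you assert; in the non-arithmetic case one cannot expect triviality, and finiteness is all Theorem \ref{thm: criterion} requires.

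Two smaller points: $N_{G}(U)$ is not in general parabolic (the paper invokes Borel--Tits to produce a parabolic $P$ with $N_{G}(U)\subset P$), and the whole argument must first be reduced to irreducible lattices via a finite cover and a product decomposition, since the form of Ratner's joinings theorem the paper applies requires irreducibility.
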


Let $u=\exp v$ where $v\in\frak{g}$ is nilpotent, and define the one-parameter unipotent subgroup
\[
 U = \{\exp(tv): t\in\R\} = \left\{u^{t}\right\}.
\]
Theorem \ref{thm: criterion} tells us that in order to establish M\"{o}bius disjointness, we must study correlations of the $u^{p}$ and $u^{q}$ actions, where $p\neq q$ are distinct primes. Applying Ratner's uniform distribution theorem (Theorem \ref{thm: equidistribution}) to the action of $(u^{p}, u^{q})$ on $\X\times\X$, we have
\begin{equation}
\lim_{N\to\infty}\frac{1}{N}\sum_{n\leq N}f(xu^{pn})\overline{f(xu^{qn})} = \int_{\X\times\X}f\otimes\overline{f}\,\textrm{d}\nu,
\label{eq: semisimple correlations}
\end{equation}
where $\nu$ is an ergodic algebraic joining of $(\X,u^{p},\nu_{G})$ with $(\X,u^{q},\nu_{G})$ which is supported on the closed orbit $\overline{\{(xu^{pn},xu^{qn})\}}_{n\in\mathbb{N}}$. 

Write $x = \Gamma\xi$, where $\xi\in G$. Let $N^{u} = N_{G}(U)$ and $N^{u}_{\xi} = \xi N^{u}\xi^{-1}$. For the time being, we restrict our attention to the case when $\Gamma$ is an \emph{irreducible} lattice in $G$. Then by Ratner's joinings theorem (\cite{R1}, though we use the implication in the irreducible case from Theorem 3.8.2 in \cite{KSS}), if $\nu \neq \nu_{G}\times\nu_{G}$, there exists $\beta\in G$ with
\[
 \beta u\beta^{-1} = u^{p/q} 
\] 
(so $\beta\in N^{u}$) such that 
\[
\xi\beta\xi^{-1}\in \C \cap N^{u}_{\xi}.
\]

There is a character $\chi: N^{u}\to\R_{>0}^{*}$ determined by
\[
 \alpha u \alpha^{-1} = u^{\chi(\alpha)} \text{ for } \alpha\in N^{u} = N_{G}(U).
\]
This character is then also well-defined on any conjugate of $P$, and we see from the above that
\[
 \chi\left(\xi\beta\xi^{-1}\right) = p/q.
\]
In order to apply Theorem \ref{thm: criterion}, we will show that the existence of such a $\beta$ forces the pair $(p,q)$ to belong to a finite set determined by $x$. 
\begin{thm}
For the character $\chi$ described above, the intersection
 \[
  \chi\left(\C\cap N^{u}_{\xi}\right)\cap\left\{\frac{p}{q}: p,q \text{ are prime}\right\} 
 \]
 is finite. Therefore, if $x\in\X$ is $u$-generic for the Haar measure $\nu_{G}$, then there are at most finitely many pairs of primes $p\neq q$ such that there is a nontrivial joining of $(\X,u^{p},\nu_{G})$ with $(\X,u^{q},\nu_{G})$ which is supported on the closed orbit $\overline{\{(xu^{pn},xu^{qn})\}}_{n\in\mathbb{N}}$.
 \label{thm: character}
\end{thm}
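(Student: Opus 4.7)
\medskip

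\noindent\emph{Proof plan.} The approach generalizes the two subcases of the $SL_{2}$ argument of Section \ref{sec: sl2}. By Margulis arithmeticity (for higher-rank $\Gamma$) together with the commensurator characterization of arithmeticity (for rank one), the argument reduces to the case where $\Gamma$ is arithmetic with respect to the given $\Q$-structure on $G$; when $\Gamma$ is non-arithmetic, one has $[\C:\Gamma]<\infty$, and $\chi$ restricted to the discrete subgroup $\Gamma\cap N^{u}_{\xi}$ admits a direct and more elementary analysis. Assuming $\Gamma$ is arithmetic, $\Gamma$ is commensurable with $G_{\Z}$ and $\C$ coincides up to scalars with $G_{\Q}$. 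Let $H$ be the Zariski closure of $\C\cap N^{u}_{\xi}$ in $G$; since this set consists of essentially $\Q$-rational matrices contained in the $\R$-algebraic subgroup $N^{u}_{\xi}$, $H$ is a $\Q$-algebraic subgroup of $G$ contained in $N^{u}_{\xi}$. The character $\chi$ restricts to an algebraic character $\chi|_{H}$, which factors through a maximal torus $T$ of a Levi of $H^{\circ}$; write $T=T_{s}\cdot T_{a}$ for the almost-direct decomposition into $\Q$-split and $\Q$-anisotropic parts.

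\noindent\emph{Two-step attack.} The strategy is first to show $T_{s}$ is trivial, then to apply a norm argument to $T_{a}$. For the first step, a nontrivial $\Q$-split subtorus $S$ of $H$, together with the unipotent group $\xi U\xi^{-1}$ that $S$ normalizes, would generate a proper closed subgroup of $G$ whose orbit through $x$ is closed and of finite volume, contradicting the hypothesis that the $u$-orbit of $x$ equidistributes to the Haar measure $\nu_{G}$. This is the higher-rank analog of the noncompact $SL_{2}$ argument, where a $\Q$-split torus in the parabolic $P$ would force $P$ to stabilize a $\Q$-rational boundary point, contradicting $z\notin\mathbb{P}^{1}(\Q)$. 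With $T=T_{a}$ now $\Q$-anisotropic, the norm argument from Section \ref{sec: sl2} applies essentially verbatim: if $\chi|_{T}$ is nontrivial it is defined over a proper finite extension $F/\Q$, and $N_{F/\Q}\circ\chi|_{T}$ is a $\Q$-rational character of the $\Q$-anisotropic torus $T$, hence trivial. For $t\in T(\Q)$ with $\chi(t)\in\Q^{*}$, this yields $\chi(t)^{[F:\Q]}=1$, forcing $\chi(t)=1$ since $\chi$ takes values in $\R^{*}_{>0}$. Therefore $\chi(\C\cap N^{u}_{\xi})\cap\Q^{*}=\{1\}$, and its intersection with $\{p/q:p,q\text{ prime}\}$ is trivially finite.

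\noindent\emph{Main obstacle.} The hardest step will be the elimination of $T_{s}$: translating the ergodic-theoretic $u$-genericity of $x$ into the algebraic assertion that $H$ contains no nontrivial $\Q$-split torus. This requires a careful use of Ratner's topological rigidity applied to the closed subgroup of $G$ generated by $\xi U\xi^{-1}$ together with a hypothetical $\Q$-split torus in $H$, and is considerably more delicate than the elementary boundary-point analysis that sufficed for $SL_{2}$. Once this algebraic-geometric step is in place, the concluding norm argument is a streamlined adaptation of Section \ref{sec: sl2}.
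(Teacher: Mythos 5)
Your high-level architecture matches the paper's: non-arithmetic $\Gamma$ handled via Margulis (commensurator is a lattice, hence the image of $\chi$ is finitely generated and contains only finitely many prime ratios), arithmetic $\Gamma$ reduced via the Zariski closure of $G_{\Q}\cap N^{u}_{\xi}$ to a torus $T = T_s T_a$, and the norm argument on $T_a$ is essentially verbatim what the paper does. You also correctly identify the elimination of the $\Q$-split part $T_s$ as the crux.

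However, the mechanism you propose for this crucial step has a genuine gap. You claim that a nontrivial $\Q$-split torus $S\subset H$ together with $\xi U\xi^{-1}$ would "generate a proper closed subgroup of $G$ whose orbit through $x$ is closed and of finite volume," contradicting genericity. There are two problems. First, Ratner's topological rigidity controls orbit closures of \emph{unipotent-generated} subgroups; it says nothing a priori about $\langle S, \xi U\xi^{-1}\rangle$, which contains semisimple elements of $S$. Second, and more fundamentally, there is no reason the relevant conjugate of this group should intersect $\Gamma$ in a lattice, or that the orbit $\Gamma\xi\langle\xi^{-1}S\xi, U\rangle$ should be closed of finite volume: $S$ is a $\Q$-torus with respect to the given $\Q$-structure, but after conjugation by the \emph{real} (not rational) element $\xi$, the rationality is lost, and the orbit closure is exactly what you cannot control. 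This is precisely why the paper takes an entirely different route for this step: it works not in $\X$ but in the symmetric space $Y=G/K$, shows (Proposition~\ref{prop: generic}) that genericity of the $u$-orbit forces a certain boundary point of a flat $Ay\subset Y$ to be a \emph{horospherical limit point} for $\Gamma$, and then invokes the Avramidi--Witte Morris theorem (Theorem~\ref{thm: AM}) that horospherical limit points cannot lie on the boundary of any $\Q$-split flat. Contrasted with this, your "closed orbit of smaller volume" heuristic is not salvageable as written. You also omit the easy cocompact subcase, where Borel--Harish-Chandra gives $\Q$-anisotropy of $G$ outright, and the reduction of reducible $\Gamma$ to its irreducible factors; these are minor structural omissions compared to the gap above.
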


The proof of this is divided according to the nature of the irreducible lattice $\Gamma$. Recall that a lattice $\Gamma$ is called \emph{arithmetic} if there exist
\begin{enumerate}
 \item A closed, connected, semisimple subgroup $G'$ of some $SL_{m}(\R)$ such that $G'$ is defined over $\Q$, \\
\item Compact, normal subgroups $K$ and $K'$ of $G$ and $G'$, respectively, and \\
\item an isomorphism $\phi: G/K \to G'/K'$,
\end{enumerate}
such that $\phi(\overline{\Gamma})$ is commensurable to $\overline{G'_{\Z}}$, where $\overline{\Gamma}$ and $\overline{G'_{\Z}}$ are the images of $\Gamma$ and $G'_{\Z}$ in $G/K$ and $G'/K'$, respectively (\cite{WM2} Def. 5.18).

For nonuniform lattices in the kinds of groups on which we'll focus, there is no need for the compact subgroups arising in this definition:
\begin{prop}[\cite{WM2} Cor. 5.27]
 Assume $\Gamma$ is an irreducible, arithmetic, nonuniform lattice in the semisimple group $G$, which is connected and has no compact factors. Then, possibly after replacing $G$ by an isogenous group, there is an embedding of $G$ in some $SL_{m}(\R)$ such that $G$ is defined over $\Q$ and $\Gamma$ is commensurable to $G_{\Z}$.
\label{prop: commensurable}
\end{prop}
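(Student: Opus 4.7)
My plan is to start from the definition of arithmeticity and successively eliminate the compact subgroups $K$ and $K'$ up to isogeny, using the nonuniformity of $\Gamma$ as the essential driving input. Applying the definition directly, we obtain a connected semisimple $G'\subset SL_{m}(\R)$ defined over $\Q$, compact normal subgroups $K\triangleleft G$ and $K'\triangleleft G'$, and an isomorphism $\phi: G/K\to G'/K'$ identifying $\overline{\Gamma}$ with a lattice commensurable to $\overline{G'_{\Z}}$.

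The first step is to exploit nonuniformity. Since $K$ and $K'$ are compact, $\Gamma\backslash G$ is noncompact if and only if $G'_{\Z}\backslash G'$ is. By Godement's compactness criterion, the nonuniformity of the latter forces $G'_{\Q}$, and hence (after passing to a finite-index subgroup) $G'_{\Z}$ itself, to contain nontrivial unipotent elements. Since no compact subgroup of $SL_{m}(\R)$ contains a nontrivial unipotent, these elements survive in the quotient $G'/K'$, yielding unipotents inside $\overline{G'_{\Z}}$. This observation both rules out certain pathologies for $K'$ and will feed into the next step via Borel density.

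The central step is to arrange that $G'$ has no compact real factors. Decompose $G'(\R)=G'_{c}\cdot G'_{nc}$ as an almost direct product of its $\R$-simple factors, grouped into compact and noncompact parts. Since $G$ has no compact factors and $K$ is compact normal, $G/K\cong G'/K'$ has no compact factors, so $K'$ must cover $G'_{c}$ up to a finite central piece. The subtle obstacle, and what I expect to be the hardest point, is that this decomposition is a priori only defined over $\R$: a single $\Q$-simple factor of $G'$ may have both compact and noncompact archimedean components, as occurs for Weil restrictions from a totally real field. To handle this I would combine the irreducibility of $\Gamma$ with Borel density to argue that $\overline{G'_{\Z}}$ projects densely onto each $\Q$-simple factor; this density together with the unipotents produced above rules out any purely compact $\Q$-simple factor, since $G'_{\Z}$ in such a factor would be simultaneously finite (discrete in a compact group) and infinite (containing a unipotent of infinite order). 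One then replaces $G'$ by the quotient $\Q$-group obtained by dividing out the maximal compact $\Q$-anisotropic piece, preserving $G'_{\Z}$ as an arithmetic subgroup up to commensurability and reducing to the case $G'_{c}=\{e\}$, so that $K'$ becomes finite and central.

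With $K'$ finite central, $G'\to G'/K'$ is a central isogeny. Transferring the $\Q$-structure on $G'$ through $\phi$ and this isogeny, and replacing $G$ by an appropriate cover or quotient as permitted by the phrase \emph{after replacing $G$ by an isogenous group}, one obtains an embedding of $G$ into $SL_{m}(\R)$ defined over $\Q$ in which $\Gamma$ is commensurable to $G_{\Z}$. Every remaining verification — that the $\Q$-structure descends along the isogeny, and that commensurability is preserved — is formal manipulation of the definition; the real content lies in the compact-factor removal step, where irreducibility, nonuniformity, and Godement's criterion together do the essential work.
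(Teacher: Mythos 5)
The paper does not prove this statement---it cites it as a known result from Witte Morris (\cite{WM2}~Cor.~5.27)---so there is no proof in the text to compare against. Evaluating your proposal on its own: you correctly isolate the delicate point, namely that a single $\Q$-simple factor of $G'$ can have both compact and noncompact archimedean components (as for a quaternion algebra over a totally real field ramified at some but not all infinite places). But the argument you then give does not actually resolve that case. You rule out \emph{purely} compact $\Q$-simple factors using Borel density and unipotents, and then propose to ``divide out the maximal compact $\Q$-anisotropic piece'' and conclude that $G'_c=\{e\}$. That conclusion does not follow: in a mixed $\Q$-simple factor, the compact archimedean part is not a $\Q$-subgroup that can be quotiented away, so after removing the purely compact $\Q$-factors, $G'_c$ is still nontrivial in general and your reduction stalls exactly at the case you had flagged as the hard one.

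What actually closes the gap is a sharper use of the ingredients you already named. Irreducibility of $\Gamma$ forces $G'$ to be $\Q$-simple, hence of the form $\text{Res}_{K/\Q}(H_0)$ with $H_0$ absolutely simple over a number field $K$, so that $G'(\R)=\prod_{v\mid\infty}H_0(K_v)$. Nonuniformity, via Godement's criterion, gives a nontrivial unipotent $u\in H_0(K)$; this $u$ maps to a \emph{nontrivial} unipotent in every archimedean factor $H_0(K_v)$ (the embeddings $K\hookrightarrow K_v$ are injective), but a compact Lie group contains no nontrivial unipotent. Hence no $H_0(K_v)$ is compact, i.e.\ $G'_c$ is trivial outright---there is nothing to divide out---and $K'$ is automatically finite central. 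Your final step of transferring the $\Q$-structure through the central isogenies $G\to G/K\cong G'/K'\leftarrow G'$ is then fine, but the compact-factor removal has to be argued in this ``all archimedean places at once'' fashion rather than factor by factor.
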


First, we may easily prove Theorem \ref{thm: character} in the case that $\Gamma$ is a non arithmetic lattice.

\begin{proof}[Proof of Thm \ref{thm: character} when $\Gamma$ is not an arithmetic lattice in $G$]
We have the following theorem of Margulis:
\begin{thm}[\cite{Mar}] 
Suppose $\Gamma$ is an irreducible lattice in the semisimple Lie group $G$. Then $\Gamma$ is arithmetic if and only if $\C$ is dense in $G$. Moreover, if $\Gamma$ is nonarithmetic, then $\C$ is a lattice in $G$ as well. 
\label{thm: margulis}
\end{thm}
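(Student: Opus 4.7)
The plan is to split the biconditional into its two directions and handle the ``moreover'' clause separately; throughout, the central engine is Margulis's superrigidity theorem, which forces abstract virtual homomorphisms between lattices in the kind of semisimple groups considered here to extend to algebraic maps.

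First I would tackle the easy direction: arithmeticity $\Rightarrow$ density of $\C$. After invoking Proposition \ref{prop: commensurable} (and dealing with any compact factors by projecting), one may assume $\Gamma$ is commensurable with $G_{\Z}$ inside some $\Q$-embedding $G \subset SL_{m}(\R)$. Then $G_{\Q}$ commensurates $G_{\Z}$, so $G_{\Q} \subseteq \C$. Density of $G_{\Q}$ in $G$ follows because a $\Q$-isotropic semisimple group is generated by its one-parameter unipotent $\Q$-subgroups, each of which has dense rational points in its real locus. If there are no $\Q$-isotropic factors, one instead appeals directly to strong approximation.

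The reverse direction, density of $\C \Rightarrow$ arithmeticity, is the heart of the theorem. The strategy is to use the dense commensurator to manufacture a $\Q$-form of $G$ in which $\Gamma$ is integral. Each $g \in \C$ yields an abstract isomorphism of finite-index subgroups $\varphi_{g}: \Gamma \cap g^{-1}\Gamma g \to \Gamma \cap g\Gamma g^{-1}$ by conjugation. Margulis's superrigidity forces each $\varphi_{g}$ to extend to an algebraic automorphism of $G$ defined over a number field, giving a homomorphism $\C \to \mathrm{Aut}(G)(\overline{\Q})$. To package these coherently, I would form the diagonal embedding $\Gamma \hookrightarrow G \times L$, where $L$ is an appropriate totally disconnected locally compact completion of $\C$ (built from the commensurability relation), observe that $\Gamma$ lands discretely with finite covolume in $G \times L$, and apply superrigidity to the projection onto $L$. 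The result is a $\Q$-structure on $G$ in which $\Gamma$ consists of integral points. The hard part — and the main obstacle — is controlling the field of definition of the extended automorphisms and descending from $\overline{\Q}$ to $\Q$; this descent is essentially the content of Margulis's arithmeticity theorem, and superrigidity is indispensable to it.

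For the moreover clause, assume $\Gamma$ is non-arithmetic, so by the equivalence already established, $\C$ is not dense. Let $H = \overline{\C}$, a proper closed subgroup of $G$ containing $\Gamma$. Since $\Gamma \subset \C$, conjugation by $\Gamma$ preserves $\C$ and hence normalizes the identity component $H^{0}$. By Borel density, $\Gamma$ is Zariski dense in $G$, so $H^{0}$ is normalized by all of $G$ and is therefore a closed connected normal subgroup; irreducibility of $\Gamma$ together with properness of $H$ forces $H^{0}$ to lie in the (finite) center of $G$. Hence $H = \C$ is discrete modulo this center, and a discrete subgroup of $G$ containing the lattice $\Gamma$ is itself a lattice, proving that $\C$ has finite covolume.
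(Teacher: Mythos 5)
The paper does not prove this statement: it is cited directly to Margulis's book \cite{Mar} as a known result and applied immediately. There is therefore no in-paper proof to compare against; your attempt is an independent reconstruction of a deep theorem.

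The forward direction (arithmetic $\Rightarrow$ dense commensurator) is standard and essentially as you describe. The converse is Margulis's arithmeticity theorem itself; your sketch correctly names superrigidity applied after adelically completing $\C$ as the engine and the descent from $\overline{\Q}$ to $\Q$ as the main difficulty, but the pivotal sentence ``the result is a $\Q$-structure on $G$ in which $\Gamma$ consists of integral points'' compresses the substance of a long chapter into a single clause and should not be read as a proof. For a theorem of this depth it is reasonable to outsource the converse to Margulis, but you should say explicitly that this is what you are doing rather than present the outline as if it closed the argument. The ``moreover'' clause is the one piece genuinely within reach, and your argument is correct in outline: $\Gamma$ normalizes $\C$ and hence $H = \overline{\C}$ and $H^{0}$; Borel density (valid because $G$ is taken to have no compact factors, the standing hypothesis for Margulis's theorem) promotes $H^{0}$ to a normal subgroup of $G$; irreducibility of $\Gamma$ together with properness of $H$ then forces $H^{0}$ to be trivial. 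Two small tightenings are in order. First, ``$H^{0}$ lies in the finite center'' already forces $H^{0} = \{e\}$, since $H^{0}$ is connected and the center is discrete, so there is no need to pass to a quotient. Second, once $H$ is discrete you should record that $\C = H$: $\C$ is dense in its closure $H$, and a discrete group has no proper dense subgroups. Then the observation that a discrete overgroup of a lattice has finite index over it, hence is itself a lattice, finishes the clause.
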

Hence, we have in this case that $\C$ is a lattice in $G$. In particular, $\C\cap N^{u}_{\xi}$ is finitely generated as an abstract group, and therefore so too is $\chi(\C\cap N^{u}_{\xi})$. It follows that the set $\{p/q\in\chi(\C\cap N^{u}_{\xi}) : p,q \text{ are prime}\}$ is finite.  
\end{proof}

We now turn to proving Thm. \ref{thm: character} when $\Gamma$ is arithmetic. As we now show, this reduces to a statement about $\Q$-split tori inside $\C\cap N^{u}_{\xi}$. Let $H$ be the identity component of the Zariski closure in $SL_{n}(\mathbb{R})$ of $G_{\Q}\cap N^{u}_{\xi}$ (this does not quite equal $\C\cap N^{u}_{\xi}$, but we will show in Proposition \ref{prop: triviality} below that these groups are equivalent for our purposes). $H$ is the group of $\mathbb{Q}$-points of an algebraic group defined over $\mathbb{Q}$ because $G_{\mathbb{Q}}\cap N^{u}_{\xi}$ consists of rational matrices; thus, it admits a Levi decomposition $H = LTU$ over $\mathbb{Q}$, where $L$ is semisimple, $U$ is unipotent, and $T$ is a $\mathbb{Q}$-torus. 

It follows that $\chi(H) = \chi(T)$. Moreover, we have $H_{\Q} = G_{\Q}\cap N^{u}_{\xi}$, and $H_{\Q} = L_{\Q}T_{\Q}U_{\Q}$ because $H$ is defined over $\Q$. 

Write $T = SE$ where the $\mathbb{Q}$-tori $S$ and $E$ are, respectively, $\Q$-split and $\Q$-anisotropic. Then $T_{\Q} = S_{\Q}E_{\Q}$ because $T$ is defined over $\Q$, and therefore $\chi(H_{\Q}) = \chi(T_{\Q}) = \chi(S_{\Q})\chi(E_{\Q})$.

We now make the following key assumption:
\begin{center}
\emph{Suppose that for the character $\chi$ discussed above, we have $\chi(S) = 1$.}
\end{center}
This is the fundamental point that will occupy our discussion below.  If this assumption holds true, we will have by the above $\chi(H_{\Q}) = \chi(T_{\Q})= \chi(S_{\Q})\chi(E_{\Q}) = \chi(E_{\Q})$. Let $K/\Q$ be the smallest number field over which $E$ splits and let $d = [K : \Q]$; observe that $d > 1$ since $E$ is $\Q$-anisotropic. $\chi$ is defined over $K$ (as is any character of $E$), and we have $\chi(E_{\Q})\subset K^{*}$ since $E_{\Q}$ consists of matrices with rational entries. Therefore, letting $N_{K/\Q}$ be the norm map, the composite $\psi = N_{K/\Q}\circ\chi : E_{\Q}\to\Q^{*}$ is well-defined. 

Observe that $E_{\Q}$ has no nontrivial $\Q$-characters; indeed, it is Zariski dense in $E$, so such a character could be extended to a $\Q$-character on all of $E$, contradicting its $\Q$-anisotropy. But $\psi$ is a $\Q$-character of $E_{\Q}$ (indeed, the $\Q$-characters are precisely those characters invariant under the action
\[
\psi^{\sigma}(g) = \sigma(\psi(\sigma^{-1}g))
\]
of $\text{Gal}(K/\Q)$), and therefore must be trivial.

Suppose that $\chi(h)\in\Q$ for some $h\in E_{\Q}$. Then $N_{K/\Q}(\chi(h)) = \chi(h)^{d}$. But $N_{K/\Q}(\chi(h)) = \psi(h) = 1$ by the above, so $\chi(h)^{d} = 1$. Since $d > 1$ and $\chi$ only takes positive real values, it follows that $\chi(h) = 1$. This shows 
\[
 \chi(E_{\Q})\cap\Q^{*} = \{1\}.
\]
Since $\chi(H_{\Q}) = \chi(G_{\Q}\cap N^{u}_{\xi}) = \chi(E_{\Q})$, this yields Thm. \ref{thm: character}, so long as our main assumption above is true. Hence, the proof of Theorem \ref{thm: character} reduces to the following.
\begin{thm}
Suppose $S \subset \xi N_{G}(U)\xi^{-1}$ is a $\Q$-split $\Q$-torus, where the $U$-orbit of $x = \Gamma\xi$ is generic with respect to the Haar measure $\nu_{G}$ on $X$. Then $\chi(S) = \{1\}$. 
\label{thm: torus}
\end{thm}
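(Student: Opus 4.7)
The plan is to argue by contradiction via the reduction theory of arithmetic subgroups of $\Q$-algebraic groups. Since the nonarithmetic case is already dispatched, by Proposition~\ref{prop: commensurable} (and its analogue for uniform lattices) I may assume $G$ is $\Q$-defined and $\Gamma$ is commensurable with $G_{\Z}$. Suppose that $\chi|_{S}$ is nontrivial. Because $S$ is $\Q$-split, the action of $\mathrm{Ad}(S)$ on $\mathfrak{g}$ diagonalizes over $\Q$ and all its eigencharacters are $\Q$-defined; in particular $\chi|_{S}$, being the eigencharacter of $\mathrm{Ad}(S)$ on the line $\R\cdot\mathrm{Ad}(\xi)v$, is a $\Q$-character of $S$. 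By Zariski density of $S(\Q)$ in $S$, I can then choose $s\in S(\Q)$ with $\chi(s)\neq 1$, and after replacing $s$ by $s^{-1}$ if needed, $\lambda:=\chi(s)>1$.

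Next, I would associate to $s$ the $\Q$-parabolic subgroup
\[
 P^{+}(s) = \{g\in G : \{s^{-n}gs^{n}\}_{n\geq 0}\text{ is bounded}\},
\]
with unipotent radical $V:=V^{+}(s)$ having Lie algebra $\bigoplus_{\mu>1}\mathfrak{g}_{\mu}(s)$. Both $P^{+}(s)$ and $V$ are defined over $\Q$ because $s\in G_{\Q}$ is semisimple and $\R$-diagonalizable. The relation $\chi(s)=\lambda$ translates into $\mathrm{Ad}(s)\cdot\mathrm{Ad}(\xi)v = \lambda\cdot\mathrm{Ad}(\xi)v$ with $\lambda>1$, so $\mathrm{Ad}(\xi)v \in \mathrm{Lie}(V)$, whence $\xi U\xi^{-1}\subset V$, i.e., $U\subset\xi^{-1}V\xi$. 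This yields the containment
\[
 xU \;=\; \Gamma\xi U \;\subset\; \Gamma V\xi
\]
of subsets of $G$.

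The final step invokes the classical reduction-theory fact that for every proper $\Q$-parabolic $P$ of $G$ with unipotent radical $V$, the intersection $\Gamma\cap V$ is a cocompact lattice in $V$. Granted this, the right $V$-orbit of the identity coset $[e]\in\X$ is compact and $\Gamma V$ is closed in $G$. Since $V$ is a proper unipotent subgroup of the semisimple group $G$, $\dim V<\dim G$, and $\Gamma V$, being a countable union of $V$-cosets, is nowhere dense in $G$. Its right translate $\Gamma V\xi$ is therefore closed and nowhere dense, so its image in $\X$ has empty interior yet contains $xU$. This contradicts the density of $xU$ in $\X$, which follows from the equidistribution hypothesis against the full-support Haar measure $\nu_{G}$. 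Hence $\chi|_{S}$ must be trivial, as desired. The hard part will be identifying and correctly invoking the reduction-theory input; once it is in hand, the rest reduces to an elementary translation of the $\mathrm{Ad}(s)$-eigenspace decomposition into an obstruction to density of the $U$-orbit.
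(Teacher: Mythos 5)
Your proof is correct, and it takes a genuinely different route from the paper's. The paper works geometrically in the symmetric space $Y = G/K$: it realizes $N_G(U)$ inside a parabolic via Borel--Tits, studies the flat $Ay$ attached to a maximal $\R$-split torus $A \supset S$, and shows (via a root-space computation in Proposition~\ref{prop: generic}) that the density of the $U$-orbit forces some endpoint of the geodesic ray $\{a^{t}y\}$ to be a horospherical limit point, contradicting the Avramidi--Witte Morris characterization (Theorem~\ref{thm: AM}) of horospherical limit points as exactly those $\eta$ not lying on a boundary of a $\Q$-split flat. You instead stay entirely inside $G$: from a rational $s\in S(\Q)$ with $\chi(s)>1$ (using that $\chi|_S$ is an $\mathrm{Ad}$-weight of the $\Q$-split $S$, hence a $\Q$-character), you conjugate $\xi U\xi^{-1}$ into the unipotent radical $V = V^{+}(s)$ of a proper $\Q$-parabolic, invoke the reduction-theoretic fact that $\Gamma\cap V$ is a cocompact lattice in $V$ to conclude $\Gamma V$ is closed and nowhere dense in $G$, and then note that $xU\subset\pi(\Gamma V\xi)$ has empty interior, contradicting density of the generic orbit. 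Both arguments ultimately rest on the same reduction theory---the Avramidi--Witte Morris theorem is built on exactly the cocompactness of $\Gamma\cap V$ for $\Q$-rational unipotent radicals---but your version bypasses the boundary geometry, the Iwasawa decomposition, and the Weyl-chamber/root-space analysis that occupy the bulk of the paper's Proposition~\ref{prop: generic}, yielding a shorter and more self-contained proof. One small housekeeping point worth making explicit when you write this up: in the uniform arithmetic case the existence of the nontrivial unipotent $\Q$-subgroup $V$ already contradicts Godement's compactness criterion, so the argument degenerates there into the paper's observation that $G$ is $\Q$-anisotropic and hence $S$ is trivial; and you should state the standard fact you invoke (e.g., that $V_{\Z}\backslash V_{\R}$ is compact for any unipotent $\Q$-group $V$, so $\Gamma\cap V$ is cocompact for $\Gamma$ commensurable with $G_{\Z}$) with a reference, since that is the single reduction-theoretic input carrying the whole proof.
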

The proof of this may be divided into two cases.

\subsubsection{$\Gamma$ is an arithmetic lattice in $G$ and $\Gamma\backslash G$ is compact} 
After replacing $G$ by the group $G'$ arising in the definition of arithmetic lattices, we may assume that $\Gamma$ is commensurable with $G_{\Z}$ and the quotient $G_{\Z}\backslash G_{\R}$ is compact. Hence, the theorem of Borel and Harish-Chandra (\cite{BHC} Thm. 11.6) shows that $G$ is a $\Q$-anisotropic group, meaning it contains no nontrivial $\Q$-split torus. The same is therefore also true of any subgroup of $G$, and Theorem \ref{thm: torus} follows immediately in this case.

\subsubsection{$\Gamma$ is an arithmetic lattice in $G$ and $\Gamma\backslash G$ is not compact}
In this case, we may assume by Proposition \ref{prop: commensurable} that $\Gamma$ is commensurable with $G_{\Z}$. This allows us to restrict our attention to rational matrices by the following.
\begin{prop}
 Let $S$ be a subgroup of $\C$ and $\chi: S\to\mathbb{R}^{*}_{>0}$ a character. If $\chi$ is trivial on $S\cap G_{\Q}$, then $\chi$ is trivial on $S$. 
 \label{prop: triviality}
\end{prop}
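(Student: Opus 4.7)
The plan is to reduce Proposition \ref{prop: triviality} to the single algebraic fact that every $s \in S$ has a positive integer power $s^{n}$ lying in $G_{\Q}$. Granting this, the proposition follows immediately: for any $s \in S$ one has $s^{n} \in S \cap G_{\Q}$, so $\chi(s^{n}) = 1$ by hypothesis, whence $\chi(s)^{n} = 1$; since $\chi(s)$ is a positive real and $\R^{*}_{>0}$ is torsion-free, this forces $\chi(s) = 1$. This is exactly the mechanism already invoked for $SL_{2}(\R)$ earlier in the paper, where the explicit formula $\C = \{(\det A)^{-1/2}A : A \in GL_{2}(\Q)\}$ makes the claim transparent with $n=2$.

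The core work is therefore the power claim: every $s \in \C$ (hence every $s \in S \subset \C$) admits some positive power in $G_{\Q}$. Proposition \ref{prop: commensurable} places us in the setting where $\Gamma$ is commensurable with $G_{\Z}$ for a connected semisimple $\Q$-group $G$ with finite center, and any $s \in \C$ then commensurates $G_{\Z}$. By the classical description of commensurators of arithmetic subgroups — the same structural input that underlies the Margulis arithmeticity theorem quoted as Theorem \ref{thm: margulis} — the commensurator $\C$ coincides with $G_{\Q}$ modulo a finite contribution arising from the center $Z(G)$, so that $\C/(\C \cap G_{\Q})$ is a finite torsion group. Consequently there is a uniform integer $n$ (bounded in terms of $|Z(G)|$) with $s^{n} \in G_{\Q}$ for every $s \in \C$.

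The main obstacle is extracting this commensurator description in exactly the form required. I would approach it via the adjoint representation: any $s \in \C$ preserves the image of $G_{\Z}$ in $GL(\mathfrak{g})$ up to finite index, hence — modulo the finite central kernel of $\text{Ad}$ — lies in the $\Q$-points of the adjoint group, so that a bounded power of $s$ lands back in $G_{\Q}$. Subtleties could arise from the passage to the isogenous group provided by Proposition \ref{prop: commensurable}, but since isogenies have finite kernel and cokernel the property ``$\C/G_{\Q}$ is torsion'' is preserved under them, so this step should cause no real difficulty. Once the power claim is in hand, the three-line computation in the first paragraph closes the proof with no further use of the structure of $S$ beyond its being a subgroup of $\C$.
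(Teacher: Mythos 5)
Your proof follows exactly the paper's two-step mechanism: first show that every element of $\C$ has a positive-integer power in $G_{\Q}$ (the paper gets this by citing Prop.\ 4.6 on p.\ 206 of Platonov--Rapinchuk, which gives $\C = \pi^{-1}\bigl((G/N)_{\Q}\bigr)$ with $N$ a finite central subgroup, hence $\C/(\C\cap G_{\Q})$ has bounded exponent), and then triviality of $\chi$ on $S\cap G_{\Q}$ together with the torsion-freeness of $\R^{*}_{>0}$ forces $\chi\equiv 1$ on $S$. The only small imprecision is your assertion that $\C/(\C\cap G_{\Q})$ is a \emph{finite} torsion group; what the argument provides and what you actually need is just torsion of bounded exponent, and your adjoint-representation sketch is a reasonable informal route to the same structural fact the paper cites directly.
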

\begin{proof}
 Since $\Gamma$ is commensurable with $G_{\Z}$, Prop. 4.6 on pg. 206 of \cite{PR} yields $\C = \pi^{-1}((G/N)_{\Q})$, where $N$ is the largest invariant $\Q$-subgroup of compact type. Since $G$ is assumed to have noncompact type, $N$ is the finite subgroup generated by the centers of the $\Q$-simple factors of $G$. Since $N$ is finite, we see that if $g\in S\subset\C$, then $g^{d}\in G_{\Q}$ for some $d\in\mathbb{N}_{>0}$. But then $\chi(g^{d}) = \chi(g)^{d} = 1$ since $\chi$ is assumed to be trivial on $S\cap G_{\Q}$; as $\chi$ only takes positive real values, it follows that $\chi(g) = 1$.     
\end{proof}

In order to prove Theorem \ref{thm: torus} in this case (when $\X$ is non-compact), we will follow the ideas we used in the case of $G=SL_{2}(\mathbb{R})$ above, invoking the fundamental relationship between parabolic subgroups of $G$ and the boundary of the symmetric space $Y = G/K$.
\begin{prop}[\cite{GJT} Prop. 3.8]
 A closed subgroup $P\subset G$ is parabolic if and only if there exists a point $z$ on the visual boundary $Y(\infty)$ of $Y$ such that $P = \text{stab}_{G}(z)$. 
\end{prop}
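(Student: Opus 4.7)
The plan is to prove both directions via geodesic rays in the symmetric space $Y=G/K$ together with the eigenspace decomposition of $\mathfrak{g}$ under a Cartan element. Recall that points at infinity $z\in Y(\infty)$ correspond to equivalence classes of unit-speed geodesic rays, two rays being declared equivalent when they stay a bounded distance apart. By the Cartan decomposition, any geodesic ray emanating from the base point $o=eK$ has the form $\gamma_H(t)=\exp(tH)\cdot o$ for some $H\in\mathfrak{p}$ of unit norm, so every boundary point is represented this way.

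\textbf{Boundary point implies parabolic stabilizer.} Fix $z\in Y(\infty)$ with representative $\gamma_H$. Consider the $\mathrm{ad}(H)$-eigenspace decomposition $\mathfrak{g}=\bigoplus_{\lambda\in\R}\mathfrak{g}_\lambda$ and set $\mathfrak{p}_H=\bigoplus_{\lambda\geq 0}\mathfrak{g}_\lambda$. A standard computation shows $\mathfrak{p}_H$ is a parabolic subalgebra: it contains the reductive centralizer $\mathfrak{g}_0$ of $H$ together with the nilpotent ideal $\bigoplus_{\lambda>0}\mathfrak{g}_\lambda$, and coincides with its own normalizer. Let $P_H$ be the corresponding closed subgroup. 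I would then identify $\stab_G(z)$ with $P_H$ dynamically: $g\in \stab_G(z)$ exactly when $d(g\gamma_H(t),\gamma_H(t))$ is bounded as $t\to\infty$. Rewriting
\[
g\gamma_H(t) \;=\; \exp(tH)\bigl(\exp(-tH)g\exp(tH)\bigr)\cdot o,
\]
and using that the distance in $Y$ from $o$ to $hK$ is comparable to the distance of $h$ to $K$ in any left-invariant metric on $G$, this boundedness is equivalent to the family $\{\exp(-tH)g\exp(tH):t\geq 0\}$ being bounded in $G$. Expanding in the $\mathrm{ad}(H)$-eigenspaces, the negative-eigenspace contributions blow up exponentially while the nonnegative ones remain bounded, pinning down $g\in P_H$.

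\textbf{Parabolic implies boundary stabilizer.} Given a parabolic $P\subset G$, up to conjugation I may assume $P$ is standard with respect to a fixed maximal $\R$-split torus $A\subset G$ and a choice of positive roots. Then $P$ corresponds to a subset $I$ of simple roots, equivalently to a face $F_I$ of the closed Weyl chamber $\overline{\mathfrak{a}^+}\subset\mathfrak{p}$. Choosing any $H$ in the relative interior of $F_I$ makes $\mathfrak{p}_H$ agree precisely with the Lie algebra of $P$, so the previous step yields $P=\stab_G(\gamma_H(\infty))$. $G$-conjugation transports this to every parabolic.

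\textbf{Main obstacle.} The delicate step is the dynamical identification of $\stab_G(z)$ with $P_H$: one must rule out any contribution from negative $\mathrm{ad}(H)$-eigenspaces in a $g$ that stabilizes $z$, translate carefully between the Riemannian metric on $Y$ and a left-invariant metric on $G$, and show that $\stab_G(z)$ is the full normalizer of $\mathfrak{p}_H$ rather than a proper subgroup. Once that is in hand, the bijection between faces of $\overline{\mathfrak{a}^+}$ and $G$-conjugacy classes of parabolic subgroups makes the remainder of the argument routine.
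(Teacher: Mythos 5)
The paper does not prove this statement; it is cited verbatim as Proposition~3.8 of Guivarc'h--Ji--Taylor and used as a black box, so there is no proof in the source for you to match against. Evaluated on its own terms, your sketch follows the standard textbook route (Cartan element $H$, $\mathrm{ad}(H)$-eigenspace filtration, dynamical characterization of the stabilizer), and the plan is sound. Two points deserve care.

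First, the sentence ``the distance in $Y$ from $o$ to $hK$ is comparable to the distance of $h$ to $K$ in any left-invariant metric on $G$'' is false as stated: for $h=\exp(X)$ with $X$ nilpotent the $Y$-distance grows logarithmically in $\|X\|$ while a left-invariant metric grows linearly. What you actually need --- and what is true --- is only the qualitative statement that a family $\{h_t\}\subset G$ has $d_Y(o,h_tK)$ bounded if and only if $h_tK$ stays in a compact set, equivalently the $h_t$ stay in a compact subset of $G$ modulo right $K$-translation. That is all the dynamical identification requires, so the slip is harmless but should be reworded.

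Second, the step you rightly flag as the main obstacle, namely that boundedness of $\exp(-tH)g\exp(tH)$ forces $g\in P_H$, genuinely needs an argument: an arbitrary $g$ need not decompose as $\exp(X_-)\exp(X_0)\exp(X_+)$, so one cannot literally ``read off the negative-eigenspace contribution.'' The usual fix is either (i) the big-cell argument --- $\mathfrak{n}^-\times P_H\to G$, $(X,p)\mapsto\exp(X)p$, is a diffeomorphism onto a dense open set, and on this set $\exp(-tH)\exp(X)p\exp(tH)$ has an $\exp(e^{-t\,\mathrm{ad}H}X)$ factor that escapes to infinity whenever $X\neq0$, together with the Bruhat decomposition to cover $g$ outside the big cell; or (ii) a Pl\"ucker-type argument where $P_H$ is realized as the stabilizer of a line in $\wedge^{\dim\mathfrak{p}_H}\mathfrak{g}$ with a strictly dominant $\exp(tH)$-weight, so boundedness of the conjugates forces $g$ to preserve that line. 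Either patch is standard and short, and once inserted your argument is complete; your ``parabolic $\Rightarrow$ boundary point'' direction via faces of $\overline{\mathfrak{a}^+}$ is correct as written.
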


We now introduce several notions from \cite{AM} in order to relate behavior at the boundary of $Y$ to $\Q$-split tori in $G$, since the latter are the relevant object for Theorem \ref{thm: torus}. Let $y_{0}\in Y$ be any point. 
\begin{defi}
 We call $\eta\in Y(\infty)$ a horospherical limit point for $\Gamma$ if every open horoball based at $\eta$ intersects the orbit $\Gamma\cdot y_{0}$. 
\end{defi}
This definition is independent of the choice of $y_{0}$. Let's illustrate this notion for the case $G = SL_{2}(\R)$ and $\Gamma\subset G$ being any nonuniform arithmetic lattice. We claim that the horospherical limit points for $\Gamma$ acting on $G/K = \mathbb{H}$ are precisely the points $\mathbb{P}^{1}(\R\backslash\Q)$. Indeed, suppose that $\eta\in\mathbb{P}^{1}(\R)$ is \emph{not} a horospherical limit point for $\Gamma$. We may identify $\eta$ with the point at infinity by some element of $G$, under which the horoballs based at $\eta$ become simply the vertical regions $\{z = x + iy\in \mathbb{H} : y > t\}$ for $t\in\R_{>0}$. Thus, the assumption that $\eta$ is not a horospherical limit point for $\Gamma$ implies that for any basepoint $y_{0}\in \mathbb{H}$, there is some $T\in\R_{>0}$ such that the orbit $\Gamma y_{0}$ is contained in the region $\{x+iy: 0 < y \leq T\}$. 

Observe that $\eta$ belongs to $\mathbb{P}^{1}(\Q)$ if and only if its stabilizer $P = \stab_{G}(\eta)\subset G$ contains a $\Q$-split $\Q$-torus. Suppose to the contrary that $P$ contains a $\Q$-anisotropic $\Q$-torus $A$. Then $\eta$ is on the boundary of $Ay_{0}$ for some $y_{0}\in \mathbb{H}$. By the above, the orbit $(A\cap\Gamma)y_{0}$ is bounded below a fixed vertical height in $\mathbb{H}$. However, since $A$ is $\Q$-anisotropic, its group of $\Z$-points $A_{\Z} = A\cap SL_{2}(\Z)$ is a cocompact lattice in $A$, and therefore so too is $A\cap\Gamma$ since $\Gamma$ is commensurable with $SL_{2}(\Z)$. It follows that the orbit $Ay_{0}$ is also vertically bounded in $\mathbb{H}$; but this contradicts the hypothesis that $\eta$ lies on the boundary of $Ay_{0}$. Therefore, $P$ contains a $\Q$-split torus, so $\eta\in \mathbb{P}^{1}(\Q)$.

For the converse, suppose $\eta\in\mathbb{P}^{1}(\Q)$. Then its stabilizer $P$ contains a $\Q$-split torus $S$, and we have the Iwasawa decomposition $G = NSK$ where $N$ is the unipotent radical of $P$. If $\eta$ is a horospherical limit point for $\Gamma$, then it also is one for $SL_{2}(\Z)$, so we may find a sequence of elements $\gamma_{i}\in SL_{2}(\Z)$ such that, writing $\gamma_{i} = n_{i}s_{i}k_{i}$ under the Iwasawa decomposition, we have $\alpha(s_{i})\to\infty$, where $\alpha$ is the root of $S$ in $G$ obtained from the root of the usual $\Q$-split torus defined by
\[
 \alpha\begin{pmatrix} t & 0 \\ 0 & t^{-1} \end{pmatrix} = t^{2}.
\]
Since the stabilizer $P$ of $\eta$ is defined over $\Q$, the $\Z$ points $N_{\Z}$ of its unipotent radical $N$ form a cocompact subgroup of $N$, so there is a compact subset $C\subset N$ such that we may write any element of $N$ as $n = n'c$ where $n'\in N_{\Z}$ and $c\in C$. Therefore, for the sequence $\gamma_{i}$ above we can write 
\[
 \gamma_{i} = n_{i}'c_{i}s_{i}k_{i},
\]
and since $n_{i}\in N_{\Z}\subset SL_{2}(\Z)$, we may simply relabel as needed to see that there is a sequence $\gamma_{i}\in SL_{2}(\Z)$ such that $\gamma_{i} = c_{i}s_{i}k_{i}$, where $\alpha(s_{i})\to\infty$ and $c_{i}$ belongs to the fixed compact set $C\subset N$. Now, since $\alpha(s_{i}) \to\infty$, we have $s_{i}^{-1}ns_{i}\to e$ as $i\to\infty$ for any $n\in N$,  where $e$ is the identity in $G$. Hence, since $C\subset N$ is compact, a diagonal argument applied to the sequence $s_{i}^{-1}c_{i}s_{i}$ shows that, by passing to a subsequence if necessary, we have $s_{i}^{-1}c_{i}s_{i}\to e$ as $i\to\infty$. Therefore, if $i$ is sufficiently large,
\[
 s_{i}^{-1}\gamma_{i} = s_{i}^{-1}c_{i}s_{i}k_{i} \approx k_{i}.
\]
Since $k_{i}\in K$ and $K$ is compact, we may therefore assume by passing to another subsequence that the sequence $s_{i}^{-1}\gamma_{i}$ converges to some $g\in G$. Since $S$ is $\Q$-split, there is some $h\in GL_{2}(\Q)$ such that we may write 
\[
 s_{i} = h\begin{pmatrix} t_{i} & 0 \\ 0 & t_{i}^{-1} \end{pmatrix}h^{-1}
\]
where $t_{i}\to\infty$ as $i\to\infty$. Write 
\[
 h^{-1}g = \begin{pmatrix} a & b \\ c & d\end{pmatrix}.
\]
Then, since $s_{i}^{-1}\gamma_{i}$ converges to $g$, we have for all sufficiently large $i$ 
\[
 \gamma_{i}\approx s_{i}g = h\begin{pmatrix} t_{i} & 0 \\ 0 & t_{i}^{-1} \end{pmatrix}\begin{pmatrix} a & b \\ c & d\end{pmatrix} = h\begin{pmatrix} t_{i}a & t_{i}b \\ t_{i}^{-1}c & t_{i}^{-1}d \end{pmatrix}.
\]
It follows that for all sufficiently large $i$, every entry of the rightmost matrix must be close to the corresponding entry of $h^{-1}\gamma_{i}$. Since $h^{-1}\in GL_{2}(\Q)$, there is an integer $M$ such that $Mh^{-1}\in GL_{2}(\Z)$; then the bottom row of $Mh^{-1}\gamma_{i}$ is close to $(Mt_{i}^{-1}c, Mt_{i}^{-1}d)$ for all large $i$. But $c, d$ and $M$ are fixed, whereas $t_{i}\to\infty$; this implies that the bottom row of the integer matrix $Mh^{-1}\gamma_{i}$ consists only of zeros when $i$ is large enough, a contradiction. 

The ideas we've illustrated in the case of $SL_{2}(\Z)\subset SL_{2}(\R)$ have been generalized by Avramidi and Witte-Morris to yield the following theorem.  For any $\R$-split torus $S\subset G$, there exists a point $y\in Y$ such that $Sy$ is a flat in $Y$ (i.e. a complete, totally geodesic flat submanifold). We say that this flat is $\Q$-split if $S$ is defined over $\Q$ and $\Q$-split. 
\begin{thm}[\cite{AM} Thm. 1.3]
 A point $\eta\in Y(\infty)$ is a horospherical limit point for $\Gamma$ if and only if $\eta$ does not lie on the boundary of any $\Q$-split flat.
 \label{thm: AM}
\end{thm}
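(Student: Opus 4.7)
The plan is to generalize the $SL_2(\R)$ arguments above to arbitrary semisimple $G$, proving both directions of the biconditional by contraposition. The geometric dictionary is that $\eta \in Y(\infty)$ is determined by its stabilizer $P = \stab_G(\eta)$, and a flat ending at $\eta$ corresponds to an $\R$-split torus in $P$; the theorem should then follow from an analysis of whether such tori are defined over $\Q$ and, if so, whether they are $\Q$-split or $\Q$-anisotropic.

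For the contrapositive of the ``horospherical $\Rightarrow$ not on any $\Q$-split flat'' direction, suppose $\eta$ lies on the boundary of a $\Q$-split flat $Sy_0$, so that $P$ is a $\Q$-parabolic containing the $\Q$-split $\Q$-torus $S$. I would invoke Borel--Harish-Chandra reduction theory: $\Gamma$ admits a Siegel fundamental set adapted to $P$, and the standard finiteness of Siegel sets implies that a sufficiently deep horoball $B$ based at $\eta$ meets $\Gamma y_0$ only through cosets of $\Gamma \cap P$. Using the Iwasawa decomposition $G = NSK$ together with the cocompactness of $N_\Z$ in $N$, the $S$-component of any $\gamma y_0 \in B$ must stay bounded, so $B$ can be shrunk to miss $\Gamma y_0$ entirely. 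This generalizes the explicit matrix argument in the $SL_2$ case, where one arrives at an integer matrix with a zero bottom row.

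For the contrapositive of the ``not horospherical $\Rightarrow$ on some $\Q$-split flat'' direction, suppose some horoball at $\eta$ misses $\Gamma y_0$ entirely. The heart of the argument is the cocompactness observation from the $SL_2$ case: if an $\R$-split torus $A\subset P$ whose flat $Ay_0$ has $\eta$ on its boundary were $\Q$-defined and $\Q$-anisotropic, then $A\cap\Gamma$ would be cocompact in $A$ by Borel--Harish-Chandra applied to $A$, so $(A\cap\Gamma)y_0$ would accumulate at $\eta$ from inside every horoball, contradicting the hypothesis. Thus any $\Q$-defined torus through which the ray to $\eta$ passes must have a nontrivial $\Q$-split component, which will furnish the desired $\Q$-split flat.

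The main obstacle is that the $\R$-split torus underlying the geodesic ray to $\eta$ need not itself be defined over $\Q$, so the whole $\Q$-structure has to be extracted from $P$ rather than assumed. The right framework, developed in \cite{AM}, is to pass to the maximal $\Q$-parabolic subgroup associated with $P$ and to work with its rational Langlands decomposition $Q = N_Q A_Q M_Q$, in which $A_Q$ is automatically a $\Q$-split $\Q$-torus. One then uses the non-horospherical hypothesis, combined with the cocompactness argument above applied to $A_Q$ and to any $\Q$-anisotropic companions inside $M_Q$, to force the geodesic ray to $\eta$ to lie in a flat of the form $A_Q y_0'$. Carrying this out rigorously requires the theory of relative $\Q$-roots and the combinatorics of the rational Tits building, and is the technical core of the Avramidi--Morris proof.
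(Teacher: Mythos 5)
First, a contextual note: the paper does not prove this theorem. It is cited verbatim from Avramidi and Witte Morris, and the preceding $SL_{2}(\R)$ discussion in the paper is explicitly framed as an illustration of the relevant ideas in a special case, not as a proof of the general statement. There is therefore no paper-internal argument for you to have reproduced or diverged from.

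Your sketch faithfully abstracts the two mechanisms on display in the $SL_{2}(\R)$ illustration (cocompactness of $N_{\Z}$ in $N$ to show a $\Q$-split flat forces eventual avoidance of the orbit; cocompactness of arithmetic lattices in $\Q$-anisotropic tori to show orbits along anisotropic flats accumulate at $\eta$), and, importantly, you locate the genuine obstacle precisely: the $\R$-split torus underlying the geodesic ray to $\eta$ need not be defined over $\Q$, so the $\Q$-split flat in the conclusion has to be manufactured from the rational structure of a parabolic rather than read off from the ray itself. But that extraction step, which you correctly attribute to the relative $\Q$-root and rational Tits building analysis in [AM], is exactly the part you do not carry out. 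Since it is the technical heart of the theorem, your proposal is a reasonable roadmap, not a proof. Two smaller issues: you assert that if $\eta$ lies on the boundary of a $\Q$-split flat $Sy_{0}$ then $P = \stab_{G}(\eta)$ is a $\Q$-parabolic, which needs an argument, since the one-parameter subgroup generating the ray to $\eta$ need only lie inside $S$, not be $\Q$-defined itself; and in your second half you announce an argument by contraposition but in fact assume the hypothesis of the direct implication.
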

% 
% Thm. \ref{thm: torus} follows immediately from this in some special cases. Indeed, if $A\subset P$ is a maximal $\R$-split torus, and $y\in Y$ is such that $Ay$ is a flat in $Y$, then $\eta$ lies on the boundary of $Ay$ (proof, cf \cite{AM} Lemma 2.11: $\eta$ lies on the boundary of some maximal flat $A'y'$ where $A'\subset P$ is a maximal $\R$-split torus, which is therefore conjugate to $A$ by an element of $P$. But if $gA'g^{-1} = A$, then $g\eta = \eta$ lies on the boundary of $gA'y' = Agy'$). Thus, for $\eta$ as above, $P_{\eta}$ doesn't contain a $\Q$-split torus that is also a maximal $\R$-split torus. But if, for instance, $\rank_{\Q}G = \rank_{\R}G = 1$, then any nontrivial $\Q$-split torus is automatically a maximal $\R$-split torus, which therefore can't be contained in $P$. 

The normalizer $N_{G}(U)$ is not in general a parabolic subgbroup of $G$, but we do have by a construction of Borel and Tits that there exists a parabolic subgroup $P$ of $G$ such that $N_{G}(U)\subset P$ \cite{BT}. Let $P_{\xi} = \xi P \xi^{-1}$, which is again a parabolic subgroup of $G$, and as before let $N^{u}_{\xi} = \xi N_{G}(U)\xi^{-1}$, so that $N^{u}_{\xi} \subset P_{\xi}$. The following claim is the key to proving Theorem \ref{thm: torus}.
\begin{prop}
 Suppose $\xi\in G$ has the property that the $U-$orbit of the point $\Gamma\xi\in X = \X$ is generic with respect to the Haar measure on $G$. Let 
 \begin{itemize}
 \item $P_{\xi}$ be the parabolic subgroup of $G$ defined above, \\
 \item $A\subset P_{\xi}$ be a maximal $\R$-split torus, \\
 \item $y\in Y$ be a point such that $Ay$ is a flat in $Y$, \\
 \item $\{a^{t}\}$ be a nontrivial one-parameter subgroup of $A$ such that $\{a^{t}\}\subset N^{u}_{\xi}$ and $\{a^{t}\}\not\subset \xi C_{G}(U)\xi^{-1}$, \\
 \item $\eta\in Y(\infty)$ be the endpoint of the geodesic ray $\{a^{t}y\}_{t=0}^{\infty}$, and \\
 \item $\eta'\in Y(\infty)$ be the endpoint of the geodesic ray $\{a^{-t}y\}_{t=0}^{\infty}$. 
 \end{itemize}
 Then at least one of $\eta$ or $\eta'$ is a horospherical limit point for $\Gamma$.
\label{prop: generic}
 \end{prop}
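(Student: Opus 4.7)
The plan is to argue by contradiction using the orbit-closure consequence of Ratner's equidistribution theorem (Theorem \ref{thm: equidistribution}). Assume that neither $\eta$ nor $\eta'$ is a horospherical limit point for $\Gamma$. I will exhibit a proper closed $U$-invariant subset of $\Gamma\backslash G$ containing $\Gamma\xi$, which contradicts the density of the $U$-orbit that follows from the $\nu_{G}$-genericity of $\Gamma\xi$.

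The first task is to place $\xi U\xi^{-1}$ inside the unipotent radical of one of the two parabolics $Q_{\eta} := \mathrm{stab}_{G}(\eta)$ or $Q_{\eta'} := \mathrm{stab}_{G}(\eta')$. Since $\{a^{t}\}\subset N^{u}_{\xi}$ but $\{a^{t}\}\not\subset \xi C_{G}(U)\xi^{-1}$, the real character $t\mapsto\chi(\xi^{-1}a^{t}\xi)$ is nontrivial, hence equal to $e^{\lambda t}$ for some $\lambda\neq 0$. The conjugation identity $a^{t}(\xi u^{s}\xi^{-1})a^{-t} = \xi u^{se^{\lambda t}}\xi^{-1}$ forces the Lie algebra line of $\xi U\xi^{-1}$ to be an $\mathrm{Ad}(a^{t})$-eigenline of weight $\lambda$. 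After swapping $\eta\leftrightarrow\eta'$ if needed, assume $\lambda>0$. Then $\xi U\xi^{-1}$ lies in the sum of strictly positive weight spaces for $\{a^{t}\}$, which by the standard root-space description of horospherical subgroups in symmetric spaces coincides with the unipotent radical $N_{\eta}$ of $Q_{\eta}$.

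Next I would promote this to a $\mathbb{Q}$-rational statement using the hypothesis that $\eta$ is not horospherical. By Theorem \ref{thm: AM}, $\eta$ lies on the boundary of some $\mathbb{Q}$-split flat $Sy_{0}$; in particular $Q_{\eta}$ contains the $\mathbb{Q}$-split, $\mathbb{Q}$-defined torus $S$. The $S$-weight decomposition of $\mathfrak{g}$ is then $\mathbb{Q}$-rational, and $\mathfrak{q}_{\eta}$ is the sum of those $S$-weight spaces of nonnegative weight with respect to the $\eta$-direction, so $Q_{\eta}$, and hence $N_{\eta}$, is defined over $\mathbb{Q}$. Borel--Harish-Chandra then yields that $N_{\eta}\cap G_{\mathbb{Z}}$ is a cocompact lattice in $N_{\eta}$, and the commensurability of $\Gamma$ with $G_{\mathbb{Z}}$ (Proposition \ref{prop: commensurable}) upgrades this to $N_{\eta}\cap\Gamma$. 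Setting $H = \xi^{-1}N_{\eta}\xi$, we have $U\subset H$ by the preceding paragraph, $H$ is a proper closed subgroup of $G$ (as $N_{\eta}$ is the unipotent radical of a proper parabolic in the semisimple group $G$), and $\xi H\xi^{-1}\cap\Gamma = N_{\eta}\cap\Gamma$ is a lattice in $\xi H\xi^{-1}$. Therefore $\Gamma N_{\eta}\xi$ is a proper closed $U$-invariant subset of $\Gamma\backslash G$ containing $\Gamma\xi$, contradicting density of the $U$-orbit and completing the proof.

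The main obstacle I foresee is the $\mathbb{Q}$-rationality step: concretely identifying $Q_{\eta}$ as a $\mathbb{Q}$-parabolic from the input that $\eta$ merely lies on the boundary of some $\mathbb{Q}$-split flat. The correspondence between boundary points in $Y(\infty)$ and parabolic subgroups of $G$ is classical, but the rational refinement—that a rational boundary point produces a $\mathbb{Q}$-rational stabilizer, not just one containing a $\mathbb{Q}$-split torus—must be extracted carefully, most naturally by invoking the structure of the rational spherical Tits building or the rational face structure of the Borel--Serre partial compactification.
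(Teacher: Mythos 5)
Your proof is correct, and it takes a genuinely different route from the paper's. The paper proves the proposition directly and geometrically: after reducing to the case $\chi(a) < 1$, it shows $\xi U\xi^{-1}$ lies in the horospherical subgroup $N$ associated to a Weyl chamber of $A$, then uses the explicit description of horoballs at $\eta$ from \cite{AM} (Lemma 2.5) as sets $\mathcal{O}_s = \bigcup_{t>s}NA_\perp a^t y$, and shows that density of the $U$-orbit forces $\mathcal{O}_s$ to intersect $\Gamma y$; crucially, the paper's argument never invokes Theorem \ref{thm: AM} in the proof of Proposition \ref{prop: generic} (that theorem enters only later, in the proof of Theorem \ref{thm: torus}). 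Your argument instead proceeds by contradiction, invoking Theorem \ref{thm: AM} as a black box to extract a $\mathbb{Q}$-split torus $S$ stabilizing $\eta$, showing (correctly, since $\mathfrak{q}_\eta$ is $S$-invariant and hence a sum of the $\mathbb{Q}$-rational $S$-weight spaces) that $Q_\eta$ and its unipotent radical $N_\eta$ are $\mathbb{Q}$-parabolic, and then deducing from commensurability with $G_{\mathbb{Z}}$ that $\Gamma\cap N_\eta$ is a (cocompact) lattice in $N_\eta$, so $\Gamma N_\eta\xi$ is a proper closed $U$-invariant set, contradicting density. What your approach buys is that it sidesteps the technical horoball-parametrization lemma entirely and reduces to a clean ``closed orbit'' argument; what it costs is generality and self-containment --- your proof requires $\Gamma$ to be arithmetic nonuniform and commensurable with $G_{\mathbb{Z}}$ (needed both for Theorem \ref{thm: AM} and for the lattice conclusion), whereas the paper's proof is purely geometric and works for any lattice without touching rationality. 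Since Proposition \ref{prop: generic} is only ever applied in the paper to the noncompact arithmetic case, this restriction does not affect the downstream results, but it does make your version a logically weaker statement than the one in the paper; it also introduces some redundancy, since Theorem \ref{thm: torus} then ends up invoking Theorem \ref{thm: AM} twice (once inside your Proposition \ref{prop: generic} and once explicitly).

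Two small technical points. First, the $\mathbb{Q}$-rationality step you flagged as the ``main obstacle'' does in fact go through cleanly: the key observation is that $S\subset Q_\eta$ (since $S$ stabilizes every point of $Y(\infty)$ on the boundary of the flat $Sy_0$), so $\mathfrak{q}_\eta$ is $\mathrm{Ad}(S)$-invariant and hence a direct sum of a subset of the $S$-weight spaces, each of which is defined over $\mathbb{Q}$; you do not need to identify exactly which weights occur, only that the indexing set is fixed. Second, the cocompactness of $N_\eta\cap G_{\mathbb{Z}}$ in $N_\eta$ is really Malcev's theorem (or follows from the elementary structure theory of unipotent $\mathbb{Q}$-groups) rather than Borel--Harish-Chandra, though the latter can certainly be pressed into service.
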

\begin{proof}
 We must show that either any open horoball in $Y$ based at $\eta$ intersects the orbit $\Gamma y$, or that this holds for any open horoball based at $\eta'$. Since $\{a^{t}\}\subset N^{u}_{\xi}$, we have for our character $\chi$ discussed above 
\[
 a^{t}u_{\xi}a^{-t} = u_{\xi}^{\chi(a)^{t}}.
\]
Observe that since $\{a^{t}\}\not\subset \xi C_{G}(U)\xi^{-1} = C_{G}(\xi U\xi^{-1})$, we have either $\chi(a) < 1$ or $\chi(a) > 1$. We may assume that $\chi(a) < 1$, since otherwise this is true with $a^{-1}$ in place of $a$, and the conclusion of the proposition is invariant with respect to this inversion. Therefore
\begin{equation}
 a^{t}u_{\xi}a^{-t} = u_{\xi}^{\chi(a)^{t}}\to e \text{ as }t\to\infty.
\label{eq:goestoe}
\end{equation}
Let $A^{+}$ denote a Weyl chamber of $A$ containing the ray $\{a^{t}\}_{t=0}^{\infty}$, and let
\[
 N = \{u\in G: c^{k}uc^{-k}\to e\text{ as }k\to\infty\text{ for all }c\text{ in the interior of }A^{+}\}.
\]
$N$ is a maximal unipotent subgroup of $G$, and we claim that $\xi U \xi^{-1}\subset N$. Let $\mathfrak{g}$ be the Lie algebra of $G$, and write $u_{\xi} = \exp\nu$ where $\nu\in\mathfrak{g}$ is nilpotent. Let $\mathfrak{a}\subset\mathfrak{g}$ be the toral subalgebra of $\mathfrak{g}$ corresponding to $A$, and let $\Phi$ be a system of roots of $\mathfrak{g}$ with respect to $\mathfrak{a}$ which defines the Weyl chamber $A^{+}\subset G$ (meaning that $A^{+}$ corresponds to a Weyl chamber of $\mathfrak{a}$ for the root system $\Phi$). Then we have the decomposition
\[
\mathfrak{g}=\mathfrak{g}_{0}\oplus\bigoplus_{\alpha\in\Phi}\mathfrak{g}_{\alpha},
\] 
where $\mathfrak{g}_{\alpha}$ are the root spaces
\[
\mathfrak{g}_{\alpha} = \{X\in\mathfrak{g} : [A,X] = \alpha(A)X \text{ for all } A \in \mathfrak{a}\}
\]
and
\[
\mathfrak{g}_{0} = \{X\in\mathfrak{g} : [A,X] = 0 \text{ for all } A \in \mathfrak{a}\}.
\]
Note that $\mathfrak{a}$ need not be a Cartan subalgebra of $\mathfrak{g}$ as it may not be maximal, but that won't be of concern to us. Using this decomposition, we write
\[
\nu = X_{0} + \sum_{\alpha\in\Phi}X_{\alpha},
\]
where $X_{0}\in\mathfrak{g}_{0},X_{\alpha}\in\mathfrak{g}_{\alpha}$. We have
\begin{align*}
 a^{t}u_{\xi}a^{-t} & = a^{t}\exp(\nu)a^{-t} \\
 & = \exp(a^{t}\nu a^{-t}),
\end{align*}
and we see that
\begin{align*}
a^{t}\nu a^{-t} & = a^{t}X_{0}a^{-t} + \sum_{\alpha\in\Phi}a^{t}X_{\alpha}a^{-t} \\
& = \textrm{Ad}(a^{t})X_{0} + \sum_{\alpha\in\Phi}\textrm{Ad}(a^{t})X_{\alpha}.
\end{align*}
Write $a^{t} = \exp(tA)$ with $A\in\mathfrak{a}^{+}$, the Weyl chamber of $\mathfrak{a}$ corresponding to the Weyl chamber $A^{+}\ni a^{t}$. Since $\textrm{Ad}(\exp Y) = \exp(\textrm{ad}_{Y})$, we see that 
\begin{align*}
\textrm{Ad}(a^{t})X_{0} & = \exp(\textrm{ad}_{tA})X_{0} \\
& = X_{0}
\end{align*}
(since $\textrm{ad}_{tA}X_{0} = 0$) and, for every $\alpha\in\Phi$,
\begin{align*}
\textrm{Ad}(a^{t})X_{\alpha} & = \exp(\textrm{ad}_{tA})X_{\alpha} \\
& = e^{t\alpha(A)}X_{\alpha},
\end{align*}
since $\textrm{ad}_{tA}X_{\alpha} = t\alpha(A)X_{\alpha}$. From (\ref{eq:goestoe}), we have $a^{t}u_{\xi}a^{-t}\to e$, and therefore 
\[
a^{t}\nu a^{-t} = X_{0} + \sum_{\alpha\in\Phi}e^{t\alpha(A)}X_{\alpha}\to 0 \text{ as }t\to\infty.
\]
It follows that $X_{0} = 0$ and $\alpha(A) < 0$ whenever $X_{\alpha}\neq 0$. Since $A$ is an element of the Weyl chamber $\mathfrak{a}^{+}$, it follows that
\[
\left\{\alpha\in\Phi : X_{\alpha}\neq 0\right\}\subset \left\{\alpha\in\Phi : \alpha(C) < 0 \text{ for all } C\in\mathfrak{a}^{+}\right\}.
\]
Thus, if $c	 = \exp C \in G$ is any element of the interior of $A^{+}$, so that $C\in\mathfrak{a}^{+}$, we have
\begin{align*}
c^{k}u_{\xi}c^{-k} & = \exp\sum_{\alpha\in\Phi}\textrm{Ad}(c^{k})X_{\alpha}\\
& =  \exp\sum_{\alpha\in\Phi}e^{k\alpha(C)}X_{\alpha} \to \exp 0 = e \text{ as } k\to\infty.
\end{align*}
It follows that $\xi U\xi^{-1}\subset N$, as claimed. 

Now, let $A_{\perp}$ denote the complement in $A$ of $\{a^{t}\}$ with respect to the Killing form. Then for any $s\in\R_{\geq 0}$, the set
 \[
  \mathcal{O}_{s} = \bigcup_{t > s}NA_{\perp}a^{t}y \subset Y
 \]
is an open horoball based at $\eta$, and any such horoball may be written in this form (\cite{AM} Lemma 2.5).

For any $s\in\R$, the set $V_{s} = \cup_{t > s}N_{\xi}A_{\perp}a^{t}\tilde{y}K\subset G$ is open in $G$, where $\tilde{y}\in G$ is chosen so that its coset in $G/K$ equals $y$. Now, since the $U$ orbit of $\Gamma\xi$ is dense in $\X$, the set $\Gamma\xi U$ is dense in $G$. Therefore, the set $U\xi^{-1}\Gamma$ is also dense in $G$, and so too is $\xi U \xi^{-1}\Gamma\tilde{y}$. Hence, the latter set must intersect the open set $V_{s}$, so there exists some $t > s$ such that
\[
 NA_{\perp}a^{t}\tilde{y}K\cap\xi U \xi^{-1}\Gamma\tilde{y}\neq\emptyset. 
\]
But by the preceding discussion, we have $\xi U \xi^{-1}\subset N$; thus the above implies
\[
  NA_{\perp}a^{t}\tilde{y}K\cap\Gamma\tilde{y}\neq\emptyset,
\]
and since $t > s$, this shows that $\mathcal{O}_{s}\cap\Gamma y \neq\emptyset$, proving the claim.

\end{proof}

We are now in a position to prove Theorem \ref{thm: torus}.

\begin{proof}[Proof of Theorem \ref{thm: torus}]
Suppose $S\subset \xi N_{G}(U)\xi^{-1}$ is a nontrivial $\Q$-split $\Q$-torus, but that $\chi(S)\neq \{1\}$. Then the definition of $\chi$ implies there is a one-parameter subgroup $\{a^{t}\}\subset S$ (so that $\{a^{t}\}\subset N^{u}_{\xi}$) such that $\{a^{t}\}\not\subset \xi C_{G}(U)\xi^{-1}$. Moreover, since $S\subset \xi N_{G}(U)\xi^{-1}\subset P_{\xi}$ and $S$ is a $\mathbb{Q}$-split and thus $\mathbb{R}$-split torus in $G$, and since $P_{\xi}$ is parabolic, there is a maximal $\mathbb{R}$-split torus $A\subset P_{\xi}$ of $G$ such that $S\subset A$, and in particular $\{a^{t}\}\subset A$. Choose $y\in Y$ such that $Ay$ is a flat in $Y$. Let $\eta\in Y(\infty)$ be the endpoint of the geodesic ray $\{a^{t}y\}_{t=0}^{\infty}$, and $\eta'$ the endpoint of $\{a^{-t}y\}_{t=0}^{\infty}$. Then by Proposition \ref{prop: generic}, either $\eta$ or $\eta'$, both of which are on the boundary of the $\mathbb{Q}$-split flat $Sy$, is a horospherical limit point for $\Gamma$. But this contradicts Theorem \ref{thm: AM}.    
\end{proof}

This proves Theorem \ref{thm: character} when $\Gamma$ is irreducible, but we wish to consider all lattices in the statement of Theorem \ref{thm: disjoint}. Thus, suppose that $\Gamma$ is any lattice in $G$. The space $\Gamma\backslash G$ is finitely covered by a homogeneous space of a semisimple group which is totally noncompact and has trivial center; since it clearly suffices to prove Theorem \ref{thm: disjoint} for a cover of $\Gamma\backslash G$, we may assume that $G$ itself has trivial center. Then there is a direct product decomposition $G = G_{1}\times G_{2}\times\cdots\times G_{r}$ such that each $G_{i}$ is semisimple and $\Gamma$ is commensurable to $\Gamma_{1}\times\cdots\times \Gamma_{r}$, where $\Gamma_{i} = \Gamma\cap G_{i}$ is irreducible in $G_{i}$ for each $i$ (\cite{WM2} Proposition 4.28). 

Let $\Gamma' = \Gamma\cap\Gamma_{1}\times\cdots\times \Gamma_{r}$, so that $X' = \Gamma'\backslash G$ is a finite cover of both $X = \Gamma\backslash G$ and of $\Gamma_{1}\backslash G_{1}\times\cdots\times \Gamma_{r}\backslash G_{r}$. Let $\{x_{1}',\dots,x_{m}'\}\subset X'$ be the fiber of $X'\to X$ over $x$. Since the $u$-orbit of $x$ is generic for the $G$-invariant probability measure on $X$, there must exist some $1\leq j\leq m$ such that the $u$ orbit of $x_{j}'$ is generic for the $G$-invariant probability measure on $X'$. The $u$-orbit of the image of this point in $\Gamma_{1}\backslash G_{1}\times\cdots\times \Gamma_{r}\backslash G_{r}$ is therefore generic for $\nu_{G_{1}}\times\cdots\times\nu_{G_{r}}$, using the obvious notation. This yields points $x_{i}\in\Gamma_{i}\backslash G_{i}$ for $i=1,\dots,r$, such that the $u_{i}$-orbit of $x_{i}$ is generic for $\nu_{G_{i}}$, where $u_{i}$ is the projection of $u$ to $G_{i}$.

Since the $u_{i}$-orbit of $x_{i}$ is generic for $\nu_{G_{i}}$, and since $\Gamma_{i}$ is an irreducible lattice in $G_{i}$, we find from Theorem \ref{thm: character} that the set $P_{i}$ of pairs of distinct primes $p\neq q$ for which there exists a nontrivial joining of $(X_{i},u_{i}^{p},\nu_{G_{i}})$ with $(X_{i},u_{i}^{q},\nu_{G_{i}})$, supported on the $(u_{i}^{p},u_{i}^{q})$-orbit of $(x_{i},x_{i})$, is finite for $i=1,\dots,r$.

Suppose that $\nu\neq \nu_{G}\times \nu_{G}$, where $\nu$ is the joining appearing in (\ref{eq: semisimple correlations}). Then there is a nontrivial joining $\nu'$ of $(X',u^{p},\nu_{G})$ with $(X',u^{q},\nu_{G})$ that projects to $\nu$, and whose image on $\Gamma_{1}\backslash G_{1}\times\cdots\times \Gamma_{r}\backslash G_{r}$ is nontrivial and supported on the $u$-orbit of $((x_{1},\dots,x_{r}),(x_{1},\dots,x_{r}))$. There must then exist some $1\leq i\leq r$ such that the projection of this joining to $X_{i}\times X_{i}$ is nontrivial. Hence, the set $P$ of pairs of distinct primes $p\neq q$ for which there exists a nontrivial joining of $(X,u^{p},\nu_{G})$ with $(X,u^{q},\nu_{G})$, supported on the $(u^{p},u^{q})$-orbit of $(x,x)$, is contained in $P_{1}\cup\cdots \cup P_{r}$, and thus must be finite by the above. This proves Theorem \ref{thm: character} for all lattices $\Gamma$, which we now show implies Theorem \ref{thm: disjoint}.

\begin{proof}[Proof of Theorem \ref{thm: disjoint}]
Let $c = \int_{\Gamma\backslash G}f\,\textrm{d}\nu_{G}$. Since
\[
\sum_{n\leq N}\mu(n)f(xu^{n}) = \sum_{n\leq N}\mu(n)(f(xu^{n}) - c) + c\sum_{n\leq N}\mu(n) = \sum_{n\leq N}\mu(n)(f(xu^{n}) - c) + o(N)
\]
by the prime number theorem, we may assume without loss of generality that $c = 0$. By (\ref{eq: semisimple correlations}) and Theorem \ref{thm: character}, we then have for all but finitely many $p\neq q$
\[
\lim_{N\to\infty}\frac{1}{N}\sum_{n\leq N}f(xu^{pn})\overline{f(xu^{qn})} = \left(\int_{\X}f\,\textrm{d}\nu_{G}\right)^{2} = 0,
\]
and Theorem \ref{thm: disjoint} now follows from Theorem \ref{thm: criterion}. 
\end{proof}

\section{The general case}\label{sec: levi}
We now consider all possibilities for the group $H$ appearing in Theorem \ref{thm: equidistribution}, rather than only semisimple ones. Indeed, letting $x = \Gamma\xi$ with $\xi\in G$, this theorem yields a closed subgroup $C\subset G\times G$ such that $\{(u^{pt},u^{qt})\}_{t\in\mathbb{R}}\subset C, \, \Gamma_{C}^{\xi} := \widetilde{\xi}^{-1}(\Gamma\times\Gamma)\widetilde{\xi}\cap C$ is a lattice in $C$ (where $\widetilde{\xi} = (\xi,\xi)\in G\times G$), and 
\begin{equation}
 \lim_{N\to\infty}\frac{1}{N}\sum_{n=0}^{N-1}f(xu^{pn})f(xu^{qn}) = \int_{X\times X}f\otimes f\,\mathrm{d}\nu_{C}
\label{eq: nongeneric correlations}
\end{equation}
for any bounded continuous function $f$ on $X$, where $\nu_{C}$ is a $C$-invariant probability measure supported on $(x,x)C$. At the same time, there exists a closed subgroup $H\subset G$ such that $\{u^{t}\}_{t\in\mathbb{R}}\subset H, \Gamma_{H}^{\xi} := \xi^{-1}\Gamma\xi\cap H$ is a lattice in $H$ and
\[
 \lim_{N\to\infty}\frac{1}{N}\sum_{n=0}^{N-1}f(xu^{n}) = \int_{X}f\,\mathrm{d}\nu_{H}
\]
for any bounded continuous function $f$ on $X$, where $\nu_{H}$ is an $H$-invariant probability measure supported on $xH$. It follows that $\nu_{C}$ is a joining of the systems $(X,u^{p},\nu_{H})$ and $(X,u^{q},\nu_{H})$. Observe that each of these systems is conjugate (in the measure-theoretic sense) to the corresponding system on the smaller homogeneous space $\Gamma_{H}^{\xi}\backslash H$; this is provided by the map 
\begin{gather*}
\Gamma\backslash G \to \Gamma_{H}^{\xi}\backslash H \\
\Gamma \xi h \mapsto \Gamma_{H}^{\xi}h,
\end{gather*}
which is well-defined and defined $\nu_{H}$-almost everywhere, since this measure is supported on $xH$. The image of the measure $\nu_{C}$, which lives on the product $X\times X$, under the product of this conjugacy with itself, is then a joining of the systems $(\Gamma_{H}^{\xi}\backslash H, u^{p}, \nu_{H})$ and $(\Gamma_{H}^{\xi}\backslash H, u^{q}, \nu_{H})$. We will refer to this joining as $\lambda$, and denote $X_{H} := \Gamma_{H}^{\xi}\backslash H$. We will also abuse notation and write $\Gamma_{H}^{\xi} = \Gamma$ from now on. By the second part of Theorem \ref{thm: equidistribution} , we may assume that the systems $(X_{H} , u, \nu_{H})$ and $(X_{H} \times X_{H}, u^{p}\times u^{q},\lambda)$ are ergodic.

Now, since the identity component $H^{0}$ has finite index in $H$, it does no harm to assume that $H$ is connected. Furthermore, on passing to the universal cover, we may assume that $H$ is simply connected as well. This is justified by the obvious fact that the statement of Theorem \ref{thm: main} holds for $\Gamma\backslash H$ as soon as it holds for any topological cover thereof. 
%
%The theorem we're after is the following.
% \begin{thm}
%Let $G$ be a semisimple group, $\Gamma$ a lattice in $G$, $u\in G$ an $Ad$-unipotent element, and $x\in \X$. Let $H\subset G$ be the closed subgroup with respect to which the $u$-orbit of $X$ is homogeneous.Then there exist only finitely many pairs of primes $p\neq q$ such that, letting $C\subset G\times G$ be the closed subgroup with respect to which the $(u^{p}, u^{q})$ orbit of $(x,x)$ is homogeneous, there exists a nontrivial joining of the systems $(X, u^{p}, \nu_{H})$ and $(X, u^{q}, \nu_{H})$ supported on the closed orbit $(x,x)C$.
%\label{thm: joinings}
%\end{thm}

We now consider the real Levi decomposition of the connected, simply connected real Lie group $H$ \cite{PR}: we may write $H$ as a semi-direct product $H = L\rtimes R$, where $L$ is a maximal semisimple subgroup of $H$, and $R$ is its solvable radical (the maximal connected normal solvable subgroup of $H$). $R$ is the kernel of the projection $p : H\to L$, and both $L$ and $R$ are connected and simply connected. We wish to relate the unipotent flow on $\Gamma\backslash H$ to one on a homogeneous space of $L$, to which end we require the following fact.

\begin{prop}[\cite{Sta2},\cite{Wi}]
Suppose that $H$ is a connected Lie group with Levi decomposition $H = L\rtimes R$, and $\Gamma\subset H$ is a lattice such that $\Gamma\backslash H$ supports an ergodic unipotent translation. Then $L$ is totally non-compact (i.e. it has no nontrivial compact factors). 
\end{prop}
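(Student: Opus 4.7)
My plan is to argue by contrapositive: I assume $L$ has a nontrivial compact simple factor and produce a nontrivial measurable factor of $\Gamma\backslash H$ on which $u$ acts trivially, contradicting ergodicity.

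First, decompose $L = L_c \times L_{nc}$, where $L_c$ is the product of the compact simple factors of $L$ and $L_{nc}$ the product of the noncompact ones; both are characteristic in $L$. Since $R$ is normal in $H$ and $L_{nc}$ is normal in $L = H/R$, the preimage $N := L_{nc} R$ of $L_{nc}$ is a closed normal subgroup of $H$, and one obtains a Lie group quotient homomorphism $\pi : H \to L_c$. Next I would verify that $\pi(u) = e$: if $u = \exp v$ with $v$ Ad-nilpotent, then the operator $\operatorname{ad}_{\pi_*v}$ on $\mathfrak{l}_c$ is nilpotent, being a quotient of $\operatorname{ad}_v$. However, $\mathfrak{l}_c$ is of compact type, hence admits an $\operatorname{Ad}(L_c)$-invariant inner product with respect to which every $\operatorname{ad}$ is skew-symmetric, and thus semisimple. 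A simultaneously nilpotent and semisimple operator vanishes, so $\operatorname{ad}_{\pi_*v} = 0$; since $\mathfrak{l}_c$ has trivial center this forces $\pi_*v = 0$, and hence $\pi(u) = e$.

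Next, let $\overline{\pi(\Gamma)} \subset L_c$ denote the closure, a compact subgroup. The assignment $\Gamma h \mapsto \overline{\pi(\Gamma)}\,\pi(h)$ descends to a well-defined continuous surjection $\Gamma\backslash H \to \overline{\pi(\Gamma)}\backslash L_c$, which is $u$-equivariant with trivial action on the target (since $\pi(u) = e$). If this target is nontrivial, then pulling back any non-constant continuous function produces a non-constant $u$-invariant element of $L^2(\Gamma\backslash H)$, contradicting ergodicity. The proof therefore reduces to ruling out the case $\overline{\pi(\Gamma)} = L_c$, i.e.\ to showing that $\Gamma$ cannot project densely onto the compact Levi factor.

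This density question is the main obstacle I expect. Without further input, an irreducible lattice can genuinely project densely onto a compact semisimple factor (arithmetic lattices obtained by restriction of scalars give standard examples). The extra rigidity required must come from combining the ergodicity of the unipotent translation with the reductions on $H$ (connectedness, simple-connectedness) already carried out, and is supplied by the cited structural results of Starkov and Witte-Morris on lattices in mixed Lie groups; I would invoke this as a black box. The remainder of the argument---the Levi-decomposition bookkeeping and the spectral reduction to the density question---is routine.
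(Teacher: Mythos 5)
The paper does not prove this proposition; it is taken as a citation to Starkov's ergodic decomposition paper and to Witte's zero-entropy paper, so there is no in-text argument to compare your proposal against. What follows is therefore an assessment of your proposal on its own terms.

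Your reduction is organized correctly. Passing to $\pi : H \to H/(L_{nc}R) \cong L_c$ is legitimate (you correctly note that $L_c$ itself need not be normal in $H$, but $L_{nc}R$ is), and your argument that $\pi(u) = e$ is clean: the image of an Ad-nilpotent derivation in a compact-type Lie algebra is simultaneously nilpotent and skew with respect to an invariant inner product, hence zero, and the trivial center of $\mathfrak{l}_c$ then forces $\pi_*v = 0$. The spectral step is also fine: if $\overline{\pi(\Gamma)} \subsetneq L_c$, the pushforward of $\nu_H$ to $\overline{\pi(\Gamma)}\backslash L_c$ is the $L_c$-invariant probability measure (the target is a right $L_c$-space and the factor map is $L_c$-equivariant), so it is not a point mass, and pulling back a non-constant function gives a non-constant $u$-invariant $L^2$ function.

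The problem is the step you have flagged as the remaining gap, and I do not think it can be closed as you have set it up. The case $\overline{\pi(\Gamma)} = L_c$ genuinely can occur \emph{together with} ergodicity of the $u$-translation. Take $H = SU(2) \times \widetilde{SL_2(\mathbb{R})}$ (connected and simply connected), $\Gamma$ an irreducible arithmetic lattice projecting densely onto $SU(2)$ (obtained by restriction of scalars from a rank-3 quadratic form over a real quadratic field, anisotropic at one place and of signature $(2,1)$ at the other, and then lifting to the universal cover), and $u = (e, u')$ with $u'$ a nontrivial unipotent in $\widetilde{SL_2(\mathbb{R})}$. Then $\pi(u) = e$ and $\overline{\pi(\Gamma)} = SU(2)$, but the $u$-action on $\Gamma\backslash H$ is ergodic: decomposing $L^2(\Gamma\backslash H)$ into irreducibles $\rho \otimes \sigma$ of $SU(2)\times\widetilde{SL_2(\mathbb{R})}$, Howe--Moore kills $u$-fixed vectors whenever $\sigma$ is nontrivial, while the remaining part $L^2(\Gamma\backslash H)^{\widetilde{SL_2(\mathbb{R})}}$ is identified with the $\pi(\Gamma)$-invariant functions in $L^2(SU(2))$, which are constants precisely because $\pi(\Gamma)$ is dense. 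So the proposition, read as a literal assertion that the Levi factor $L$ of $H$ has no compact factor, seems to fail, and no appeal to the cited references can rescue a proof by contradiction along the lines you propose.

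What the references actually supply, and what the paper presumably intends, is a \emph{reduction}, not an absolute dichotomy: one quotients $H$ by the closed normal subgroup generated by the compact factors (in the example, by $SU(2)$) to put oneself in a situation where the Levi factor is totally non-compact, and then one must argue that this quotient is compatible with the rest of the argument (which is delicate here, since a function $f$ on $\Gamma\backslash H$ need not descend through such a quotient). Your proposal treats the statement as something to be verified rather than as a without-loss-of-generality replacement, and that is the conceptual gap. The skew-symmetry computation and the factor-map spectral argument are correct and will be useful pieces of whatever the right reduction looks like, but the last paragraph of your plan cannot be discharged as written.
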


It follows from this and Wang's theorem (\cite{Sta} Theorem E.11) that the projection of $\Gamma$ into $L$ under $p$ is a lattice $\Gamma_{L}$ in $L$, and $R\cap \Gamma$ is a lattice $\Gamma_{R}$ in $R$. Let $X_{L} = \Gamma_{L}\backslash L$, and let $\nu$ be the image of $\lambda$ under the product projection $X_{H} \times X_{H} \to X_{L}\times X_{L}$.

 $\nu$ is an ergodic joining of the systems $(X_{L}, u_{L}^{p},\nu_{L})$ and $(X_{L}, u_{L}^{q},\nu_{L})$, where $u_{L} = p(u) \in L$ and $\nu_{L}$ is the $L$-invariant Borel probability measure on $X_{L}$. Moreover, $\nu$ is supported on the closure of the $(u_{L}^{p},u^{q}_{L})$-orbit of a point $(x_{L},x_{L})$, where the $u_{L}$ orbit of $x_{L}$ in $X_{L}$ is uniformly distributed with respect to $\nu_{L}$ (indeed, the topological density of the $u$-orbit of $x$ in $X_{H}$ implies the same of its image under the continuous surjection $X_{H}\to X_{L}$). 
 
 It now follows from our analysis in the previous section (specifically, Theorem \ref{thm: character}) that either $\nu = \nu_{L}\times\nu_{L}$, or the pair $(p,q)$ belongs to a finite set determined by the point $x$. The latter case has no effect on the disjointness criterion (Theorem \ref{thm: criterion}), so we may assume in what follows that
 \begin{equation}
 \nu = \nu_{L}\times\nu_{L}.
 \label{eq: trivial}
 \end{equation}
%To see what this implies about the original joining $\lambda$, we divide into cases according to whether $L$ is nontrivial.
% 
%\subsubsection{$L$ is trivial (so $H^{0} = R$ is solvable)}This amounts to the following statement.
% \begin{prop}
% Let $R$ be a connected, solvable Lie group, $u\in R$ an Ad-unipotent element, and $\Gamma\subset R$ a cocompact lattice. Suppose that the action of $u$ on $\Gamma\backslash R$ is ergodic relative to the $R$-invariant probability measure $\nu_{R}$ on $\Gamma\backslash R$. Then the M\"{o}bius function is disjoint from $(\Gamma\backslash R, u)$.
% \label{prop: solvable}
% \end{prop}
% \begin{proof}
% By passing to the universal covering group of $R$ and the pullback of $\Gamma$ thereto, we may assume without harm that $R$ is simply connected. Since $\Gamma\backslash R$ admits an ergodic unipotent flow (namely that induced by $u$), we have by Lemma 4.7 of \cite{FFR} that $R$ is of class (I), meaning that for every $g\in R$, the spectrum of $\text{Ad}g$ is contained in the unit circle. On the other hand, by Theorem 7.1 of \cite{Sta} (also Theorem 4.8 of \cite{FFR}), an ergodic flow on a class (I) compact solvmanifold is differentiablely isomorphic to a nilflow. The result therefore follows from the main theorem of \cite{GT}.
% \end{proof}
% 
%\subsubsection{$L$ is nontrivial} 

Thus, we have an ergodic joining $\lambda$ of $(X_{H}, u^{p}, \nu_{H})$ and $(X_{H}, u^{q}, \nu_{H})$ whose projection to $X_{L}\times  X_{L}$ is the product measure $\nu_{L}\times \nu_{L}$. In order to lift this information to the level of the group $H$, where the Levi decomposition $H = L\rtimes R$ may be directly used, we need the full generality of Ratner's joinings theorem. 

To state this, we require some terminology. Suppose that $H$ a connected Lie group, $\Gamma$ a lattice in $H$, $u_{1}, u_{2}\in H$ unipotent elements, and $\nu$ the $H$-invariant Borel probability measure on $X = \Gamma\backslash H$. Let $\lambda$ be an ergodic algebraic joining of $(X, u_{1}, \nu)$ with $(X, u_{2},\nu)$ and $\Lambda = \stab_{H\times H}(\lambda)$, so that there exists some $x(\lambda)\in X\times X$ such that $\lambda(x(\lambda)\Lambda) = 1$ (by Ratner's measure classification theorem - \cite{R1} Theorem 1). Then the subgroups $\Lambda_{1},\Lambda_{2}\subset H$ defined by 
\begin{equation}
\Lambda_{1} = \{h\in H: (h,e)\in \Lambda\}, \hspace{0.3in} \Lambda_{2} = \{h\in H : (e,h)\in\Lambda\}
\label{eq: subgroups}
\end{equation}
are closed normal subgroups of $H$. For $z\in X$, let
\[
\xi_{\lambda}(z) = \{y \in X : (z,y) \in x(\lambda)\Lambda\}.
\]
This is called the $z$-fiber of $\lambda$.

\begin{thm}[\cite{R1} Theorem 2]
Let all notation and assumptions be as in the above paragraph. Then there is a $c\in H$ and a continuous, surjective homomorphism $\alpha: H \to H/\Lambda_{2}$ with kernel $\Lambda_{1}$ and $\alpha(u_{1}) = u_{2}\Lambda_{2}$, such that
\[
\xi_{\lambda}(\Gamma h) = \left\{\Gamma c\beta_{i}\alpha(h) : i=1,\dots, n\right\}
\]
for all $h\in H$, where the intersection $\Gamma_{0} = \alpha(\Gamma)\cap c\Gamma c^{-1}\Lambda_{2}$ is of finite index in $\alpha(\Gamma)$ and $c\Gamma c^{-1}\Lambda_{2}, n = |\Gamma_{0}\backslash \alpha(\Gamma)|$, and $\alpha(\Gamma) = \{\Gamma_{0}\beta_{i} : i=1,\dots, n\}$. 
\label{thm: joinings}
\end{thm}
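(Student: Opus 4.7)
The plan is to use Ratner's measure classification theorem, which is the deep input already invoked in the setup, as the point of departure, and then extract the fiber formula and the homomorphism $\alpha$ by reading off the algebraic structure of the closed orbit $x(\lambda)\Lambda$. Writing $x(\lambda) = (\Gamma x_{1}, \Gamma x_{2})$ for lifts $x_{1}, x_{2} \in H$, the orbit takes the explicit form
\[
x(\lambda)\Lambda = \{(\Gamma x_{1}h_{1}, \Gamma x_{2}h_{2}) : (h_{1},h_{2})\in\Lambda\},
\]
and $\lambda$ is normalized Haar measure on it. All assertions of the theorem will be derived from the coset geometry of $\Lambda$ modulo the closed normal subgroups $\Lambda_{1}\times\{e\}$ and $\{e\}\times\Lambda_{2}$.

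First I would build $\alpha$. If $(h_{1},h_{2})$ and $(h_{1},h_{2}')$ both lie in $\Lambda$ then $h_{2}^{-1}h_{2}'\in\Lambda_{2}$ by definition, so the assignment $h_{1}\mapsto h_{2}\Lambda_{2}$ defines a continuous homomorphism $\tilde\alpha : \pi_{1}(\Lambda)\to H/\Lambda_{2}$ with kernel exactly $\Lambda_{1}$. The relation $\alpha(u_{1})=u_{2}\Lambda_{2}$ is built in, since $(u_{1},u_{2})\in\Lambda$ by hypothesis. To upgrade $\tilde\alpha$ to a map out of all of $H$, the crucial step is to show $\pi_{1}(\Lambda)=H$ (and symmetrically $\pi_{2}(\Lambda)=H$). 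The marginal condition $(\pi_{1})_{*}\lambda=\nu$ forces the closed set $\Gamma x_{1}\pi_{1}(\Lambda)$ to have full Haar measure in $X$, and since $\lambda$ is genuinely Haar on the algebraic orbit $x(\lambda)\Lambda$ (not merely a measure with the correct marginal), this rules out $\pi_{1}(\Lambda)$ being a proper closed subgroup. Normality of $\Lambda_{1}$ and $\Lambda_{2}$ then follows at once: given $h\in H$, pick $\tilde h\in H$ with $(h,\tilde h)\in\Lambda$, and conjugate $\Lambda_{1}\times\{e\}\subset\Lambda$ to obtain $h\Lambda_{1}h^{-1}\subset\Lambda_{1}$.

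With $\alpha$ in hand, the fiber formula comes from direct calculation. A point $(\Gamma h, \Gamma w)$ belongs to $x(\lambda)\Lambda$ iff there is $(h_{1},h_{2})\in\Lambda$ with $\Gamma x_{1}h_{1}=\Gamma h$ and $\Gamma x_{2}h_{2}=\Gamma w$; the first condition restricts $h_{1}$ to the coset $x_{1}^{-1}\Gamma h$, and the second is then determined up to $\Lambda_{2}$ via $\alpha$. Translating the ambiguity from $x_{1}^{-1}\Gamma h$ through $\alpha$ produces $w \in x_{2}\alpha(x_{1}^{-1}\Gamma h)\cdot\Lambda_{2}$, which after enumerating the finitely many $\Gamma$-cosets on the left yields precisely the claimed form $\{\Gamma c\beta_{i}\alpha(h)\}$, with $c\in H$ governed by the relative lift $x_{1}^{-1}x_{2}$. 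Finiteness of the fiber and the identification $n=|\Gamma_{0}\backslash\alpha(\Gamma)|$ reduce to the fact that both $\alpha(\Gamma)$ and $c\Gamma c^{-1}\Lambda_{2}$ are discrete modulo $\Lambda_{2}$ (since they come from lattices in $H$) and that $x(\lambda)\Lambda$ has finite $\lambda$-volume, forcing the intersection $\Gamma_{0}$ to be of finite index in each.

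The main obstacle is the surjectivity $\pi_{1}(\Lambda)=H$, on which the entire argument pivots. It is the only step where the measure-theoretic content of $\lambda$ feeds back into the algebra: without surjectivity, $\alpha$ is only defined on a proper subgroup, normality of $\Lambda_{i}$ is not automatic, and the fiber formula loses its global meaning. Ratner's measure classification is indispensable here precisely because it guarantees that $\lambda$ is Haar on its algebraic support rather than merely a measure projecting correctly, and it is this Haar property that converts the statement ``$\Gamma x_{1}\pi_{1}(\Lambda)$ has full measure in $X$'' into the clean algebraic conclusion ``$\pi_{1}(\Lambda)=H$.''
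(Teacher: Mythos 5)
The paper offers no proof of this statement: it is quoted verbatim as Theorem 2 of Ratner's 1990 Acta Mathematica paper \cite{R1} and is used purely as a black-box input. There is therefore no internal argument in the paper against which to compare your sketch.

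Taken on its own terms as a reconstruction of Ratner's derivation of her joinings theorem from her measure classification theorem, your outline identifies the right high-level architecture: the algebraic support $x(\lambda)\Lambda$, the projections $\pi_i(\Lambda)$, the normality of $\Lambda_1,\Lambda_2$, the graph homomorphism $\alpha$, and the fiber formula. But the surjectivity step, which you correctly flag as the pivot, is treated too casually, and the way it is phrased hides the real difficulty. You assert that $\Gamma x_1\pi_1(\Lambda)$ is ``the closed set'' and conclude that $\pi_1(\Lambda)$ cannot be ``a proper closed subgroup.'' Neither phrase is right as stated: $\pi_1(\Lambda)$ is the continuous homomorphic image of the closed subgroup $\Lambda$, hence an immersed Lie subgroup of $H$ that need not be closed, and $\Gamma x_1\pi_1(\Lambda)$ is a countable union of translates of that immersed submanifold, not itself a closed set. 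What rescues the conclusion is a dimension count, not closedness: if $\dim\pi_1(\Lambda)<\dim H$, each translate is Haar-null, so the countable union is Haar-null, contradicting the marginal condition $(\pi_1)_*\lambda=\nu$; hence $\pi_1(\Lambda)$ is open, and since $H$ is connected, $\pi_1(\Lambda)=H$. Similarly, the continuity (not mere measurability) of $\alpha$, the identification of the fiber with $\{\Gamma c\beta_i\alpha(h)\}$, and the index count $n=|\Gamma_0\backslash\alpha(\Gamma)|$ require the Lie-theoretic and lattice-theoretic bookkeeping that occupies Ratner's actual proof; your sketch gestures at these but compresses precisely the steps where the content lies.
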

Recall that, in our setting, $\lambda$ is an ergodic joining of $(X_{H}, u^{p},\nu_{H})$, with $(X_{H}, u^{q},\nu_{H})$, and its pushforward to $X_{L}\times X_{L}$ under the map induced by the projection $p\times p: H\times H\to L\times L$ is the product measure $\nu_{L}\times \nu_{L}$. We now process Ratner's description of joinings in the following way. 

\begin{prop}
Let $\Lambda_{1},\Lambda_{2}\subseteq H$ be the normal subgroups defined in eq. (\ref{eq: subgroups}). Then we have $p(\Lambda_{1}) = p(\Lambda_{2}) = L$.
\label{prop: projection}
\end{prop}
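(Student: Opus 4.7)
The plan is to disintegrate $\lambda$ using Ratner's structural description of its support and derive a dimensional constraint forcing $p(\Lambda_i) = L$. The essential input is the hypothesis $(p\times p)_*\lambda = \nu_L\times\nu_L$, which pins down the conditional distributions on $X_L$.

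First, I would invoke Theorem~\ref{thm: joinings} to describe the support of $\lambda$ as a finite-index union of shifts of the graph of the Ratner isomorphism $\alpha : H/\Lambda_1 \cong H/\Lambda_2$. Disintegrating $\lambda$ over the second coordinate, for a generic $x_2 = \Gamma h_2 \in X_H$ the conditional measure $\lambda_{x_2}$ is the normalized Haar measure on a finite union of $\Lambda_1$-cosets in $X_H$, namely the preimages under $\alpha$ of $\alpha(h_2)$ together with its $\beta_i$-translates in $H/\Lambda_2$. Each such coset has dimension $\dim \Lambda_1$.

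Next, I would push the disintegration forward through $\pi_L : X_H \to X_L$. Each $(\pi_L)_*\lambda_{x_2}$ is the Haar measure on a finite union of $p(\Lambda_1)$-cosets in $X_L$, of dimension $\dim p(\Lambda_1)$. To compute the conditional of $p(h_1)$ given $p(h_2) = l_2$, I would average $(\pi_L)_*\lambda_{x_2}$ over $x_2$ in the $R$-fiber above $l_2$. The key point is that $p(R) = \{e\}$, together with the compatibility of the induced Lie isomorphism $\bar\alpha : H/\Lambda_1 \to H/\Lambda_2$ with the characteristic solvable-radical structure, forces the family of $p(\Lambda_1)$-cosets not to smear transversely: as $x_2$ varies over the $R$-fiber, the preimage $h_0^{x_2}$ under $\alpha$ varies only within $p(x_2) R \Lambda_1 = l_2 \Lambda_1$, so $p(h_0^{x_2}) p(\Lambda_1) = l_2 p(\Lambda_1)$ does not depend on $x_2$. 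Hence the averaged measure is Haar on a fixed finite union of $p(\Lambda_1)$-cosets, of dimension at most $\dim p(\Lambda_1)$.

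On the other hand, the hypothesis $(p\times p)_*\lambda = \nu_L\times\nu_L$ forces this conditional to equal $\nu_L$, the full Haar measure on $X_L$, which has dimension $\dim L$. A measure supported on a finite union of $p(\Lambda_1)$-cosets can equal $\nu_L$ only if $\dim p(\Lambda_1) = \dim L$. Since $p(\Lambda_1)$ is an immersed Lie subgroup of the connected Lie group $L$, full dimensionality forces $p(\Lambda_1) = L$. A symmetric argument, disintegrating $\lambda$ over the first coordinate and using the inverse isomorphism $\bar\alpha^{-1}$, yields $p(\Lambda_2) = L$.

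The main obstacle will be rigorously verifying the coset-fixing step: showing that the $p(\Lambda_1)$-coset really does not vary as $x_2$ ranges over the $R$-fiber above $l_2$. This rests on the compatibility between the Ratner isomorphism $\bar\alpha$ and the solvable radical $R$, which must be extracted carefully from the structure of the arbitrary closed normal subgroups $\Lambda_1,\Lambda_2\lhd H$ (including keeping track of the finite-index correction provided by the $\beta_i$). Once this compatibility is secured, the remainder of the argument is a direct dimension count, using only $p(R) = \{e\}$ and the connectedness of $L$.
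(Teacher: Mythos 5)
Your approach is essentially the paper's, and the structure of your argument — Ratner's joinings theorem to describe the support, projection to $X_L$, and a ``fiber is a finite union of $p(\Lambda_i)$-cosets so it must be all of $X_L$'' conclusion — matches the paper's proof closely. The paper also starts by observing $(p\times p)\Lambda = L\times L$ directly from $\supp\lambda = x_0\Lambda$ and $(\pi\times\pi)_*\lambda = \nu_L\times\nu_L$, which is morally the same as your dimension count but packaged set-theoretically rather than via disintegration.

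However, the gap you flag at the end \emph{is} the whole content of the proposition, and as written you leave it open. The coset-fixing claim — that the $L$-projection of the fiber does not vary as you move through an $R$-fiber — is not a detail to be ``extracted carefully''; it is the one genuine structural input. The way the paper closes it is short but not automatic: since $R$ is the solvable radical of $H$ and $\alpha : H \to H/\Lambda_2$ is a continuous surjective homomorphism, the image $\alpha(R)$ is a connected, normal, solvable subgroup of $H_2 := H/\Lambda_2$. Therefore $\alpha(R)$ lies in the radical of $H_2$, and the Levi projection $p_2 : H_2 \to L_2 := L/p(\Lambda_2)$ (note $L_2$ is semisimple) annihilates it. Writing $h = \ell r$ with $\ell \in L$, $r \in R$, one then gets $p_2(\alpha(h)) = p_2(\alpha(\ell))$, so the $L_2$-component of the fiber depends only on the $L$-part of $h$. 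This is your ``cosets do not smear transversely,'' but it should be an explicit lemma, not a stated obstacle; without it the dimension count fails, since an average of lower-dimensional conditional measures over a continuous parameter can certainly have full-dimensional support. Also, your displayed identity $p(x_2)\,R\,\Lambda_1 = l_2\Lambda_1$ does not typecheck ($p(x_2)\in L$, $R\subset H$); the intended statement is the one above about $p_2\circ\alpha$ factoring through $p$. Finally, keep straight which index is which: the fiber of $\lambda$ over the \emph{first} coordinate is a finite union of $\Lambda_2$-cosets (kernel of $\alpha$ in the \emph{second} slot), and vice versa, so when you disintegrate over $x_2$ you are indeed after $p(\Lambda_1)$ — but it is worth double-checking you have not transposed.
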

\begin{proof}
By Ratner's measure classification theorem, we have $\lambda(x_{0}\Lambda) = 1$ for some $x_{0}\in X_{H}\times X_{H}$, and it follows that $(\pi\times\pi)_{*}\lambda((\pi\times\pi)(x_{0}\Lambda)) = 1$. Clearly, $(\pi\times\pi)(x_{0}\Lambda) = ((\pi\times\pi)x_{0})(p\times p)\Lambda$. Since $(\pi\times\pi)_{*}\lambda = \nu_{L}\times \nu_{L}$, it follows that we must have $((\pi\times\pi)x_{0})(p\times p)\Lambda = X_{L}\times X_{L}$, which yields $(p\times p)\Lambda = L\times L$.

Given $h\in H$, we write $\alpha(h) = \widetilde{h}\Lambda_{2}$, where $\alpha: H\to H/\Lambda_{2}$ is the surjective homomorphism of Ratner's joinings theorem above. Since $\Lambda_{2}$ is a closed, normal subgroup of $H$, $H/\Lambda_{2} := H_{2}$ is a connected Lie group in its own right, and we let $p_{2}: H_{2}\to L/p(\Lambda_{2})$ be the projection induced by $p: H\to L$. Observe that we may identify $L/p(\Lambda_{2}) := L_{2}$ as a Levi subgroup of $H_{2}$.

Now, let $h\in H$, and let $x\in\xi_{\lambda}(\Gamma h)$ be a point in the $\Gamma h$-fiber of $\lambda$. Then by Ratner's joinings theorem, there exists an index $i\in \{1,\dots,n\}$ such that $x \in \Gamma c\beta_{i}\alpha(h) = \Gamma c\beta_{i}\widetilde{h}\Lambda_{2}$. It follows that
\[
\pi_{L}(\xi_{\lambda}(\Gamma h)) = \{\pi_{L}(\Gamma c\beta_{i}\widetilde{h}\Lambda_{2}) : i=1,\dots,n\}= \{\Gamma_{L}c_{L}\delta_{i}p_{2}(\widetilde{h}\Lambda_{2}) : i=1,\dots,n\}.
\]
where $c_{L} = \pi_{L}(c)$ and $\delta_{i} = p(\beta_{i})$. Now, observe that $\alpha(R)$ is a connected, normal, solvable subgroup of $H_{2}$, and therefore that $p_{2}$ is trivial on $\alpha(R)$ since $L_{2}$ is semisimple, being a quotient of the semisimple group $L$. It follows that if we let $h = \ell r$ with $\ell\in L$ and $r\in R$, then
\begin{align*}
p_{2}(\alpha(h)) & = p_{2}(\alpha(\ell r))\\
& = p_{2}(\alpha(\ell)\alpha(r)) \\
& = p_{2}(\alpha(\ell)) \\
& = p(\widetilde{\ell})p(\Lambda_{2}).
\end{align*} 
Therefore,
\[
\pi_{L}(\xi_{\lambda}(\Gamma h)) =  \{\Gamma_{L}c_{L}\delta_{i}p(\widetilde{\ell})p(\Lambda_{2}): i=1,\dots,n\}.
\]
On the other hand, we have from above that $(p\times p)(\Lambda) = L\times L$, from which it follows that $\pi_{L}(\xi_{\lambda}(\Gamma h)) = X_{L}$ for any $h\in H$. Indeed, if we let $w\in X_{L}$, then there is some $z\in X_{H}$ such that $(\pi\times\pi)(\Gamma h, z) = (y,w)$ and $(\Gamma h,z)\in\supp(\lambda) = x_{0}\Lambda$. It's then clear that $\pi(z) = w$ and $z\in\xi_{\lambda}(\Gamma h)$. We have shown that
\[
\{\Gamma_{L}c_{L}\delta_{i}p(\widetilde{\ell})p(\Lambda_{2}) : i=1,\dots,n\} = X_{L},
\]
and since $p(\Lambda_{2})$ is a closed subgroup of $L$ while $p(\widetilde{\ell})\in L$ and $n\in\mathbb{N}$ are fixed, it follows that $p(\Lambda_{2}) = L$, as desired (otherwise, we would have written $X_{L}$ as a finite union of lower-dimensional submanifolds). We immediately get the same conclusion for $\Lambda_{1}$, since by Ratner's joinings theorem, the groups $H/\Lambda_{1}$ and $H/\Lambda_{2}$ are isomorphic (under $\alpha$). Hence, the triviality of the Levi factor $L/p(\Lambda_{2})$ of $H_{2}$ implies that $L/p(\Lambda_{1})$ must be trivial as well.

\end{proof}

The following easy lemma will allow us to use this proposition to relate the action of $(u^{p}, u^{q})$ to the action of $(u_{L}^{p}, u_{L}^{q})$, where $u_{L} = p(u)\in L$, and to that of $(u_{R}^{p},u_{R}^{q})$, where $u_{R}\in R$ is the unique element such that $u = u_{L}u_{R}$.

\begin{lem}
Let $H$ be a connected Lie group with Levi decomposition $H = L\rtimes R$. If $N\subset H$ is a closed, connected, normal subgroup such that $p(N) = L$, then $L\subset N$.  
\end{lem}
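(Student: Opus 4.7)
The plan is to exploit the Levi decomposition together with the fact that connected semisimple Lie groups admit no nontrivial solvable quotients. The key intermediate step is to show that $H/N$ is solvable, from which $L\subset N$ will follow immediately.

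First, I would establish that $NR = H$. By the semidirect product structure, any $h\in H$ can be written as $h = \ell r$ with $\ell\in L$ and $r\in R$. The hypothesis $p(N)=L$ guarantees some $n\in N$ with $p(n) = \ell$; writing $n = \ell s$ for some $s\in R$, we obtain $h = n s^{-1} r \in NR$. Since $R$ and $N$ are both normal in $H$, the standard isomorphism theorem then yields
\[
H/N \;=\; NR/N \;\cong\; R/(R\cap N),
\]
which is solvable as a quotient of the solvable radical $R$.

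Next, I would restrict the quotient map $\pi: H \twoheadrightarrow H/N$ to the Levi factor $L$. Its differential is a Lie algebra homomorphism from the semisimple Lie algebra $\mathfrak{l}$ into the solvable Lie algebra of $H/N$. The image is simultaneously a quotient of $\mathfrak{l}$, hence perfect (equal to its own commutator), and a subalgebra of a solvable Lie algebra, hence solvable; it must therefore vanish. Since $L$ is connected, a trivial Lie algebra map forces $\pi|_L$ to be the trivial group homomorphism, whence $L\subset\ker\pi = N$.

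There is no real obstacle to surmount: the lemma is a short consequence of the Levi decomposition combined with the perfectness of semisimple Lie algebras. The only point requiring mild care is the first step, correctly using the hypothesis $p(N)=L$ together with the semidirect decomposition $H = L\ltimes R$ to conclude $NR = H$; everything downstream is then automatic.
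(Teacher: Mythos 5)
Your proof is correct, and it takes a genuinely different route from the paper's. The paper applies a Levi decomposition to $N$ itself, writing $N = S\rtimes Q$, shows by a two-sided containment that $Q = R\cap N$, deduces $\dim S \geq \dim L$ from $p(N)=p(S)=L$, invokes the conjugacy of Levi subgroups to conclude $S$ is itself a Levi subgroup of $H$, and finally uses the normality of $N$ to absorb the conjugate $L$. You instead pass to the quotient: once $NR=H$ is established (which uses the hypothesis $p(N)=L$ exactly as you describe), the second isomorphism theorem gives $H/N\cong R/(R\cap N)$, a solvable group, and the nonexistence of nonzero Lie algebra homomorphisms from a semisimple Lie algebra into a solvable one forces $L\subset\ker(H\to H/N)=N$. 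Your argument is arguably cleaner: it sidesteps the Levi-Malcev conjugacy theorem and the dimension comparison, replacing them with the elementary observation that a quotient of a semisimple Lie algebra is perfect while a subalgebra of a solvable Lie algebra is solvable. Both proofs rely on $N$ being closed so that the relevant quotients and subgroups are Lie groups.
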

\begin{proof}
Since $N$ is closed in $H$, it is a connected Lie group in its own right, and thus admits a Levi decomposition $N = S\rtimes Q$, where $Q$ is the solvable radical of $N$. Observe that $R\cap N$ is a connected, normal, solvable subgroup of $N$, and therefore $R\cap N\subseteq Q$ since $Q$ is the largest such subgroup. At the same time, since $p|_{N} : N\to L$ is surjective, the image $p(Q)\subset L$ is a normal, connected, solvable subgroup of $L$, and therefore must be trivial since $L$ is semisimple. It follows that $Q\subset R =\ker(p)$, and therefore $Q\subseteq R\cap N$. Since we already had the reverse containment, this shows that $Q = R\cap N$.

It follows that $N = S\rtimes (R\cap N)$, with $S$ a Levi subgroup of $N$. Therefore, we have $p(N) = p(S)$, and since $p(N) = L$ by hypothesis, we see that $\dim S \geq \dim L$. Since $L$ is a maximal semisimple subgroup of $H$, and since any two such subgroups are conjugate in $H$, it follows that $S$ too is a Levi subgroup of $H$. Since $N$ is normal in $H$, it follows that $N$ contains all Levi subgroups of $H$, and in particular $L\subset N$. 
\end{proof}

Combining this lemma with the proposition before it, we find that $L\subset \Lambda_{i}$ for $i=1,2$. In particular, we have $u_{L}^{p}\in \Lambda_{1}$, and $u_{L}^{q}\in \Lambda_{2}$. Let $u_{R}^{(p)} = u_{L}^{-p}u^{p}$ and $u_{R}^{(q)} = u_{L}^{-q}u^{q}$. Then $u_{R}^{(p)},u_{R}^{(q)}\in R$, and both are unipotent elements. We find 
\begin{equation}
(u_{L}^{p},u_{L}^{q}), \, (u_{R}^{(p)}, u_{R}^{(q)})\in \Lambda = \stab_{H\times H}\lambda.
\label{eq: coarse levi}
\end{equation}
The presence of $(u_{R}^{(p)},u_{R}^{(q)})$, rather than a pair of genuine powers of $u_{R}$, is unappealing, and can be easily rectified. Indeed, it is easy to see that
\begin{align*}
u_{R}^{(p)} & = \prod_{k=0}^{p-1}u_{L}^{-k}u_{R}u_{L}^{k} \\ 
& = u_{R}^{p}\cdot\prod_{k=0}^{p-1}u_{R}^{-1}u_{L}^{-k}u_{R}u_{L}^{k}.
\end{align*}
Since $\Lambda_{i}$ is normal in $H$ and $L\subset \Lambda_{i}$, we have $u_{L}^{k}\in\Lambda_{i}$ and $u_{R}^{-1}u_{L}^{-k}u_{R}\in\Lambda_{i}$ for all $k\in\mathbb{Z}, i=1,2$. Therefore
\[
\prod_{k=0}^{p-1}u_{R}^{-1}u_{L}^{-k}u_{R}u_{L}^{k}\in\Lambda_{1}\text{ and } \prod_{k=0}^{q-1}u_{R}^{-1}u_{L}^{-k}u_{R}u_{L}^{k}\in\Lambda_{2},
\]
and so we find
\[
\left(\prod_{k=0}^{p-1}u_{R}^{-1}u_{L}^{-k}u_{R}u_{L}^{k}, \prod_{k=0}^{q-1}u_{R}^{-1}u_{L}^{-k}u_{R}u_{L}^{k}\right)\in\Lambda.
\]
But, on combining this with the above expression for $u_{R}^{(p)}$ and (\ref{eq: coarse levi}), we see that we also have
\[
(u_{R}^{p},u_{R}^{q}) = (u_{R}^{(p)},u_{R}^{(q)})\cdot\left(\prod_{k=0}^{p-1}u_{R}^{-1}u_{L}^{-k}u_{R}u_{L}^{k}, \prod_{k=0}^{q-1}u_{R}^{-1}u_{L}^{-k}u_{R}u_{L}^{k}\right)^{-1}\in\Lambda.
\]

We pause to formally record these points.
\begin{prop}
Let all notation be as above for the connected, simply connected Lie group $H$. Then the following holds for all but finitely many pairs of primes $p\neq q$: if $\lambda$ is an ergodic joining of the systems $(X_{H},u^{p},\nu_{H})$ and $(X_{H},u^{q},\nu_{H})$ which is supported on the closure of the $(u^{p},u^{q})$ orbit of $(\Gamma e,\Gamma e)$, then we have both
\[
(u_{L}^{p},u_{L}^{q})\in\stab_{H\times H}\lambda
\]
and
\[
(u_{R}^{p},u_{R}^{q})\in\stab_{H\times H}\lambda.
\]
\label{prop: levi parts}
\end{prop}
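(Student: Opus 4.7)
The plan is to assemble the pieces developed over the preceding discussion into a clean statement, since the heavy lifting has already been done; the proposition is essentially a formal recording of what has been shown. The \emph{all but finitely many} qualifier arises from a single application of Theorem \ref{thm: character} to the projected joining on $X_{L}\times X_{L}$. Since the $u_{L}$-orbit of $x_{L}$ is $\nu_{L}$-generic, being the image of the dense $u$-orbit of $x$ under the continuous surjection $X_{H}\to X_{L}$, the pushforward of $\lambda$ under $\pi\times\pi$ is a joining of $(X_{L}, u_{L}^{p},\nu_{L})$ with $(X_{L}, u_{L}^{q},\nu_{L})$. For all but a finite exceptional set of prime pairs $(p,q)$ determined by $x_{L}$, this pushforward must equal $\nu_{L}\times\nu_{L}$, and throughout the argument we assume $(p,q)$ lies outside this exceptional set.

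Next I would invoke Proposition \ref{prop: projection} to conclude $p(\Lambda_{1}) = p(\Lambda_{2}) = L$, and then apply the lemma immediately preceding the statement (passing to identity components if necessary, using that $L$ is connected so $p(\Lambda_{i}^{0}) = L$ still) to deduce $L\subseteq \Lambda_{i}$ for $i=1,2$. From the definition of $\Lambda_{1}, \Lambda_{2}$ in (\ref{eq: subgroups}), this yields $(u_{L}^{p}, e), (e, u_{L}^{q}) \in \Lambda$, and multiplying these gives $(u_{L}^{p}, u_{L}^{q}) \in \Lambda$, establishing the first half of the conclusion.

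For the $R$-component the subtlety is that $u^{p} \neq u_{L}^{p} u_{R}^{p}$ in the semidirect product $H = L\rtimes R$. Setting $u_{R}^{(p)} := u_{L}^{-p} u^{p} \in R$, the tautology $(u^{p}, u^{q})\in\Lambda$ (by $(u^{p}, u^{q})$-invariance of $\lambda$) combined with $(u_{L}^{-p}, u_{L}^{-q})\in\Lambda$ from the previous step yields $(u_{R}^{(p)}, u_{R}^{(q)}) \in \Lambda$. Expanding
\[
u_{R}^{(p)} = u_{R}^{p}\cdot\prod_{k=0}^{p-1} u_{R}^{-1} u_{L}^{-k} u_{R} u_{L}^{k}
\]
displays the correction factor as a product of commutator-type terms; each such term lies in $\Lambda_{1}$ because $\Lambda_{1}$ is normal in $H$ and contains $L$ by Step 2. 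The analogous statement in $\Lambda_{2}$ for the $q$-version shows the full correction pair lies in $\Lambda$, and dividing it out of $(u_{R}^{(p)}, u_{R}^{(q)})$ yields $(u_{R}^{p}, u_{R}^{q}) \in \Lambda$, completing the proposition.

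The main obstacle is the third step, where the possible non-abelianness of $R$ forces one to track the commutator-type correction terms in the expansion of $u_{R}^{(p)}$ rather than simply factoring $u^{p}$ across the semidirect decomposition. The resolution is precisely the containment $L\subseteq \Lambda_{i}$ established in Step 2: combined with normality of $\Lambda_{i}$ in $H$, it absorbs all the correction terms into the stabilizer, so no further structural input about $R$ is needed.
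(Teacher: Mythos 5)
Your proposal reproduces the paper's argument essentially verbatim: the finite exceptional set comes from Theorem \ref{thm: character} applied to $(\pi\times\pi)_*\lambda$, the containment $L\subseteq\Lambda_i$ comes from Proposition \ref{prop: projection} combined with the lemma, the tautological $(u^p,u^q)\in\Lambda$ yields $(u_R^{(p)},u_R^{(q)})\in\Lambda$, and the commutator expansion of $u_R^{(p)}$ absorbs the correction terms into $\Lambda_1\times\Lambda_2$ exactly as in the paper. Your parenthetical about passing to identity components flags a hypothesis of the lemma that the paper does not explicitly verify, but the assertion that $p(\Lambda_i^0)=L$ ``still'' holds because $L$ is connected is not itself automatic and would need a short justification (for instance: $p(\Lambda_i^0)$ is a connected normal subgroup of $p(\Lambda_i)=L$ of at most countable index, and a connected semisimple group has no proper normal subgroup of countable index whose closure is proper).
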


The short exact sequence
\[
1\xrightarrow{} R \xrightarrow{}H\xrightarrow{p} L\xrightarrow{} 1
\]
gives rise to a fibration
\[
1\xrightarrow{} X_{R}\xrightarrow{} X_{H}\xrightarrow{\pi} X_{L}\xrightarrow{}1,
\]
hence also a fibration
\[
1\xrightarrow{} X_{R}\times X_{R}\xrightarrow{} X_{H}\times X_{H}\xrightarrow{\pi\times\pi} X_{L}\times X_{L}\xrightarrow{}1.
\]

For every $y \in X_{L}$, we may identify the fiber $\mathcal{F}_{y} := \pi^{-1}(\{y\})$ with $X_{R} = \Gamma_{R}\backslash R$: fix once and for all a Borel measurable section $\sigma: X_{L}\to L$ of the projection $L\to X_{L}$, the existence of which is guaranteed by the results of \cite{Mac}. This section need not be differentiable or even continuous, and the reader may rightly worry that this will cause problems in our analysis, since Ratner's theorems only apply as stated to continuous functions, and moreover we will be using aspects of harmonic analysis on the nilmanifold $X_{R}$ that only apply to continuous functions. However, the reason we are safe to work with a section that is only measurable is that we have, at this point, already employed the continuity of $f$ and the genericity of the point $x$ for $\nu_{H}$ to obtain the joining $\lambda$. All of our work will now proceed relative to $\lambda$ and the various Haar measures involved, at which level the distinction between continuity and mere measurability is not harmful.  

That said, the continuity of $f$ (and in fact its differentiablility, which we will assume shortly), will be essential to our arguments involving harmonic analysis, since we will be examining the fine properties of $f$ in the $X_{R}$ fibers mentioned above. The point is that it is only the properties of $f$ in $\nu_{L}$-almost all of these fibers that are relevant, once we take averages against the various pertinent measures. We will be clear in what follows at which points we are exploiting the continuity or differentiability of $f$, versus the ability to restrict our attention to properties that only hold almost everywhere with respect to some invariant measure of interest.  

 We will write $\sigma(y) = \widetilde{y}\in L$ from here on. Then the map 
\begin{align*}
\varphi_{y}: X_{R} & \to\mathcal{F}_{y} \\
\Gamma_{R}r & \mapsto \Gamma r \widetilde{y}
\end{align*}
is clearly well-defined and is a homeomorphism, which explicitly identifies the fibers of $\pi_{L}$ with $X_{R}$.

The following useful fact, due to Witte Morris, will allow us to apply an inductive argument to prove M\"{o}bius disjointness, along the lines of \cite{GT} and \cite{Z}.

\begin{prop}[\cite{Wi} Proposition 2.6]
Suppose that $H$ is a connected Lie group and $\Gamma\subset H$ is a lattice, such that $\Gamma\backslash H$ supports an ergodic unipotent translation. Then $rad H$ is nilpotent.
\label{prop: nilpotent}
\end{prop}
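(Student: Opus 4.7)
The plan is to argue the contrapositive: suppose $R := \operatorname{rad}(H)$ is not nilpotent, and extract a contradiction from the ergodic Ad-unipotent action of $u$ on $\Gamma\backslash H$. Let $N$ denote the nilradical of $H$, which coincides with that of $R$ and is characteristic, hence normal, in $H$. Since $R$ is solvable one has $[R,R]\subseteq N$, so $A := R/N$ is a connected abelian Lie group, non-trivial precisely because $R\neq N$. The quotient $H' := H/N$ inherits a Levi decomposition $H' = L'\ltimes A$, with $L'$ the image of $L$ (and $L\cap N$ a discrete central subgroup of $L$, since it is a normal solvable subgroup of the semisimple group $L$). Building on the discussion preceding this proposition, which gives that $\Gamma\cap R$ is a lattice in $R$, Auslander's theorem on lattices in solvable Lie groups shows $\Gamma\cap N$ is a lattice in $N$; Wang's theorem then implies the image $\Gamma'$ of $\Gamma$ in $H'$ is a lattice. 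Ergodicity descends under the factor map $\Gamma\backslash H\to\Gamma'\backslash H'$, so the image $\bar u$ of $u$ acts ergodically and Ad-unipotently on $\Gamma'\backslash H'$.

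Next I would consider the linear representation $\rho: L'\to\operatorname{GL}(\mathfrak{a})$ coming from the conjugation action of $L'$ on the normal abelian subgroup $A$ of $H'$. Ad-unipotence of $\bar u$ forces $\rho(u_{L})$ to be a unipotent element of $\operatorname{GL}(\mathfrak{a})$, where $u_{L}$ is the projection of $u$ to $L$. Since $\dim\mathfrak{a}\geq 1$, this unipotent map fixes some non-zero vector $v\in\mathfrak{a}$. The natural fibration $\Gamma'\backslash H' \to \Gamma_{L}\backslash L'$ has fibre the torus $\Gamma_{A}\backslash A$, where $\Gamma_{A}$ is the image of $\Gamma\cap R$ in $A$ and is a lattice in $A$ by the above. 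A rational $\rho(u_{L})$-fixed direction in the dual $\mathfrak{a}^{*}$---which exists since the fixed subspace of a rational unipotent matrix is rationally defined---determines a character $\chi$ of the fibre that is preserved by the fibre monodromy $\rho(u_{L})$; pulling $\chi$ back along the fibration should yield a non-constant $\bar u$-invariant measurable function on $\Gamma'\backslash H'$, contradicting ergodicity.

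The main obstacle in implementing this last step is that $\bar u$ does not act on the torus bundle by the fibre monodromy alone: it also translates within the fibres by an $A$-component $\tau$ coming from the $R$-part of $\bar u$, and the character $\chi$ transforms under $\tau$ by an overall phase $\chi(\tau)$ that need not equal $1$. The cleanest workaround is spectral: decompose $L^{2}(\Gamma'\backslash H')$ according to characters of the $A$-action, observe that $\bar u$ permutes the character-isotypic subspaces via the transpose of $\rho(u_{L})$, and note that unipotence of $\rho(u_{L})$ on the dual lattice of $\Gamma_{A}$ forces a finite $\bar u$-invariant orbit of characters. The direct sum of the corresponding finite-dimensional isotypic subspaces is then a finite-dimensional $\bar u$-invariant subspace of $L^{2}$ on which $\bar u$ acts as a unitary matrix; taking a Jordan (or spectral) decomposition yields a non-constant $\bar u$-invariant vector, violating ergodicity.
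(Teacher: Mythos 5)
The paper does not prove this proposition; it is quoted verbatim from Witte \cite{Wi}, Proposition 2.6. So the only question is whether your argument is sound, and it has genuine gaps.

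The most serious problem is the final ``spectral workaround.'' The $\chi$-isotypic subspace $\mathcal{H}_\chi\subset L^{2}(\Gamma'\backslash H')$ you work with is the space of $L^{2}$-sections of a line bundle over the base $\Gamma_{L}'\backslash L'$, and this is infinite-dimensional whenever $L'$ is nontrivial. So ``the direct sum of the corresponding finite-dimensional isotypic subspaces'' is not finite-dimensional, and the Jordan/spectral decomposition step has nothing to operate on. Even setting this aside, an appeal to the spectral theorem on a finite-dimensional invariant subspace would only give you a non-constant \emph{eigenfunction}, and the existence of a non-constant eigenfunction is perfectly compatible with ergodicity (an irrational rotation of the circle has plenty). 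And that is exactly what happens in the extreme case $L'=\{e\}$, $H=R$: your setup reduces to a translation by $\tau=\pi_{A}(\bar u)$ on the torus $T=\Gamma_{A}\backslash A$, which \emph{can} be ergodic if $\tau$ is irrational. You correctly flag $\tau$ as the obstruction but never rule it out, and nothing in your argument does.

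The missing ingredient is the standard structural fact $[\mathfrak{h},\operatorname{rad}\mathfrak{h}]\subseteq\mathfrak{n}$, the nilradical. This has two consequences you do not exploit. First, $A=R/N$ is \emph{central} in $H'=H/N$ and $L'$ acts trivially on $\mathfrak{a}$, so the monodromy $\rho(u_{L})$ you spend effort analyzing is simply the identity; there is no honest ``mixing of fibers'' to worry about, and $A$ is the full abelianization $H'/\overline{[H',H']}$, giving a canonical factor map $\Gamma'\backslash H'\to T$ onto a nontrivial torus on which $\bar u$ acts by translation by $\pi_{A}(\bar u)=\tau$. Second — and this is where Ad-unipotence of $u$ must really enter — one has to show that $\tau$ is trivial (equivalently, that the $R$-component of $u$ lies in $N$). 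This follows from a determinant computation on the weight spaces of $\mathfrak{r}^{\mathbb{C}}$ under the characters of $\mathfrak{a}$: each weight space is $L$-invariant (because $L$ fixes the weights, acting trivially on $\mathfrak{a}$) and the determinant of $\operatorname{Ad}(u_{L})$ on it is a character of the semisimple group $L$, hence trivial; unipotence of $\operatorname{Ad}(u)$ then forces the weight values at $u_{R}$ to vanish, placing $u_{R}$ in $N$. With $\tau=e$, the torus factor $T$ is pointwise fixed by $\bar u$, so $\bar u$ cannot act ergodically, which is the contradiction. Your argument, by restricting attention to unipotence of $\rho(u_{L})$ — a condition that is vacuous — never extracts any constraint on $\tau$, and that is precisely why the contradiction does not materialize.
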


Since we are beginning with the assumption that $\Gamma\backslash H$ supports an ergodic unipotent translation, we see that $R$ is nilpotent. Our argument will proceed by induction on the step $d$ of $R$; thus the base case $d=1$ is that in which $R$ is abelian.

The following version of the Mautner phenomenon, due to Ratner, allows us to reduce to the case in which the action of $u_{R}$ on $X_{R}$ is ergodic.
\begin{prop}[\cite{R2} Proposition 1.5]
Let $H$ be a connected, simply connected Lie group whose radical is nilpotent and which has no compact semisimple factors, and let $\Gamma$ be a lattice in $H$. Let $U$ be a connected unipotent subgroup of $H$ and $N$ the smallest closed connected normal subgroup of $H$ such that $U\subset N$ and $N\cap\Gamma$ is a lattice in $N$.Then $\nu_{H}$-almost every ergodic component of the action of $U$ on $(\Gamma\backslash H, \nu_{H})$ is an $N$-orbit.
\label{prop: ergodic components}
\end{prop}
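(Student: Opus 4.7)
The plan is to combine Ratner's measure classification theorem with the $H$-invariance of $\nu_H$ and a Mautner-type argument in order to identify the $U$-ergodic components of $\nu_H$ with $N$-orbits. First I would decompose $\nu_H = \int_{\Gamma\backslash H}\mu_x \, d\nu_H(x)$ into $U$-ergodic components. By Ratner's measure classification theorem applied to the $U$-action on $(\Gamma\backslash H,\nu_H)$, each $\mu_x$ is algebraic: there is a closed connected subgroup $H_x\subseteq H$ containing $U$ with $x^{-1}\Gamma x\cap H_x$ a lattice in $H_x$, such that $\mu_x$ is the $H_x$-invariant probability measure on $xH_x$. Comparing supports before and after right translation shows that $H_x$ is exactly the stabilizer of $\mu_x$ under the right translation action of $H$ on probability measures.

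Next I would exploit the $H$-invariance of $\nu_H$: for every $h\in H$ the pushforward $(R_h)_*\nu_H = \nu_H$, so by essential uniqueness of the ergodic decomposition $(R_h)_*\mu_x = \mu_{xh}$ for $\nu_H$-almost every $x$, and equating stabilizers yields $H_{xh} = h^{-1}H_xh$ almost everywhere. Since $U\subseteq H_{xh}$ for every $h$, this forces $hUh^{-1}\subseteq H_x$ for all $h$, so $H_x$ contains the normal closure of $U$ in $H$. Combining this with the countability (due to Ratner) of the possible subgroups $H_x$ arising in such decompositions, together with the transitivity of the $H$-action on $\Gamma\backslash H$, almost all $H_x$ fall into a single conjugacy class, and from this one can extract a closed connected normal subgroup $N^*\subseteq H_x$ of $H$ with $U\subseteq N^*$ and $N^*\cap\Gamma$ a lattice in $N^*$. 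Here the nilpotency of $\operatorname{rad} H$ enters, via Wang's theorem (as invoked earlier in the paper), to ensure that the lattice intersection property persists through these constructions. The minimality of $N$ then yields $N\subseteq N^*\subseteq H_x$ for $\nu_H$-a.e.~$x$.

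For the reverse containment, I would use that $N$ being closed, connected, normal with $N\cap\Gamma$ a lattice gives a foliation of $\Gamma\backslash H$ by $N$-orbits, each carrying a natural $N$-invariant probability measure, and that the $U$-action preserves this foliation since $U\subseteq N$. It then suffices to show that $U$ acts ergodically on $\nu_{\Gamma N\backslash H}$-almost every $N$-orbit. If this failed, applying Ratner's measure classification inside the $N$-orbit would produce a closed connected subgroup $H'\subsetneq N$ containing $U$ with the appropriate lattice condition; then running the promotion argument from the previous paragraph on $H'$ would yield a proper closed connected normal subgroup of $H$ containing $U$ with a lattice intersection, contradicting the minimality of $N$.

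The main obstacle is the step in the second and third paragraphs of promoting a locally algebraic subgroup with the lattice property to a closed connected normal subgroup of $H$ still carrying the lattice property. This is the substantive content of the Mautner phenomenon in this setting, and it depends essentially on the hypotheses that $\operatorname{rad} H$ is nilpotent and $H$ has no compact semisimple factors — without these, the intersections with $\Gamma$ needed to maintain lattice structure under formation of normal closures need not behave well, and the correspondence between ergodic components and orbits of a single normal subgroup can break down.
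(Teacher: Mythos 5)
The paper does not prove this statement; it is imported verbatim from Ratner's paper \cite{R2} (Proposition 1.5), so there is no proof in the source to compare against. Evaluating your argument on its own terms: the opening moves are sound and well-organized. Decomposing $\nu_H$ into $U$-ergodic components, invoking Ratner's measure classification to make each component algebraic with stabilizer $H_x$, and then using $H$-invariance of $\nu_H$ together with essential uniqueness of the ergodic decomposition to get $H_{xh} = h^{-1}H_x h$ a.e., so that a Fubini argument over a countable dense set of $h$ yields $hUh^{-1}\subseteq H_x$ for a.e.~$x$ and every $h$ --- all of this is correct, and it does show that a.e.~$H_x$ contains the closure of the normal subgroup generated by $U$.

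The genuine gap is exactly where you point at it but do not fill it: passing from ``$H_x$ contains the normal closure $\overline{\langle hUh^{-1}\rangle}$'' to ``$H_x$ contains $N$.'' The group $N$ is defined by a lattice condition that the normal closure of $U$ need not satisfy, so minimality of $N$ gives you nothing until you have produced a closed connected \emph{normal} subgroup sitting inside $H_x$ that \emph{does} intersect $\Gamma$ in a lattice. Your invocation of Wang's theorem here does not do the job: as used earlier in the paper, Wang's theorem controls the projection of a lattice to a Levi factor, not the lattice property of normal closures of unipotents. The content you are missing is precisely the Mautner/Moore-type phenomenon for Lie groups with nilpotent radical and no compact semisimple factors --- that a $U$-invariant vector in $L^2(\Gamma\backslash H)$ is automatically invariant under a closed connected normal subgroup with the lattice property, and that the minimal such subgroup is $N$. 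Without supplying that argument (or a substitute), the chain $N\subseteq N^*\subseteq H_x$ is asserted rather than proved. The reverse containment $H_x\subseteq N$ has the same difficulty, since ``running the promotion argument on $H'$'' inherits the same unjustified step. In short: correct skeleton and correct diagnosis of where the hard point is, but the crucial step carrying the hypotheses on $\mathrm{rad}\,H$ and the compact factors is left as a gesture rather than a proof.
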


Taking $U = \{\exp(tv_{R}):t\in\mathbb{R}\}$, where $v_{R} = \log{u_{R}}$, the ergodic components of the action of $U$ on $(X_{H},\nu_{H})$ agree with those of the action of $u_{R}$. Since $u_{R}\in R$, $R$ is a closed, connected, normal subgroup of $H$ by construction, and $\Gamma\cap R$ is a lattice in $R$ by the above remarks, we see that $N\subset R$, where $N$ is the subgroup of the proposition. Replacing $H$ with the smaller group $L\rtimes N$, we may assume that $\nu_{H}$-a.e. ergodic component of the action of $u_{R}$ on $(X_{H},\nu_{H})$ is an $R$-orbit. In particular, we may assume that the action of $u_{R}$ on $X_{R}$ is ergodic (for the Haar measure $\nu_{R}$), and that the ergodic decomposition of $\nu_{H}$ for the $u_{R}$ action agrees with the disintegration of $\nu_{H}$ over $\pi: X_{H}\to X_{L}$:
\[
\nu_{H} = \int_{X_{L}}\nu_{y}\,\textrm{d}\nu_{L}(y),
\]
where $\nu_{y}$ is supported on $\mathcal{F}_{y}$ and satisfies $(\varphi_{y}^{-1})_{*}\nu_{y} = \nu_{R}$, for $\nu_{L}$-a.e. $y\in X_{L}$.

This ergodicity now implies the following.
\begin{prop}
For $\nu_{L}$-a.e. $y\in X_{L}$, the action of $\y u_{R}\y^{-1}$ on $X_{R}$ is ergodic relative to $\nu_{R}$, and therefore uniquely and totally ergodic.
\label{prop: haar ergodic} 
\end{prop}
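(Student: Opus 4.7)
The plan is to transfer the ergodicity of $u_R$ on $(X_H,\nu_H)$ to an almost-everywhere ergodicity statement on the fibers of $\pi:X_H\to X_L$, and then invoke the rigidity of unipotent translations on compact nilmanifolds to upgrade ergodicity to unique and total ergodicity.

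First, I would check that under the homeomorphism $\varphi_y:X_R\to \mathcal{F}_y$, the $u_R$-action on $\mathcal{F}_y$ is conjugate to right translation by $\widetilde{y}u_R\widetilde{y}^{-1}$ on $X_R$. Using the fact that $R$ is normal in $H$,
\[
\varphi_y(\Gamma_R r)\cdot u_R=\Gamma r\widetilde{y}u_R=\Gamma r(\widetilde{y}u_R\widetilde{y}^{-1})\widetilde{y}=\varphi_y\bigl(\Gamma_R r\cdot \widetilde{y}u_R\widetilde{y}^{-1}\bigr),
\]
and $\widetilde{y}u_R\widetilde{y}^{-1}\in R$ is unipotent, being an $H$-conjugate of the unipotent element $u_R$. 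By the reduction made in the paragraph preceding the proposition, the ergodic decomposition of $(X_H,u_R,\nu_H)$ coincides with the disintegration $\nu_H=\int_{X_L}\nu_y\,\mathrm{d}\nu_L(y)$; hence $\nu_y$ is $u_R$-ergodic on $\mathcal{F}_y$ for $\nu_L$-a.e.\ $y$. Transporting via $\varphi_y$ and using $(\varphi_y^{-1})_{*}\nu_y=\nu_R$, I conclude that for $\nu_L$-a.e.\ $y\in X_L$ the translation action of $\widetilde{y}u_R\widetilde{y}^{-1}$ on $(X_R,\nu_R)$ is ergodic.

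Finally, since $R$ is nilpotent by Proposition \ref{prop: nilpotent}, $X_R$ is a compact nilmanifold and $\widetilde{y}u_R\widetilde{y}^{-1}$ acts on it as a unipotent translation. Applying Theorem \ref{thm: equidistribution} to this action, the fact that $\nu_R$ is an ergodic invariant probability measure of full support forces every orbit to equidistribute with respect to $\nu_R$, which is unique ergodicity. Total ergodicity then follows because, for unipotent translations on a nilmanifold, ergodicity is equivalent to ergodicity of the induced rotation on the maximal torus factor, a condition obviously inherited by all nonzero powers. The only mildly delicate point in the proposal is the matching of the $u_R$-ergodic components with the fibers of $\pi$, and I expect this to be the main obstacle in a full proof, but it has already been secured by the reduction to $H=L\rtimes N$ carried out just before the statement, so the remainder is a direct transfer together with a standard invocation of nilmanifold theory.
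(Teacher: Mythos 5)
Your route is correct but genuinely different from the paper's, and there are a couple of places where you should be a bit more careful.

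The paper does not use the ergodic decomposition at all to prove this proposition. Instead it invokes Leon Green's theorem to reduce to the horizontal torus $X_R/[R,R]$ and then argues directly: for each nontrivial character $\chi$, the set $E_\chi = \{\ell \in L : \chi(\ell u_R \ell^{-1}) = 1\}$ is a proper analytic subvariety of $L$ (proper because $u_R$ itself acts ergodically, so $e \notin E_\chi$), hence Haar-null; the union $E = \bigcup_\chi E_\chi$ is a countable union of null sets, so the set of bad $\ell$, and thus of bad $y$, has measure zero. This is a self-contained, essentially elementary argument that does not depend on identifying the ergodic decomposition with the fiberwise disintegration. Your argument instead leans on that identification, which the preceding paragraph \emph{asserts} but which is really equivalent to the ergodicity claim you are trying to prove. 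To avoid circularity you would need to justify it independently, which can be done: Proposition \ref{prop: ergodic components} tells you $\nu_H$-a.e.\ ergodic component is a full $R$-orbit, and then Ratner's measure classification (applied within each orbit) pins the ergodic measure on each such orbit down as the Haar measure $\nu_y$; by uniqueness of disintegration the two decompositions therefore agree, and your transport via $\varphi_y$ (which is correct and is equation (\ref{eq: fiber action}) of the paper) delivers ergodicity for a.e.\ $y$. You should make that intermediate step explicit rather than treating the "agrees with the disintegration" sentence as already established, since otherwise the argument appears to assume the conclusion.

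Two smaller points. For unique ergodicity, appealing to Theorem \ref{thm: equidistribution} is not quite enough on its own: that theorem only guarantees equidistribution of each orbit in its own closure, and you still need to know all orbit closures are $X_R$. The efficient route, and the one the paper takes, is to cite the classical Auslander--Green--Hahn result that an ergodic nilrotation is automatically uniquely (and totally) ergodic. Your total-ergodicity argument via the horizontal torus is fine: if $1, \alpha_1, \dots, \alpha_k$ are $\Q$-linearly independent then so are $1, n\alpha_1, \dots, n\alpha_k$ for $n \neq 0$, so ergodicity passes to powers. In sum: the conjugation identity and the overall strategy are right, but you should fill the Ratner-classification step to avoid circularity, and replace the Theorem \ref{thm: equidistribution} appeal with the standard nilrotation fact.
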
 
\begin{proof}
By Proposition \ref{prop: nilpotent}, $R$ is nilpotent. Thus we may apply a theorem of Leon Green (\cite{AGH}), which asserts that a rotation on a nilmanifold is ergodic if and only if its projection to the associated horizontal torus is ergodic. Thus, it suffices to assume that $X_{R}$ is a torus. Let 
\[
E = \left\{\ell\in L :\text{the action of $\ell u_{R}\ell^{-1}$ on $(X_{R},\nu_{R})$ is not ergodic}\right\}.
\]
By the ergodicity criterion for a rotation on a torus, we have
\[
E = \bigcup_{\substack{\chi\in\widehat{X}_{R} \\ \chi\neq \chi_{0}}}E_{\chi}
\]
where
\[
E_{\chi} = \left\{\ell\in L : \chi(\ell u_{R}\ell^{-1}) = 1\right\}
\]
and $\chi_{0}$ is the trivial character. Observe that $E_{\chi}$ is an analytic subvariety of $L$. Moreover, no $E_{\chi}$ contains the identity element of $L$, since $u_{R}$ acts ergodically. Therefore, each $E_{\chi}$ is a \emph{proper} analytic subvariety of $L$, and thus has Haar measure 0 in $L$. Since $\widehat{X_{R}}$ is countable, it follows that the Haar measure of $E$ is 0 as well. But it's then immediate that
\[
\nu_{L}\left(\left\{y\in X_{L} : \text{the action of $\y u_{R}\y^{-1}$ on $(X_{R},\nu_{R})$ is not ergodic}\right\}\right) = 0,
\]
as desired.

Finally, the statements of unique and total ergodicity are true in general for ergodic nilrotations (cf. \cite{Fra} page 10).
\end{proof}

Notice that for any $r,r' \in R$ we have 
\begin{equation}
\varphi_{y}(\Gamma_{R} rr') = \Gamma rr'\widetilde{y} = \Gamma r\y (\y^{-1}r'\y) = \varphi_{y}(\Gamma_{R}r)\y^{-1}r'\y,
\label{eq: equivariance}
\end{equation}
and $\y^{-1}r\y\in R$ since $R$ is normal in $H$. 

Now observe that for any point $y\in X_{L}$ and any $r\in R$, we have
% $\Gamma r \y u^{p} = \Gamma r(\y u^{p}u_{L}^{-p}\y^{-1})\y u_{L}^{p}$, and $\y u^{p}u_{L}^{-p}\y^{-1}\in R$ because $p(u) = u_{L} = p(u_{L})$ and $R=\ker p$. Since this point clearly belongs to the fiber of $\pi$ over $yu_{L}^{p}$, we see that
\begin{equation}
\varphi_{y}^{-1}\left(\Gamma r\y u_{R}^{p}\right) = \varphi_{y}^{-1}\left(\Gamma r\left(\y u_{R}^{p} \y^{-1}\right)\y\right) = \Gamma_{R} r \left(\y u_{R}^{p} \y^{-1}\right).
\label{eq: fiber action}
\end{equation}

Let
\begin{equation}
\lambda = \int_{X_{L}\times X_{L}}\lambda_{(y,z)}\,\textrm{d}(\nu_{L}\times\nu_{L})(y,z)
\label{eq: disintegration}
\end{equation}
be the disintegration of $\lambda$ with respect to $\pi\times\pi: X_{H}\times X_{H}\to X_{L}\times X_{L}$, where we've used the assumption that $(\pi\times\pi)_{*}\lambda = \nu_{L}\times\nu_{L}$. For $\nu_{L}\times\nu_{L}$-almost every pair $(y,z)\in X_{L}\times X_{L}, \lambda_{(y,z)}$ is a Borel probability measure on $X_{H}\times X_{H}$ supported on the fiber $\mathcal{F}_{(y,z)} = (\pi\times\pi)^{-1}\{(y,z)\} = \mathcal{F}_{y}\times\mathcal{F}_{z}$. 

Observe that $(u^{p}, u^{q})$ maps $\mathcal{F}_{(y,z)}$ to $\mathcal{F}_{(yu_{L}^{p}, zu_{L}^{q})}$. Since $\lambda$ is $(u^{p}, u^{q})$-invariant, we have for any $(y,z)\in X_{L}\times X_{L}$
\begin{equation}
(u^{p},u^{q})_{*}\lambda_{(y,z)} = \lambda_{(yu_{L}^{p}, zu_{L}^{q})}. 
\label{eq: disintegration equivariance}
\end{equation}

We showed above (Proposition \ref{prop: levi parts}) that the Levi components $(u_{L}^{p},u_{L}^{q})$ and $(u_{R}^{p}, u_{R}^{q})$ also belong to $\Lambda$, the stabilizer of $\lambda$. Since $R$ acts trivially on $X_{L}$, we therefore have
\[
\left(u_{R}^{p},u_{R}^{q}\right)_{*}\lambda_{(y,z)} = \lambda_{(y,z)}
\]
for $(\nu_{L}\times\nu_{L})$-a.e. $(y,z)$, so almost every $\lambda_{(y,z)}$ is invariant under the action of $(u_{R}^{p}, u_{R}^{q})$ on $\mathcal{F}_{(y,z)}$. It follows from (\ref{eq: fiber action}) that the image measure $\rho_{(y,z)} = (\varphi_{(y,z)}^{-1})_{*}\lambda_{(y,z)}$ on $X_{R}\times X_{R}$ is invariant for the action of $\left(\y u_{R}^{p}\y^{-1},\z u_{R}^{q}\z^{-1}\right)$. 

We now show that we may reduce to the case that almost every $\rho_{(y,z)}$ is \emph{ergodic} for this action. Naturally, we would like to demonstrate this by means of the ergodic decomposition of each of the $(u_{R}^{p},u_{R}^{q})$-invariant measures $\lambda_{(y,z)}$. The problem with this idea is that the measurable partition of $X_{H}\times X_{H}$ to which this decomposition gives rise a priori depends on $(y,z)$, creating complications in the order of logic should we attempt to average with respect to $\lambda$. However, this $(y,z)$ dependence may be removed in the following way. 

Since $\lambda$ is ergodic with respect to the action of $(u^{p},u^{q})$, it is algebraic by Ratner's measure classification theorem, and thus the probability space $(X_{H}\times X_{H},\lambda)$ may be identified with the homogeneous space $(X_{\Lambda},\nu_{\Lambda})$, where $\Lambda = \stab_{H\times H}\lambda$ as above, $X_{\Lambda} = (\Lambda\cap\Gamma\times\Gamma)\backslash\Lambda$, and $\nu_{\Lambda}$ is a $\Lambda$-invariant Borel probability measure on $X_{\Lambda}$. Since $(u_{R}^{p},u_{R}^{q})\in\Lambda$ by Proposition \ref{prop: levi parts}, we have by Proposition \ref{prop: ergodic components} that $\lambda$-a.e. ergodic component of the action of $(u_{R}^{p},u_{R}^{q})$ on $(X_{H}\times X_{H},\lambda)$ is an $N$-orbit, where $N$ is the smallest closed connected normal subgroup of $\Lambda$ such that $(u_{R}^{p},u_{R}^{q})\in N$ and $N\cap\Gamma\times\Gamma$ is a lattice in $N$. 

We clearly have $N\subset\Lambda\cap R\times R$; thus, if $(x_{1},x_{2})\in X_{H}\times X_{H}$, then the $N$-orbit $(x_{1},x_{2})N$ is contained in the fiber $\mathcal{F}_{(y_{1},y_{2})}$ where $y_{i} = \pi_{L}(x_{i})$. It follows that the measurable partition determined by the ergodic decomposition of each $\lambda_{(y,z)}$ is given by the partition of $X_{H}\times X_{H}$ into $N$-orbits, which does not depend on $(y,z)$ (note, however, that the relevant ergodic measures on the atoms of this partition do vary with $\lambda_{(y,z)}$).

To summarize, there is a measurable partition $\xi = \{C(x,x') : (x,x')\in X_{H}\times X_{H} \}$ of $X_{H}\times X_{H}$ into $(u_{R}^{p},u_{R}^{q})$-invariant subsets $C(x,x')\subset X_{H}\times X_{H}, (x,x')\in C(x,x')$, a probability measure $P$ on $X_{H}\times X_{H}/\xi$, and ergodic $(u_{R}^{p},u_{R}^{q})$-invariant Borel probability measures $\{\lambda_{(y,z)}^{C}: C\in\xi, (y,z)\in X_{L}\times X_{L}\}$ such that $\lambda_{(y,z)}^{C}$ is supported on the fiber $\mathcal{F}_{(y,z)}$ and 

%\[
%\lambda_{(y,z)}(E) = \int_{X_{H}\times X_{H}/\xi}\lambda_{(y,z)}^{C}(E\cap C)\,\textrm{d}P(C)
%\]
%for every measurable subset $E\subset X_{H}\times X_{H}$. It follows that

%, where $P$ is the probability measure on $X_{H}\times X_{H}/\xi$ induced by $\rho_{(y,z)}$, and 
%\[
%\lambda_{(\Gamma u_{L}^{p},\Gamma u_{L}^{q})}(E) = \int_{X_{H}\times X_{H}/\xi}(u_{L}^{p},u_{L}^{q})_{*}\lambda_{(\Gamma e,\Gamma e)}^{C}(E\cap C)\,\textrm{d}P(C),
%\] 
%which yields the ergodic decomposiiton of $\lambda_{(y,z)}$ for $\nu_{L}\times\nu_{L}$-a.e. $(y,z)$ by the ergodicity of $(u_{L}^{p}, u_{L}^{q})$. It follows that
\begin{align}
\int_{X_{H}\times X_{H}}f\otimes f\,\textrm{d}\lambda & = \int_{X_{L}\times X_{L}}\int_{X_{H}\times X_{H}}f\otimes f\,\textrm{d}\lambda_{(y,z)}\,\textrm{d}(\nu_{L}\times\nu_{L})(y,z) \nonumber \\
%& =  \int_{X_{L}\times X_{L}}\int_{X_{H}\times X_{H}/\xi}\int_{X_{H}\times X_{H}}f\otimes f\,\textrm{d}\lambda_{(y,z)}^{C}\,\textrm{d}P(C)\,\textrm{d}(\nu_{L}\times\nu_{L})(y,z) \nonumber \\
& = \int_{X_{H}\times X_{H}/\xi}\int_{X_{L}\times X_{L}}\int_{X_{H}\times X_{H}}f\otimes f\,\textrm{d}\lambda_{(y,z)}^{C}\,\textrm{d}(\nu_{L}\times\nu_{L})(y,z)\,\textrm{d}P(C).
\label{eq: decomposition}
\end{align}
%This freedom to change the order of integration is contingent on the independence of the partition $\xi$ from $(y,z)$, which is the reason for our arguments above.

Our proof of M\"{o}bius disjointness in the case that $R$ is abelain will make the crucial assumption that $f$ is in $C^{1}(X_{H})$. This causes no loss of generality at the level of our proof of Theorem \ref{thm: main}, since it should be clear (and we will show) that we can prove this theorem for arbitrary continuous functions once we have it for differentiable functions, by uniformly approximating the former class by the latter. This differentiability will enter our analysis by allowing us to control the decay of Fourier coefficients when we use harmonic analysis in the fibers of $\pi_{L}\times\pi_{L}$ (which, we recall from above, have been explicitly identified with $X_{R}\times X_{R}$). 

%Our proof will show the following (see (\ref{eq: final})): there exists a constant $C_{f} > 0$ depending only on $f$ such that we have for the inner double integral in the above expression
%\[
%\left|\int_{X_{L}\times X_{L}}\int_{X_{H}\times X_{H}}f\otimes f\,\textrm{d}\lambda_{(y,z)}^{C}\,\textrm{d}(\nu_{L}\times\nu_{L})(y,z)\right| \leq \frac{1}{pq}C_{f}
%\]
%for $P$-almost every $C\in\xi$. From this and (\ref{eq: decomposition}), we get
%\begin{align*}
%\left|\int_{X_{H}\times X_{H}}f\otimes f\,\textrm{d}\lambda\right| & = \left|\int_{X_{H}\times X_{H}/\xi}\int_{X_{L}\times X_{L}}\int_{X_{H}\times X_{H}}f\otimes f\,\textrm{d}\lambda_{(y,z)}^{C}\,\textrm{d}(\nu_{L}\times\nu_{L})(y,z)\,\textrm{d}P(C)\right| \\
%& \leq \int_{X_{H}\times X_{H}/\xi}\left|\int_{X_{L}\times X_{L}}\int_{X_{H}\times X_{H}}f\otimes f\,\textrm{d}\lambda_{(y,z)}^{C}\,\textrm{d}(\nu_{L}\times\nu_{L})(y,z)\right|\,\textrm{d}P(C) \\
%& \leq \int_{X_{H}\times X_{H}/\xi}\frac{1}{pq}C_{f}\,\textrm{d}P(C) \\
%& = \frac{1}{pq}C_{f},
%\end{align*}
%where we've used the triangle inequality and the fact that $P$ is a probability measure. This inequality yields M\"{o}bius disjointness by Theorem \ref{thm: criterion}.

Observe that since the fiber measure $\lambda^{C}_{(y,z)}$ is ergodic for $(u_{R}^{p},u_{R}^{q})$ for $\nu_{L}\times\nu_{L}$-a.e. $(y,z)\in X_{L}\times X_{L}$ and $P$-a.e. $C\in X_{H}\times X_{H}/\xi$, (\ref{eq: fiber action}) implies that $\rho^{C}_{(y,z)} = (\varphi_{y}^{-1}\times\varphi_{z}^{-1})_{*}\lambda^{C}_{(y,z)}$ is ergodic for $(\y u_{R}^{p}\y^{-1}, \z u_{R}^{q}\z^{-1})$ (with the same caveats on $(y,z)$ and $C$).

Given a continuous, bounded function $f\in C_{b}(X_{H})$ and $y\in X_{L}$, we may define a continuous, bounded function $f_{y}$ on $X_{R}$ by setting $f_{y}(\Gamma_{R}r) = f(\varphi_{y}(\Gamma_{R}r)) = f(\Gamma r\y)$. We may now summarize the above discussion with the following.
\begin{prop}
Let $H$ be a connected, simply connected real Lie group, $\Gamma\subset H$ a lattice, and $u\in H$ an Ad-unipotent element, such that the $u$-orbit of the identity coset $x = \Gamma e\in X_{H}$ is uniformly distributed with respect to $\nu_{H}$. Let $H = L\rtimes R$ be the real Levi decomposition of $H$. Then the following holds for all but finitely many pairs of primes $p\neq q$: for any ergodic joining $\lambda$ of the systems $(X_{H}, u^{p}, \nu_{H})$ and $(X_{H}, u^{q}, \nu_{H})$ which is supported on the orbit closure $\overline{\{(x,x)(u^{pn},u^{qn})\}_{n\in\mathbb{N}}}$, there exist a probability space $(Z,P)$ and a family $\left\{\rho^{C}_{(y,z)} : (y,z)\in X_{L}\times X_{L}, C\in Z\right\}$ of Borel probability measures on $X_{R}\times X_{R}$ such that for any continuous, bounded function $f$ on $\Gamma\backslash H$, we have
\[
\int_{X_{H}\times X_{H}}f\otimes f\,\textrm{d}\lambda = \int_{Z}\int_{X_{L}\times X_{L}}\int_{X_{R}\times X_{R}}f_{y}\otimes f_{z} \,\textrm{d}\rho^{C}_{(y,z)}\,\textrm{d}(\nu_{L}\times \nu_{L})(y,z)\,\textrm{d}P(C).
\]
Moreover, for $(\nu_{L}\times\nu_{L})$-a.e. $(y,z)\in X_{L}\times X_{L}$ and $P$-a.e. $C\in Z, \, \rho^{C}_{(y,z)}$ is ergodic for the action of $(\y u_{R}^{p}\y^{-1},\z u_{R}^{q}\z^{-1})$ on $X_{R}\times X_{R}$.
\label{thm: reduction} 
\end{prop}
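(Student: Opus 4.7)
My plan is to assemble the constructions developed earlier in this section. First, I would apply Theorem \ref{thm: character} to the semisimple quotient $L$: combined with Proposition \ref{prop: projection}, this forces $(\pi \times \pi)_{*}\lambda$ to coincide with $\nu_L \times \nu_L$ for all but finitely many pairs of primes, which is the sole source of the exceptional set of primes in the statement. Granting this, I would disintegrate $\lambda$ over $\nu_L \times \nu_L$ as in \eqref{eq: disintegration}, obtaining fiber measures $\lambda_{(y,z)}$ supported on $\mathcal{F}_y \times \mathcal{F}_z$.

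Next, Proposition \ref{prop: levi parts} places $(u_R^p, u_R^q)$ in $\stab_{H \times H}(\lambda)$, and since $R$ acts trivially on $X_L$ this invariance descends to the fibers: $\lambda_{(y,z)}$ is $(u_R^p, u_R^q)$-invariant for $(\nu_L \times \nu_L)$-almost every $(y,z)$. To produce the ergodic family $\rho^C_{(y,z)}$ demanded by the conclusion, I would need to decompose each $\lambda_{(y,z)}$ further into $(u_R^p, u_R^q)$-ergodic pieces. The main obstacle here is logical rather than technical: the measurable partition underlying such a decomposition could a priori depend on $(y,z)$, which would prevent me from organizing everything into a single Fubini-style formula.

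To dissolve this obstacle, I would invoke Ratner's measure classification theorem to identify $(X_H \times X_H, \lambda)$ with the homogeneous space $(X_\Lambda, \nu_\Lambda)$, where $\Lambda = \stab_{H \times H}(\lambda)$, and then apply Proposition \ref{prop: ergodic components}. This yields a single closed, connected, normal subgroup $N \subset \Lambda$ containing $(u_R^p, u_R^q)$ and intersecting $\Gamma \times \Gamma$ in a lattice, whose orbits form the ergodic atoms of the $(u_R^p, u_R^q)$-action on $(X_H \times X_H, \lambda)$. Since $N \subset R \times R$ and $R$ projects trivially to $X_L$, every $N$-orbit lies inside a single fiber $\mathcal{F}_y \times \mathcal{F}_z$; consequently the resulting partition $\xi$ is fiber-preserving and, crucially, independent of $(y,z)$. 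Taking $(Z,P)$ to be $(X_H \times X_H)/\xi$ with its induced probability measure, and letting $\lambda^C_{(y,z)}$ denote the corresponding ergodic fiber components, produces the decomposition \eqref{eq: decomposition}.

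Finally, I would transfer everything to $X_R \times X_R$ via the Borel trivializations $\varphi_y$, $\varphi_z$, setting $\rho^C_{(y,z)} := (\varphi_y^{-1} \times \varphi_z^{-1})_{*}\lambda^C_{(y,z)}$. The computation \eqref{eq: fiber action} shows that under this identification the $(u_R^p, u_R^q)$-action on the fiber corresponds to the $(\y u_R^p \y^{-1}, \z u_R^q \z^{-1})$-action on $X_R \times X_R$, so ergodicity transfers from $\lambda^C_{(y,z)}$ to $\rho^C_{(y,z)}$. Rewriting $f \otimes f$ in fiber coordinates as $f_y \otimes f_z$ and unraveling the disintegrations then yields the claimed identity. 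The only non-bookkeeping ingredient in the whole argument is the $(y,z)$-independence of $\xi$, which comes down to the containment $N \subset R \times R$.
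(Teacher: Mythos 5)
Your argument follows the paper's proof essentially step for step: Theorem \ref{thm: character} applied to the semisimple quotient gives $(\pi\times\pi)_*\lambda = \nu_L\times\nu_L$ for all but finitely many $(p,q)$; Proposition \ref{prop: levi parts} together with Proposition \ref{prop: ergodic components} (via Ratner's measure classification of $\lambda$) produce the $(y,z)$-independent measurable partition into $N$-orbits with $N\subset R\times R$; and the trivializations $\varphi_y\times\varphi_z$ transfer everything to $X_R\times X_R$, carrying ergodicity along by \eqref{eq: fiber action}. The one small attribution slip is citing Proposition \ref{prop: projection} as part of the argument that $(\pi\times\pi)_*\lambda = \nu_L\times\nu_L$ --- in fact that proposition takes this equality as its hypothesis and uses it to conclude $p(\Lambda_1)=p(\Lambda_2)=L$, which is a link in the chain leading to Proposition \ref{prop: levi parts} rather than a contributor to the equality itself.
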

 Our interest in this theorem arises from (\ref{eq: nongeneric correlations}), which tells us that the left side of the equation in the theorem governs the behavior of correlations of $u^{p}$ and $u^{q}$, and thereby is connected to M\"{o}bius disjointness by Theorem \ref{thm: criterion}. We now show how this works in practice, beginning with the case that $R$ is abelian.

\subsection{The case of abelian $R$} We apply the formula of Proposition \ref{thm: reduction}:
\begin{equation}
\int_{X_{H}\times X_{H}}f\otimes f\,\textrm{d}\lambda = \int_{Z}\int_{X_{L}\times X_{L}}\int_{X_{R}\times X_{R}}f_{y}\otimes f_{z} \,\textrm{d}\rho^{C}_{(y,z)}\,\textrm{d}(\nu_{L}\times \nu_{L})(y,z)\,\textrm{d}P(C).
\label{eq: full}
\end{equation}
For each $y\in X_{L},\, f_{y}$ is a continuous function on the compact abelian group $X_{R}$, and therefore admits a Fourier expansion
\[
f_{y}(u) = \sum_{\chi\in\widehat{X}_{R}}\widehat{f_{y}}(\chi)\chi(u).
\]
We assume furthermore that $f\in C^{1}(X_{H})$, so that $f_{y}\in C^{1}(X_{R})$ and this series converges pointwise. This causes no loss of generality for our proof of M\"{o}bius disjointness, as we will show below.

Since $\rho^{C}_{(y,z)}$ is a probability measure, the bounded convergence theorem now allows us to exchange the order of sums and integrals for the expression inside the integral over $Z$:
\begin{align}
&\int_{X_{L}\times X_{L}}\int_{X_{R}\times X_{R}}f_{y}\otimes f_{z} \,\textrm{d}\rho^{C}_{(y,z)}\,\textrm{d}(\nu_{L}\times \nu_{L})(y,z) \nonumber\\
&\hspace{0.5cm} = \int_{X_{L}\times X_{L}}\int_{X_{R}\times X_{R}}\sum_{\chi,\psi\in\widehat{X}_{R}}\widehat{f_{y}}(\chi)\widehat{f_{z}}(\psi)\chi\otimes\psi\,\textrm{d}\rho^{C}_{(y,z)}\textrm{d}(\nu_{L}\times \nu_{L})(y,z) \nonumber\\
& \hspace{0.5cm}= \int_{X_{L}\times X_{L}}\sum_{\chi,\psi\in\widehat{X}_{R}}\widehat{f_{y}}(\chi)\widehat{f_{z}}(\psi)\int_{X_{R}\times X_{R}}\chi\otimes\psi\,\textrm{d}\rho^{C}_{(y,z)}\,\textrm{d}(\nu_{L}\times \nu_{L})(y,z)
\label{eq: bounded convergence}
\end{align}
We are not yet justified to move the sum over $\chi,\psi$ further past the integral over $X_{L}\times X_{L}$. In order to do this, observe that since $R$ is simply connected and abelian, we may identify $\widehat{X_{R}}$ with $\mathbb{Z}^{k}$, where $k = \dim{R}$. Let
\[
\mathbb{Z}^{k}_{N} = \{m\in\mathbb{Z}^{k} : |m|\leq N\},
\]
where $|(m_{1},\dots,m_{k})| = |m_{1}|+\cdots+|m_{k}|$. Identifying each character $\chi\in\widehat{X_{R}}$ with the corresponding $m\in\mathbb{Z}^{k}$, we may unambiguously consider $\widehat{f_{y}}(m)$, and we claim that the function on $X_{L}\times X_{L}$ given by
\[
(y,z)\mapsto \sum_{\chi,\psi\in\widehat{X}_{R}}\widehat{f_{y}}(\chi)\widehat{f_{z}}(\psi)\int_{X_{R}\times X_{R}}\chi\otimes\psi\,\textrm{d}\rho^{C}_{(y,z)},
\] 
which can equivalently be written as
\[
(y,z)\mapsto \sum_{m,m'\in\mathbb{Z}^{k}}\widehat{f_{y}}(m)\widehat{f_{z}}(m')\int_{X_{R}\times X_{R}}\chi_{m}\otimes\chi_{m'}\,\textrm{d}\rho^{C}_{(y,z)},
\]
is the pointwise limit of the sequence of functions defined for $N\geq 1$ by
\[
(y,z)\mapsto \sum_{m,m'\in\mathbb{Z}^{k}_{N}}\widehat{f_{y}}(m)\widehat{f_{z}}(m')\int_{X_{R}\times X_{R}}\chi_{m}\otimes\chi_{m'}\,\textrm{d}\rho^{C}_{(y,z)}.
\]
Indeed, if $y,z$ are fixed, then the difference between the full sum over $m,m'$ and its $N$-truncated version is
\[
\sum_{\substack{m,m'\in\mathbb{Z}^{k} \\ |m|,|m'| >N}}\widehat{f_{y}}(m)\widehat{f_{z}}(m')\int_{X_{R}\times X_{R}}\chi_{m}\otimes\chi_{m'}\,\textrm{d}\rho^{C}_{(y,z)}.
\]
We claim that this sum converges absolutely, and that its absolute value goes to 0 as $N\to\infty$. Notice that $|\int_{X_{R}\times X_{R}}\chi_{m}\otimes\chi_{m'}\,\textrm{d}\rho^{C}_{(y,z)}| \leq 1$, because $\chi_{m},\chi_{m'}$ are characters and $\rho^{C}_{(y,z)}$ is a probability measure. Thus, the absolute value of this expression is bounded by
\[
 \sum_{\substack{m,m'\in\mathbb{Z}^{k} \\ |m|,|m'| >N}}\left|\widehat{f_{y}}(m)\widehat{f_{z}}(m')\right|,
\]
assuming this expression converges. 

We have already assumed above that $f\in C^{1}(X_{H})$; we now go further and assume that $f\in C^{2}(X_{H})$, which will be justified in a similar fashion below. We then have the standard decay of Fourier coefficients:
\[
\left|\widehat{f_{y}}(m)\right|\leq\frac{M(f_{y})}{(1+ |m|)^{2}},
\]
where $M(f_{y})>0$ depends only on $f_{y}$, and similarly for $\widehat{f_{z}}(m')$. Therefore, the expression of interest is bounded by
\[
M(f_{y})M(f_{z})\sum_{\substack{m,m'\in\mathbb{Z}^{k} \\ |m|,|m'| >N}}\frac{1}{(1+|m|)^{2}}\cdot\frac{1}{(1+|m'|)^{2}}.
\]
This clearly converges to 0 as $N\to\infty$, because the corresponding sum over all $m,m'\in\mathbb{Z}^{k}$ converges.

This yields the desired pointwise convergence, and thus we may again apply the bounded convergence theorem to conclude in all that
\begin{align}
& \int_{X_{L}\times X_{L}}\sum_{\chi,\psi\in\widehat{X}_{R}}\widehat{f_{y}}(\chi)\widehat{f_{z}}(\psi)\int_{X_{R}\times X_{R}}\chi\otimes\psi\,\textrm{d}\rho^{C}_{(y,z)}\,\textrm{d}(\nu_{L}\times \nu_{L})(y,z) \nonumber\\
& \hspace{0.5cm} = \sum_{\chi,\psi\in\widehat{X}_{R}}\int_{X_{L}\times X_{L}}\widehat{f_{y}}(\chi)\widehat{f_{z}}(\psi)\int_{X_{R}\times X_{R}}\chi\otimes\psi\,\textrm{d}\rho^{C}_{(y,z)}\,\textrm{d}(\nu_{L}\times \nu_{L})(y,z).
\label{eq: integrate characters}
\end{align}

By Proposition \ref{thm: reduction}, $\rho^{C}_{(y,z)}$ is ergodic for a unipotent translation on $X_{R}\times X_{R}$, and therefore is algebraic by Ratner's measure classification theorem, meaning it is supported on an orbit of $\stab_{(R\times R)}\,\rho^{C}_{(y,z)} := S^{C}_{(y,z)}$. Moreover, the point $(r_{C},s_{C})$ on whose $S^{C}_{(y,z)}$ orbit the measure $\rho^{C}_{(y,z)}$ is supported does not depend on $(y,z)$. This is because, following our discussion above, the ergodic decomposition of almost every $\lambda_{(y,z)}$ for the action of $(u_{R}^{p},u_{R}^{q})$ on $X_{H}\times X_{H}$ equals the partition into $N$-orbits, where $N$ is determined solely by $\lambda$. Thus, the points $(r_{C},s_{C})$ parametrize these $N$-orbits in the fibers of $\pi_{L}\times\pi_{L}$, which we have identified with $X_{R}\times X_{R}$. As we have $N\subset R\times R$, the points $(r_{C},s_{C})\in X_{R}\times X_{R}$ are determined by the partition of $X_{R}\times X_{R}$ into $N$-orbits (each $C$ is an $N$-orbit, and we have $(r_{C},s_{C})\in C$), with no dependence on $(y,z)$.  

 %Thus, there exists a point $(s_{y},s_{z})\in X_{R}\times X_{R}$ such that $\rho_{(y,z)}((s_{y},s_{z})S_{(y,z)}) = 1$. 
 
Consequently, we have
\[
\int_{X_{R}\times X_{R}}\chi(r)\psi(s)\,\textrm{d}\rho^{C}_{(y,z)} = \begin{cases}
\chi(r_{C})\psi(s_{C}) & \text{ if } (\chi\otimes\psi)|_{S^{C}_{(y,z)}} \text{ is trivial} \\
0 & \text{ else}. 
\end{cases}
\]
%Combining this with (\ref{eq: integrate characters}), we find
%\[
%\int_{X_{H}\times X_{H}}f\otimes f\,\textrm{d}\lambda = \sum_{\chi,\psi\in\widehat{X}_{R}}\int_{X_{L}\times X_{L}}\widehat{f_{y}}(\chi)\widehat{f_{z}}(\psi)\delta_{(y,z)}(\chi,\psi)\,\textrm{d}(\nu_{L}\times\nu_{L})(y,z),
%\]
%where $\delta_{(y,z)}(\chi,\psi) = 0$ if $\chi\otimes\psi$ is nontrivial on $S_{(y,z)}$, and equals 1 otherwise. 

Since $S^{C}_{(y,z)}$ contains $(\y u_{R}^{p}\y^{-1},\z u_{R}^{q}\z^{-1})$, and since $\rho^{C}_{(y,z)}$ is ergodic for the action thereof, we see that 
\[
\int_{X_{R}\times X_{R}}\chi\otimes\psi\,\textrm{d}\rho^{C}_{(y,z)} = \begin{cases}
\chi(r_{C})\psi(s_{C}) & \text{ if } \chi(\y u_{R}^{p}\y^{-1})\psi(\z u_{R}^{q}\z^{-1}) = 1 \\
0 & \text { if not}.
\end{cases}
\]
Therefore, (\ref{eq: full}) and (\ref{eq: integrate characters}) together yield the following formula:
\begin{equation}
\int_{X_{H}\times X_{H}}f\otimes f\,\textrm{d}\lambda = \int_{Z}\sum_{\chi,\psi\in\widehat{X}_{R}}\chi(r_{C})\psi(s_{C})\int_{X_{L}^{p,q}(\psi,\chi)}\widehat{f}_{y}(\chi)\widehat{f}_{z}(\psi)\,\textrm{d}(\nu_{L}\times\nu_{L})(y,z)\,\textrm{d}P(C)
\label{eq: equality}
\end{equation}
where
\[
X_{L}^{p,q}(\chi,\psi) = \left\{(y,z)\in X_{L}\times X_{L} : \chi(\y u_{R}^{p}\y^{-1})\psi(\z u_{R}^{q}\z^{-1})=1\right\}. 
\]

Now, introduce the set
\[
L^{p,q}(\psi,\chi) = \left\{(\ell_{1},\ell_{2})\in L\times L : \chi(\ell_{1}u_{R}^{p}\ell_{1}^{-1})\psi(\ell_{2}u_{R}^{q}\ell_{2}^{-1})=1\right\}.
\]
Observe that $L^{p,q}(\psi,\chi)$ is an analytic subvariety of $L\times L$, since the group operations in $H$ and the characters $\psi,\chi$ on $R$ are analytic. However, any analytic subvariety of $L\times L$ which is not equal to $L\times L$ has Haar measure zero (it is well-known that, in any system of local coordinates, the Haar measure is absolutely continuous with respect to the Lebesgue measure). Since $X_{L}^{p,q}(\psi,\chi)\subseteq (\pi\times\pi)(L^{p,q}(\psi,\chi))$, where $\pi: L\to X_{L}$ is the projection, we see that we have the following dichotomy: either
\begin{itemize}
\item $L^{p,q}(\psi,\chi) = L\times L$, so $X_{L}^{p,q}(\psi,\chi) = X_{L}\times X_{L}$ and $(\nu_{L}\times\nu_{L})(X_{L}^{p,q}(\psi,\chi)) = 1$, or\\
  
\item $L^{p,q}(\psi,\chi) \neq L\times L$, and then $(\nu_{L}\times \nu_{L})(X_{L}^{p,q}(\psi,\chi)) = 0$.
\end{itemize}
Consequently, (\ref{eq: equality}) becomes
\begin{equation}
\int_{X_{H}\times X_{H}}f\otimes f\,\textrm{d}\lambda = \int_{Z}\sum_{\substack{\chi,\psi \\ L^{p,q}(\psi,\chi) = L\times L}}\widehat{f}(\chi)\widehat{f}(\psi)\chi(r_{C})\psi(s_{C})\,\textrm{d}P(C)
\label{eq: almost there}
\end{equation}
where
\[
\widehat{f}(\chi) = \int_{X_{L}}\widehat{f}_{y}(\chi)\,\textrm{d}y.
\]

Observe, however, that if $L^{p,q}(\psi,\chi) = L\times L$ then we have in particular $\chi(u_{R}^{p})\psi(u_{R}^{q}) = 1$, which implies by the ergodicity of $u_{R}$ acting on $X_{R}$ that $\chi^{p} = \psi^{-q}$. As this will turn out to be the central point of our argument below, we write (\ref{eq: almost there}) as
\begin{equation}
\int_{X_{H}\times X_{H}}f\otimes f\,\textrm{d}\lambda = \int_{Z}\sideset{}{'}\sum_{\substack{\chi,\psi \\ \chi^{p} = \psi^{-q}}}\widehat{f}(\chi)\widehat{f}(\psi)\chi(r_{C})\psi(s_{C})\,\textrm{d}P(C),
\label{eq: calculate joinings}
\end{equation}
where the primed summation is restricted to those pairs of characters such that $L^{p,q}(\psi,\chi) = L\times L$.

We are now in a position to prove M\"{o}bius disjointness in the abelian case, yielding the base of induction on the step of the nilradical of $H$.

%As we will now show, this equation allows us to prove M\"{o}bius disjointness by altering $f$ in such a way that the quantity on the right side vanishes for almost all pairs of primes $p\neq q$. 

%\begin{align*}
%& \int_{X_{L}\times X_{L}}\widehat{f_{y}}(\chi)\widehat{f_{z}}(\psi)\,\textrm{d}(\nu_{L}\times\nu_{L})(y,z) \\
%& \hspace{0.5cm} = \left(\int_{X_{L}}\widehat{f_{y}}(\chi)\,\textrm{d}\nu_{L}(y)\right)\cdot\left(\int_{X_{L}}\widehat{f_{z}}(\psi)\,\textrm{d}\nu_{L}(z)\right), 
%\end{align*}
%it follows that 
%\begin{equation}
%\text{If } \int_{X_{L}}\widehat{f_{y}}(\chi)\,\textrm{d}\nu_{L}(y) = 0 \,\forall \chi\in \widehat{X}_{R}, \text{ then } \int_{X_{H}\times X_{H}}f\otimes f\,\textrm{d}\lambda = 0. 
%\label{eq: summary}
%\end{equation}
%The essential point is that the condition guaranteeing the vanishing of the second integral does not depend on $\lambda, p$ or $q$, which will allow us to reduce the proof of M\"{o}bius disjointness to the case when this condition is satisfied, while respecting the order of logic.

\begin{proof}[Proof of Theorem \ref{thm: main} for the case of abelian $R$.] \label{abelian proof} First, we may suppose without loss of generality that $\int_{X_{H}}f\,\textrm{d}\nu_{H} = 0$. Indeed, for any constant $c$ we have
\[
\sum_{n\leq N}\mu(n)f(xu^{n}) = \sum_{n\leq N}\mu(n)(f(xu^{n}) - c) + c\sum_{n\leq N}\mu(n) = \sum_{n\leq N}\mu(n)(f(xu^{n}) - c) + o(N)
\]
by the prime number theorem. Since $x\mapsto f(x) - c$ remains a continuous, bounded function on $X_{H}$, all of our work above applies to this function in place of $f$, with $c = \int_{X_{H}}f\,\textrm{d}\nu_{H}$. 

Furthermore, we may assume that $f$ belongs to $C^{1}(X_{H})$. Indeed, if $f$ is an arbitrary continuous function, and $f_{k}$ a sequence of $C^{1}$ functions which converge to $f$ uniformly, then for any $\epsilon > 0$ we have for $N\geq 1$ and all sufficiently large $k$
\[
\frac{1}{N}\sum_{n\leq N}\mu(n)f(xu^{n}) = \frac{1}{N}\sum_{n\leq N}\mu(n)f_{k}(xu^{n}) + O(\epsilon).
\]
If M\"{o}bius disjointness is known for $C^{1}$ functions, we may choose $k$ so large that the first term above is smaller than $\epsilon$ as well, making it clear that limit of the expression on the left as $N\to\infty$ must be zero.

Now, notice that 
\begin{align*}
\widehat{f}(\chi_{0}) & = \int_{X_{L}}\widehat{f}_{y}(\chi_{0})\,\textrm{d}y \\
& =  \int_{X_{L}}\int_{X_{R}}f(\Gamma r\y)\,\textrm{d}r\,\textrm{d}y \\
& = \int_{X_{H}}f\,\textrm{d}\nu_{H} \\
& = 0.
\end{align*}
Therefore, the expression on the right side of (\ref{eq: calculate joinings}) becomes
\[
 \int_{Z}\sideset{}{'}\sum_{\substack{\chi,\psi \\ \chi^{p} = \psi^{-q}}}\widehat{f}(\chi)\widehat{f}(\psi)\chi(r_{C})\psi(s_{C})\,\textrm{d}P(C) =  \int_{Z}\sideset{}{'}\sum_{\substack{\chi,\psi\neq\chi_{0} \\ \chi^{p} = \psi^{-q}}}\widehat{f}(\chi)\widehat{f}(\psi)\chi(r_{C})\psi(s_{C})\,\textrm{d}P(C).
\]
Since $R$ is simply connected and abelian, it is isomorphic to $\mathbb{R}^{k}$, where $k = \dim{R}$. Therefore $X_{R}$ is isomorphic to the torus $\mathbb{T}^{k}$, and $\widehat{X_{R}}$ may be identified with $\mathbb{Z}^{k}$, from which it is easily seen that
\[
\int_{Z} \sideset{}{'}\sum_{\substack{\chi,\psi\neq\chi_{0} \\ \chi^{p} = \psi^{-q}}}\widehat{f}(\chi)\widehat{f}(\psi)\chi(r_{C})\psi(s_{C})\,\textrm{d}P(C) = \int_{Z}\sideset{}{'}\sum_{\chi\neq\chi_{0}}\widehat{f}(\chi^{-q})\widehat{f}(\chi^{p})\chi^{-q}(r_{C})\chi^{p}(s_{C})\,\textrm{d}P(C).
\]
We now write out the integral over $X_{L}$ in the definition of $\widehat{f}(\chi)$ in order to estimate these coefficients by averaging over the restrictions of $f$ to fibers, which gives in all
\begin{equation}
\int_{X_{H}\times X_{H}}f\otimes f\,\textrm{d}\lambda = \int_{Z}\int_{X_{L}\times X_{L}} \sideset{}{'}\sum_{\chi\neq\chi_{0}}\chi^{-q}(r_{C})\chi^{p}(s_{C})\widehat{f}_{y}(\chi^{-q})\widehat{f}_{z}(\chi^{p})\,\textrm{d}y\,\textrm{d}z\,\textrm{d}P(C).
\label{eq: exchange}
\end{equation}
%Now, we have supposed that $f$ belongs to $C^{2}(X_{H})$. This implies that each $f_{y}$ belongs to $C_{2}(X_{R})$, and thus we may use the standard decay of Fourier coefficients: for each $m = (m_{1},\dots,m_{k})\in \mathbb{Z}^{k}$ (now identifying $\widehat{X_{R}}$ with $\mathbb{Z}^{k}$, so that $\widehat{f}_{y}(m)$ makes sense), we have
%\[
%|\widehat{f}_{y}(m)| \leq c_{k}\frac{\max(\|f_{y}\|_{L^{1}},\max_{i=1,\dots, k}\|\frac{\partial}{\partial x_{i}}f_{y}\|_{L^{1}})}{(1 + |m|)^{2}},
%\]
%where $c_{k} > 0$ depends only on $k$, and $(x_{1},\dots,x_{k})$ are the coordinates on $\mathbb{T}^{k}$.
%
%Observe that we have $\left|\int_{Z}\chi(r_{C})\psi(s_{C})\,\textrm{d}P(C)\right|\leq 1$ because $\chi$ and $\psi$ are characters and $P$ is a probability measure. Thus, if we take absolute values on both sides of this equation, the triangle inequality yields
%\[
%\sideset{}{'}\sum_{\chi\neq\chi_{0}}\widehat{f}(\chi^{-q})\widehat{f}(\chi^{p})\int_{Z}\chi(r_{C})\psi(s_{C})\,\textrm{d}P(C) \leq \int_{X_{L}\times X_{L}}
%\]

For each $y\in X_{L}$, the functions $\chi\mapsto \widehat{f}_{y}(\chi^{-q})$ and $\chi\mapsto\widehat{f}_{y}(\chi^{p})$ belong to $\ell^{2}(\widehat{X}_{R})$ by Plancherel's theorem. The same is then true of the functions $\chi\mapsto \widehat{f}_{y}(\chi^{-q})\chi^{-q}(r_{C})$ and $\chi\mapsto\widehat{f}_{y}(\chi^{p})\chi^{p}(s_{C})$ since $|\chi| \equiv 1$. Thus, for each $y,z\in X_{L}$ and $C\in Z$, we have by the Cauchy-Schwarz inequality
\[
\left|\sideset{}{'}\sum_{\chi\neq\chi_{0}}\chi^{-q}(r_{C})\chi^{p}(s_{C})\widehat{f}_{y}(\chi^{-q})\widehat{f}_{z}(\chi^{p})\right|^{2}\leq \sideset{}{'}\sum_{\chi\neq\chi_{0}}\left|\widehat{f}_{y}(\chi^{-q})\right|^{2}\sideset{}{'}\sum_{\chi\neq\chi_{0}}\left|\widehat{f}_{z}(\chi^{p})\right|^{2},
\]
and the sums on the right side both converge. Note that this estimate does not depend on $C$. Hence, on combining this with (\ref{eq: exchange}) and using the triangle inequality to eliminate the integral over $Z$, we get the estimate 
\begin{equation}
\left|\int_{X_{H}\times X_{H}}f\otimes f\,\textrm{d}\lambda\right|\leq \int_{X_{L}\times X_{L}}\left(\sum_{\chi\neq\chi_{0}}\left|\widehat{f}_{y}(\chi^{-q})\right|^{2}\sum_{\chi\neq\chi_{0}}\left|\widehat{f}_{z}(\chi^{p})\right|^{2}\right)^{1/2}\,\textrm{d}y\,\textrm{d}z,
\label{eq: joining inequality}
\end{equation}
where the positivity allows us to remove the prime on the summation.

We now use the standard decay of Fourier coefficients for $C^{1}$ functions. According to this, since each $f_{y}$ is differentiable on $X_{R}$, we have for each $m = (m_{1},\dots,m_{k})\in \mathbb{Z}^{k}$ (now identifying $\widehat{X_{R}}$ with $\mathbb{Z}^{k}$, so that $\widehat{f}_{y}(m)$ makes sense)
\[
|\widehat{f}_{y}(m)| \leq c_{k}\frac{\max(\|f_{y}\|_{L^{1}},\max_{i=1,\dots, k}\|\frac{\partial}{\partial x_{i}}f_{y}\|_{L^{1}})}{1 + |m|},
\]
where $c_{k} > 0$ depends only on $k$, and $(x_{1},\dots,x_{k})$ are the coordinates on $\mathbb{T}^{k}$. Let $M(f_{y}) = \max(\|f_{y}\|_{L^{1}},\max_{i=1,\dots, k}\|\frac{\partial}{\partial x_{i}}f_{y}\|_{L^{1}})$. Then we see
\begin{align*}
\sum_{m\neq 0}\left|\widehat{f}_{y}(qm)\right|^{2} & \leq c_{k}^{2}M(f_{y})^{2}\sum_{m\neq 0}\frac{1}{(1 + |qm|)^{2}} \\
& = \frac{c_{k}^{2}M(f_{y})^{2}}{q^{2}}\sum_{m\neq 0}\frac{1}{(q^{-1} + |m|)^{2}} \\
& \leq C_{k}M(f_{y})^{2}{q^{-2}},
\end{align*}
where $C_{k}$ is $c_{k}^{2}$ multiplied by the value of the convergent sum $\sum_{m\neq 0\in \mathbb{Z}^{k}}|m|^{-2}$. 

Plugging this inequality into (\ref{eq: joining inequality}), we find
\begin{equation}
\left|\int_{X_{H}\times X_{H}}f\otimes f\,\textrm{d}\lambda\right| \leq \frac{C_{k}^{2}}{pq}\int_{X_{L}\times X_{L}}M(f_{y})M(f_{z})\,\textrm{d}y\,\textrm{d}z.
\label{eq: almost}
\end{equation}
Our only remaining obstacle is to show that the integral on the right side of this expression converges. Observe that 
\[
\|f_{y}\|_{L^{1}} = \int_{X_{R}}|f(\Gamma r\y)|\,\textrm{d}\nu_{R}\leq \|f\|_{\infty},
\]
so $M(f_{y})\leq \max(\|f\|_{\infty},\max_{i=1,\dots, k}\|\frac{\partial}{\partial x_{i}}f_{y}\|_{L^{1}(X_{R})})$. At the same time, since $\pi: X_{H}\to X_{L}$ is a surjective submersion, we may introduce local coordinates on $X_{H}$ around $x$ in such a way that the coordinates of $x$ are $(x_{1},\dots,x_{k},y_{1},\dots,y_{s})$, where $(x_{1},\dots,x_{k})$ are coordinates on $\mathbb{T}^{k}$ and $(y_{1},\dots, y_{s})$ are the coordinates of $y = \pi(x)\in X_{L}$. In this system of local coordinates, we have
\[
\frac{\partial}{\partial x_{i}}f_{y}(\Gamma_{R}r) = \frac{\partial}{\partial x_{i}}f(x),
\]
where $x = \Gamma r\y$. Since $f$ is assumed to be differentiable, we have $(\partial/\partial x_{i})f\in L^{1}(X_{H})$; thus, we find
\begin{align*}
\int_{X_{L}}\left\|\frac{\partial}{\partial x_{i}}f_{y}\right\|_{L^{1}(X_{R})}\,\textrm{d}y & = \int_{X_{L}}\int_{X_{R}}\left|\frac{\partial}{\partial x_{i}}f_{y}(\Gamma_{R}r)\right|\,\textrm{d}\nu_{R}\,\textrm{d}y  \\
& = \int_{X_{H}}\left|\frac{\partial}{\partial x_{i}}f(x)\right|\,\textrm{d}x \\
& = \left\|\frac{\partial}{\partial x_{i}}f\right\|_{L^{1}(X_{H})}.
\end{align*} 
Consequently, we have
\begin{align*}
\int_{X_{L}}M(f_{y})\,\textrm{d}y & \leq \max\left(\|f\|_{\infty},\max_{i=1,\dots, k}\int_{X_{L}}\left\|\frac{\partial}{\partial x_{i}}f_{y}\right\|_{L^{1}(X_{R})}\,\textrm{d}y\right) \\
& =  \max\left(\|f\|_{\infty},\max_{i=1,\dots, k}\left\|\frac{\partial}{\partial x_{i}}f\right\|_{L^{1}(X_{H})}\right) <\infty.
\end{align*}
Thus, (\ref{eq: almost}) finally yields
\begin{equation}
\left|\int_{X_{H}\times X_{H}}f\otimes f\,\textrm{d}\lambda\right| \leq \frac{1}{pq}C_{f},
\label{eq: final}
\end{equation}
where $C_{f}>0$ only depends on $f$, not on $p$ or $q$. Putting things together, we see that if $\tau > 0$ and $p,q > (\tau C_{f})^{-1/2}$, then
\[
\left|\int_{X_{H}\times X_{H}}f\otimes f\,\textrm{d}\lambda\right| < \tau. 
\]
Since $(\tau C_{f})^{-1/2} = o(e^{1/\tau})$, the proof of Theorem \ref{thm: criterion} \cite{BSZ} therefore implies that 
\[
\sum_{n\leq N}\mu(n)f(xu^{n}) = o(N),
\]
as desired.

\end{proof}

\subsection{The case of general $R$} We now prove Theorem \ref{thm: main} when $R$, the radical of $H$, is not necessarily abelian (recall that $R$ is nilpotent by Proposition \ref{prop: nilpotent}). Our argument proceeds by induction on $d$, the step of the nilpotent group $R$, with the base case $d=1$ carried out above. In order to mimic the proof in the abelian case, we will use harmonic analysis on the nilpotent group $R$, and specifically the notions of ``vertical Fourier analysis" (\cite{GT},\cite{Zo}). Let $R$ be a nilpotent group of nilpotency class $d$ (i.e. its descending central series has length $d+1$), and $\Gamma\subset R$ a lattice.  A continuous function $g$ on the nilmanifold $\Gamma\backslash R$ is called a vertical character if there exists a character $\chi$ of the compact abelian group $\Gamma\cap R_{d}\backslash R_{d} = \Gamma_{d}\backslash R_{d}$ such that 
\[
g(vr_{d}) = g(v)\chi(\Gamma_{d}r_{d}) \text{ for all } v\in \Gamma\backslash R, r_{d}\in R_{d}.
\]
Given any continuous function $g$ on $\Gamma\backslash R$ and character $\chi\in \widehat{\Gamma_{d}\backslash R_{d}}$, we define 
\[
g^{\chi}(v) := \int_{\Gamma_{d}\backslash R_{d}}g(vr_{d})\overline{\chi(r_{d})}\,\textrm{d}r_{d}. 
\]
Then $g^{\chi}$ is a vertical character of $\Gamma\backslash R$, with $\chi$ the associated character of $\Gamma_{d}\backslash R_{d}$, and we have the vertical Fourier series 
\[
g = \sum_{\chi\in\widehat{\Gamma_{d}\backslash R_{d}}}g^{\chi} \text{ in } L^{2}(\Gamma\backslash R),
\]
with pointwise convergence if e.g. $g$ is differentiable. 

We now apply this expansion in the setting of Theorem \ref{thm: criterion}, assuming $f\in C^{1}(X_{H})$. By (\ref{eq: nongeneric correlations}), we have for any $p\neq q$
\begin{equation}
\lim_{N\to\infty}\frac{1}{N}\sum_{n\leq N}f(xu^{pn})\overline{f(xu^{qn})} = \int_{X_{H}\times X_{H}}f\otimes \overline{f} \,\textrm{d}\lambda
\label{eq: reiterate}
\end{equation}
for an appropriate joining $\lambda$. We prove by induction on $d$ that there is a constant $C_{f} >0 $ depending only on $f$ such that
\[
\left|\int_{X_{H}\times X_{H}}f\otimes \overline{f} \,\textrm{d}\lambda\right|\leq\frac{1}{pq}C_{f}.
\]
By Proposition \ref{thm: reduction}, we have for all but finitely many $p\neq q$ that (using the notation therein)
\begin{align}
\int_{X_{H}\times X_{H}}f\otimes f\,\textrm{d}\lambda & = \int_{Z}\int_{X_{L}\times X_{L}}\int_{X_{R}\times X_{R}}f_{y}\otimes f_{z} \,\textrm{d}\rho^{C}_{(y,z)}\,\textrm{d}(\nu_{L}\times \nu_{L})(y,z)\,\textrm{d}P(C) \nonumber \\
& = \sum_{\chi,\psi\in\widehat{\Gamma_{d}\backslash R_{d}}}\int_{Z}\int_{X_{L}\times X_{L}}\int_{X_{R}\times X_{R}}f_{y}^{\chi}\otimes\overline{f_{z}^{\psi}}\,\textrm{d}\rho^{C}_{(y,z)}\,\textrm{d}(\nu_{L}\times\nu_{L})(y,z)\,\textrm{d}P(C),
%&\leq \sum_{\chi,\psi\in\widehat{\Gamma_{d}\backslash R_{d}}}\left|\int_{X_{L}\times X_{L}}\int_{X_{R}\times X_{R}}f_{y}^{\chi}\otimes\overline{f_{z}^{\psi}}\,\textrm{d}\rho_{(y,z)}\,\textrm{d}(\nu_{L}\times\nu_{L})(y,z)\right|,
\label{eq: expand}
\end{align}
where the exchange of sums and integrals is justified by the pointwise convergence of the vertical Fourier series and the bounded convergence theorem, since each $f_{y}$ is differentiable on $X_{R}$.

Now, the triviality of $\chi_{0}$ means that for each $y\in X_{L}$, the vertical character $f_{y}^{\chi_{0}}$ on $X_{R}$ is $R_{d}$-invariant; thus it passes to a continuous function on the nilmanifold $\Gamma\backslash R/ R_{d}$. The nilpotent group $R/R_{d}$ has strictly lower step than $R$, and thus we may apply the inductive hypothesis to assert that
\begin{equation}
\left|\int_{Z}\int_{X_{L}\times X_{L}}\int_{X_{R}\times X_{R}}f_{y}^{\chi_{0}}\otimes\overline{f_{z}^{\chi_{0}}}\,\textrm{d}\rho^{C}_{(y,z)}\,\textrm{d}(\nu_{L}\times\nu_{L})(y,z)\,\textrm{d}P(C)\right| \leq \frac{1}{pq}C_{f},
\label{eq: trivial}
\end{equation}
where $C_{f}$ depends only on $f$. 

It remains to deal with
\[
 \sideset{}{'}\sum_{\chi,\psi}\int_{Z}\int_{X_{L}\times X_{L}}\int_{X_{R}\times X_{R}}f_{y}^{\chi}\otimes \overline{f}_{z}^{\psi} \,\textrm{d}\rho^{C}_{(y,z)}\,\textrm{d}(\nu_{L}\times \nu_{L})(y,z)\,\textrm{d}P(C),
\]
where the primed summation runs over pairs of characters $\chi,\psi$, at least one of which is nontrivial.

Following \cite{Z}, we claim that for $\nu_{L}\times\nu_{L}$-a.e. $(y,z)\in X_{L}\times X_{L}$ and $P$-a.e. $C\in Z$, the support of $\rho^{C}_{(y,z)}$ is invariant under the action of $R_{d}\times R_{d}$. Indeed, by a theorem of Leon Green \cite{AGH}, if $G$ is a nilpotent group and $\Gamma\subset G$ a lattice, then a translation on $\Gamma\backslash G$ is ergodic relative to the Haar measure if and only if its projection to the horizontal torus $\Gamma\backslash G/[G,G]$ is ergodic. Letting $X_{R}^{\text{hor}}$ be the horizontal torus associated to $X_{R}$, it follows from the ergodicity of $\rho_{(y,z)}^{C}$ for $(\y u_{R}^{p}\y^{-1},\z u_{R}^{q}\z^{-1})$ that for $\nu_{L}\times\nu_{L}$-a.e. $(y,z)\in X_{L}\times X_{L}$ and $P$-a.e. $C\in Z$ we have
\[
\stab_{R\times R}\left(\rho_{(y,z)}^{C}\right) = \bigcap_{\substack{\omega,\xi\in \widehat{X_{R}^{\text{hor}}} \\ \omega(\y u_{R}^{p}\y^{-1})\xi(\z u_{R}^{q}\z^{-1}) = 1}}\ker(\omega\otimes\xi)
\]
(where we use the same symbol to denote a character of $X_{R}^{\text{hor}}$ and its pullback to $R$). 

Since $d >  1$, we have $R_{d}\subset [R,R]$. It follows that $R_{d}\times R_{d}\subset\ker(\omega\otimes\xi)$ for every $\omega,\xi\in \widehat{X_{R}^{\text{hor}}}$, which proves the desired $R_{d}\times R_{d}$-invariance.

Hence, for any $r_{d},s_{d}\in R_{d}\times R_{d}$ and any $\chi,\psi\in \widehat{\Gamma_{d}\backslash R_{d}}$, at least one of which is nontrivial, we have
\begin{align*}
\int_{X_{R}\times X_{R}}f_{y}^{\chi}\otimes \overline{f}_{z}^{\psi} \,\textrm{d}\rho^{C}_{(y,z)} & = \int_{X_{R}\times X_{R}}f_{y}^{\chi}(vr_{d})\overline{f}_{z}^{\psi}(ws_{d}) \,\textrm{d}\rho^{C}_{(y,z)}(v,w) \\
& = \chi(\Gamma_{d}r_{d})\psi(\Gamma_{d}s_{d})\int_{X_{R}\times X_{R}}f_{y}^{\chi}\otimes \overline{f}_{z}^{\psi} \,\textrm{d}\rho^{C}_{(y,z)},
\end{align*}
and it follows that
\[
\int_{X_{R}\times X_{R}}f_{y}^{\chi}\otimes \overline{f}_{z}^{\psi} \,\textrm{d}\rho^{C}_{(y,z)} = 0.
\]
Combining this with (\ref{eq: expand}), we see that 
\[
\int_{X_{H}\times X_{H}}f\otimes \overline{f}\,\textrm{d}\lambda = \int_{Z}\int_{X_{L}\times X_{L}}\int_{X_{R}\times X_{R}}f_{y}^{\chi_{0}}\otimes\overline{f_{z}^{\chi_{0}}}\,\textrm{d}\rho^{C}_{(y,z)}\,\textrm{d}(\nu_{L}\times\nu_{L})(y,z)\,\textrm{d}P(C),
\]
and thus, by (\ref{eq: trivial}),
\[
\left|\int_{X_{H}\times X_{H}}f\otimes \overline{f}\,\textrm{d}\lambda\right|\leq\frac{1}{pq}C_{f}.
\]
Theorem \ref{thm: criterion} now yields
\[
\sum_{n\leq N}\mu(n)f(xu^{n}) = o(N),
\]
proving Theorem \ref{thm: main}. \qed

\end{document}